\newcommand{\pl}[1]{\foreignlanguage{polish}{#1}}
\newcommand\reallywidehat[1]{%
\savestack{\tmpbox}{\stretchto{%
  \scaleto{%
    \scalerel*[\widthof{\ensuremath{#1}}]{\kern-.5pt\bigwedge\kern-.5pt}%
    {\rule[-\textheight/2]{1ex}{\textheight}}
  }{\textheight}%
}{0.9ex}}%
\stackon[.1pt]{#1}{\tmpbox}%
}
\theoremstyle{plain}
\newtheorem{theorem}{Theorem}
\newtheorem{proposition}{Proposition}[section]
\newtheorem{Corollary}[proposition]{Corollary}
\newtheorem{lemma}[proposition]{Lemma}
\theoremstyle{definition}
\newtheorem{remark}{Remark}[section]
\numberwithin{equation}{section}
\newcounter{thm}
\theoremstyle{plain}
\newcommand{\RR}{\mathbb{R}}
\newcommand{\ZZ}{\mathbb{Z}}
\newcommand{\TT}{\mathbb{T}}
\newcommand{\CC}{\mathbb{C}}
\newcommand{\NN}{\mathbb{N}}
\newcommand{\EE}{\mathbb{E}}
\newcommand{\calP}{\mathcal{P}}
\newcommand{\calF}{\mathcal{F}}
\newcommand{\ind}[1]{{\mathds{1}_{{#1}}}}
\newcommand{\dist}{\operatorname{dist}}
\newcommand{\dif}{\mathrm{d}}
\renewcommand{\atop}[2]{\substack{{#1}\\{#2}}}
\newcommand{\sprod}[2] {{#1 \cdot #2}}
\newcommand{\eps}{\epsilon}
\newcommand{\st}{\mathrm{sin}}
\title[Dimension-free estimates in $\ZZ^d$]
{Dimension-free estimates for discrete 
 Hardy--Littlewood averaging operators over the cubes in $\ZZ^d$ }
\author{Jean Bourgain}
\address{Jean Bourgain \\
  School of Mathematics\\
  Institute for Advanced Study\\
  Princeton, NJ 08540\\
  USA}
\email{bourgain@math.ias.edu}
\author{Mariusz Mirek}
\address{Mariusz Mirek \\
  Department of Mathematics\\
  Rutgers University\\
Piscataway, NJ 08854\\ USA \&
	Instytut Matematyczny\\
	Uniwersytet \pl{Wroc{\lll}awski}\\
	Plac Grun\-waldzki 2/4\\
	50-384 \pl{Wroc{\lll}aw}\\
	Poland}
\email{mariusz.mirek@rutgers.edu}
\author{Elias M. Stein}
\address{
	Elias M. Stein\\
	Department of Mathematics\\
	Princeton University\\
	Princeton\\
	NJ 08544-100 USA}
\email{stein@math.princeton.edu}
\author{B{\l}a{\.z}ej Wr{\'o}bel}
\address{ B{\l}a{\.z}ej Wr{\'o}bel\\
	Instytut Matematyczny\\
	Uniwersytet \pl{Wroc{\lll}awski}\\
	Plac Grun\-waldzki 2/4\\
	50-384 \pl{Wroc{\lll}aw}\\
	Poland}
\email{blazej.wrobel@math.uni.wroc.pl}
\thanks{ Jean Bourgain was partially supported by NSF grant DMS-1301619.\  Mariusz
  Mirek was partially  supported by the Schmidt Fellowship and the IAS Found for
  Math.\ and by the National Science Center (Poland), NCN grant  DEC-2015/19/B/ST1/01149.\
  Elias M. Stein was partially supported by NSF grant
  DMS-1265524.  B{\l}a{\.z}ej Wr{\'o}bel was partially supported by
  the National Science Centre (Poland), NCN grant 2014\slash 15\slash D\slash ST1\slash 00405}
\begin{document}
 
\selectlanguage{english}

\begin{abstract}
Dimension-free bounds will be provided in
maximal and $r$-variational inequalities on $\ell^p(\ZZ^d)$ corresponding to the discrete
Hardy--Littlewood averaging operators defined over the cubes in
$\ZZ^d$. We will also construct an example of a  symmetric convex body
in $\ZZ^d$ for which maximal dimension-free bounds fail on $\ell^p(\ZZ^d)$
for all $p\in(1, \infty)$. Finally, some applications in ergodic theory will
be discussed. 
\end{abstract}

\maketitle

\section{Introduction and notation}

\label{sec:1}
In the 1980s dimension-free estimates for the Hardy--Littlewood
maximal functions over symmetric convex bodies had begun to be studied
and gone through a period of considerable changes and
developments. This line of research was originated by the third author
in \cite{SteinMax}, see also
\cite{StStr}, where dimension-free bounds for the
Hardy--Littlewood maximal functions over the Euclidean balls were obtained on $L^p(\RR^d)$ for
$p \in(1, \infty]$.
Averages over general symmetric convex bodies were
considered  in \cite{B1, B2, Car1, Mul1}. We refer also to more
recent results \cite{Ald1, B3,  NaTa} and the survey article
\cite{DGM1} for a very careful and exhaustive exposition of the
subject. However, at that time the discrete analogues of these
dimension-free estimates had not been investigated, and only recently
has the dimension-free role of $r$-variations been broached \cite{BMSW1}.

In this article we initiate systematic studies of the estimates
independent of the dimension for the Hardy--Littlewood averaging
operators in the discrete setup. On the one hand, we give a
counterexample that shows that the phenomenon of dimension-free
estimates in the discrete setting cannot be as broad as in the
continuous setting. On the other hand, for  the discrete
Hardy--Littlewood averaging operators over the cubes in $\ZZ^d$ some
positive results will be proved here.
We will also discuss dimension-free $r$-variational
estimates and their  applications to ergodic theory.

Let $G$ be a bounded, closed and symmetric convex subset of $\RR^d$
with non-empty interior. Throughout the paper such a set $G$ will be
called a symmetric convex body. We remark that usually in the
literature a symmetric convex body $G$ is assumed to be open. In
fact, when averaging operators over convex sets in $\RR^d$ are
considered there is no difference whether we assume $G$ is closed or
open, since the boundary of a convex set has Lebesgue measure zero.
However, in the discrete case in order to avoid some
technicalities, we will assume that a symmetric
convex body $G$ is always closed.

 For every $x\in\ZZ^d$ and $t>0$ and for
every function $f\in\ell^1(\ZZ^d)$ let
\begin{align}
  \label{eq:0}
\mathcal M_t^Gf(x)=\frac{1}{|G_t\cap \ZZ^d|}\sum_{y\in G_t\cap\ZZ^d}f(x-y)
\end{align}  
be the discrete Hardy--Littlewood averaging operator over
$G_t\cap \ZZ^d$, where $G_t=\{y\in\RR^d: t^{-1}y\in G\}$.

The operator $\mathcal M_t^G$ is a convolution operator with the kernel
 \begin{align*}
\mathcal K_t^G(x)=\frac{1}{|G_t\cap \ZZ^d|}\sum_{m\in G_t\cap \ZZ^d}\delta_m(x),   
 \end{align*}
 where $\delta_m$ stands for the Dirac's delta at $m\in\ZZ^d$.  

 It is natural that $\mathcal M_t^G$ can be thought of as a discrete analogue of the integral Hardy--Littlewood averaging operator
 \begin{align}
   \label{eq:112}
M_t^Gf(x)=\frac{1}{|G_t|}\int_{G_t}f(x-y){\rm d}y,
 \end{align}
 defined for every $f\in L^1_{\rm loc}(\RR^d)$.
\subsection{Maximal estimates}
 We know from \cite{
   B2, Car1} that for every $p\in(3/2, \infty]$, there is $C_p>0$
 independent of the dimension such that for every convex symmetric body $G\subset\RR^d$  and for  every $f\in L^p(\RR^d)$ we have
 \begin{align}
   \label{eq:113}
   \big\|\sup_{t>0}|M_t^Gf|\big\|_{L^p}\le C_p\|f\|_{L^p}.
 \end{align}
For the dyadic/lacunary variant of $M_t^G$ the range of $p$'s can be extended and one can show that
 for every $p\in(1, \infty]$, there is $C_p>0$
 independent of the dimension such that for every convex symmetric body $G\subset\RR^d$ and  for every $f\in L^p(\RR^d)$  we have
 \begin{align}
\label{eq:115}
   \big\|\sup_{n\in\ZZ}|M_{2^n}^Gf|\big\|_{L^p}\le C_p\|f\|_{L^p}.
 \end{align}
It is conjectured that the inequality in \eqref{eq:113} holds for all $p\in(1, \infty]$ and for all convex symmetric bodies $G\subset\RR^d$ with the implied constant independent of $d$. It is reasonable to believe that this is true, since it has been verified for a large class of convex symmetric bodies.   
If $G=B^q$ for $q\in[1, \infty]$, where $B^q$ is a  ball induced by a small $\ell^q$
norm in $\RR^d$, i.e. 
\begin{align}
  \label{eq:114}
  \begin{split}
B^q=&\Big\{x=(x_1, \ldots, x_d)\in\RR^d: |x|_q=\Big(\sum_{1\le k\le
  d}|x_k|^q\Big)^{1/q}\le1\Big\}, \\
B^{\infty}&=\big\{x=(x_1, \ldots, x_d)\in\RR^d: |x|_{\infty}=\max_{1\le k\le d}|x_k|\le1\big\},
\end{split}
\end{align}
then the inequality in \eqref{eq:113} holds for all $p\in(1, \infty]$
with a constant $C_{p, q}>0$, which is independent of the
dimension. The case $G=B^q$ for $q\in[1, \infty)$ was handled in
\cite{Mul1} and the case $G=B^{\infty}$ of cubes was recently handled
by the first author in \cite{B3}. In the case of cubes, we remark that the inequality \eqref{eq:113}
for all $p\in(1, \infty]$ cannot be obtained by the interpolation of $L^{\infty}(\RR^d)$ bound from \eqref{eq:113} with  the
weak type $(1,1)$  estimate established in  Aldaz's paper \cite{Ald1}, since the latter bounds involve constants that are unbounded as $d\to\infty$.

\medskip

In the first part of the paper our aim now will be to understand whether it is possible to obtain $\ell^p(\ZZ^d)$ inequalities for the  maximal function associated with $\mathcal M_t^G$ with bounds independent of the dimension. It is not difficult to see (appealing to a covering argument for $p=1$) that for every $p\in(1, \infty]$ and for every symmetric convex body $G\subset\RR^d$ there is a constant $C_{p}(d)>0$ such that for every $f\in\ell^p(\ZZ^d)$ we have
\begin{align}
  \label{eq:43}
   \big\|\sup_{t>0}|\mathcal M_t^Gf|\big\|_{\ell^p}\le C_p(d)\|f\|_{\ell^p}.  
\end{align}
Of course, if $p=\infty$ there is nothing to do, since $\mathcal M_t^G$ is an averaging operator and \eqref{eq:43} holds with bounds independent of the dimension for every $G\subset\RR^d$. Therefore, only parameters $p\in(1, \infty)$ will matter.

At first glance one thinks that it should be possible, in view of
\eqref{eq:113}, to deduce bounds  in \eqref{eq:43} that are independent of $d$ 
on $\ell^p(\ZZ^d)$  from the  dimension-free results on $L^p(\RR^d)$ by
comparison of the maximal function corresponding to $\mathcal M_t^G$
on $\ZZ^d$ with the maximal function corresponding to $M_t^G$ on
$\RR^d$. This idea only gives a partial answer. Namely, we have the following general result.
\begin{theorem}
\label{thm:8}
For a closed symmetric convex body $G\subset \RR^d$ we define the constant
\begin{equation}
\label{eq:cg} c(G):=\inf\{t>0\colon Q_{1/2}\subseteq tG\},
\end{equation}
where $Q_{1/2}=[-1/2,1/2]^d$. Then for every  $p\in(1, \infty)$ the following inequality
	\begin{equation}
	\label{eq:trEu1}
	 \big\|\sup_{t\ge c(G)d}|\mathcal M_t^Gf|\big\|_{\ell^p}\leq e^6 \big\|M_*^G\big\|_{L^p(\RR^d)\to L^p(\RR^d)}\|f\|_{\ell^p}
         \end{equation}
         holds for all $f\in\ell^p(\ZZ^d)$. 
\end{theorem}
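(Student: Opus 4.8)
The plan is to compare the discrete maximal operator, restricted to the range $t \ge c(G)d$, with the continuous maximal operator $M_*^G$ acting on a suitable "piecewise constant" extension of the discrete function. First I would fix $f \in \ell^p(\ZZ^d)$, assume without loss of generality $f \ge 0$, and associate to it the function $\bar f \in L^p(\RR^d)$ defined by $\bar f(x) = f(m)$ whenever $x \in m + Q_{1/2}$, so that $\|\bar f\|_{L^p(\RR^d)} = \|f\|_{\ell^p(\ZZ^d)}$. The key geometric point is that for $t$ large, the cube-sum over $G_t \cap \ZZ^d$ of $f$ is comparable to the integral of $\bar f$ over a slightly dilated copy of $G_t$: every lattice point $m \in G_t$ contributes the cube $m + Q_{1/2}$, and the union of these cubes sits between $G_t$ shrunk by $c(G)\sqrt d/t$ (roughly) and $G_t$ expanded by the same amount, because $Q_{1/2} \subseteq c(G)G$ forces $G$ to be "not too thin" in any direction.

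The heart of the argument is therefore a sandwich estimate: there exist dilation factors $1 - \delta_t \le 1 \le 1 + \delta_t$ with $\delta_t = O(c(G)d/t)$ (here the $\ell^1$-type control on the inradius given by $c(G)$, applied coordinatewise, is what produces the factor $d$ rather than $\sqrt d$, which is why the threshold is $t \ge c(G)d$) such that
\begin{equation*}
\bigcup_{m \in G_t \cap \ZZ^d} (m + Q_{1/2}) \subseteq G_{(1+\delta_t)t}, \qquad G_{(1-\delta_t)t} \subseteq \bigcup_{m \in G_t \cap \ZZ^d}(m + Q_{1/2}).
\end{equation*}
From the first inclusion, $\sum_{m \in G_t \cap \ZZ^d} f(x-m) = \int_{\cup(m+Q_{1/2})} \bar f(x - y)\,\dif y \le |G_{(1+\delta_t)t}| \cdot M_{(1+\delta_t)t}^G \bar f(x)$ for $x \in \ZZ^d$ (after checking the translation by $x$ interacts correctly with the tiling, which it does since $x \in \ZZ^d$). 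Dividing by $|G_t \cap \ZZ^d|$ and using the second inclusion to bound $|G_t \cap \ZZ^d| \ge |G_{(1-\delta_t)t}| = (1-\delta_t)^d t^d |G|$ from below, while $|G_{(1+\delta_t)t}| = (1+\delta_t)^d t^d |G|$, yields
\begin{equation*}
\mathcal M_t^G f(x) \le \Big(\frac{1+\delta_t}{1-\delta_t}\Big)^d M_{(1+\delta_t)t}^G \bar f(x) \le \Big(\frac{1+\delta_t}{1-\delta_t}\Big)^d M_*^G \bar f(x).
\end{equation*}

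The main obstacle — and the reason for the precise constant $e^6$ — is controlling $\big(\frac{1+\delta_t}{1-\delta_t}\big)^d$ uniformly over $t \ge c(G)d$. With $\delta_t \le C c(G) d / t$ for an explicit absolute constant $C$ (I expect $C$ around $1$ or $2$ depending on how the inradius bound is unwound), the restriction $t \ge c(G)d$ forces $\delta_t \le C/d$ uniformly in the dimension, hence $\big(\frac{1+\delta_t}{1-\delta_t}\big)^d \le \big(\frac{1+C/d}{1-C/d}\big)^d \to e^{2C}$, and one checks this is $\le e^6$ for all $d$ once the precise value of $C$ is pinned down (possibly requiring $d$ large, with small dimensions handled directly by the trivial bound \eqref{eq:43} — though in fact one wants the clean statement, so the bookkeeping must be done carefully). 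Taking the supremum over $t \ge c(G)d$ and then $L^p$ norms gives $\big\|\sup_{t \ge c(G)d} |\mathcal M_t^G f|\big\|_{\ell^p} \le e^6 \|M_*^G\|_{L^p \to L^p} \|\bar f\|_{L^p} = e^6 \|M_*^G\|_{L^p \to L^p}\|f\|_{\ell^p}$. The only subtlety beyond the constant chase is making the sandwich inclusions rigorous: the lower inclusion needs that $G$ contains a ball of radius comparable to $1/(2c(G))$ scaled appropriately so that shrinking by $\delta_t$ removes a boundary layer thick enough to absorb any cube $m+Q_{1/2}$ meeting $G_t$, and conversely the upper inclusion needs that points within $\ell^\infty$-distance $1/2$ of $G_t$ lie in $G_{(1+\delta_t)t}$, which again follows from the inradius bound $Q_{1/2} \subseteq c(G)G$ rescaled. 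These are elementary convex-geometry facts, but getting the constant in $\delta_t$ right is where the real care goes.
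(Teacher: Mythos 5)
Your approach is essentially the paper's: extend $f$ to the piecewise-constant $\bar f$, compare the discrete average over $G_t\cap\ZZ^d$ with a continuous average over a slightly enlarged dilate, and control the resulting $d$-th-power volume ratio using $t\ge c(G)d$. Two points need attention before the argument is complete. First, your claimed boundary-layer width is off by a factor of $d$. The inclusion $Q_{1/2}\subseteq c(G)G$ gives directly $\sup_{s\in Q_{1/2}}|s|_G=c(G)$, so the additive enlargement needed is $O(c(G))$, i.e.\ $\delta_t=O(c(G)/t)$, not $O(c(G)d/t)$ as you state (your ``coordinatewise'' reasoning is unnecessarily lossy, and if taken literally would give $\delta_t=O(1)$ at $t=c(G)d$, making $((1+\delta_t)/(1-\delta_t))^d$ unbounded and the whole estimate fail). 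The factor $d$ in the threshold comes purely from the exponent in the volume ratio: to keep $((1+\delta_t)/(1-\delta_t))^d$ bounded uniformly in $d$ you need $\delta_t\lesssim 1/d$, and combined with $\delta_t\simeq c(G)/t$ this forces $t\gtrsim c(G)d$. Your stated conclusion $\delta_t\le C/d$ is correct but does not follow from the bound $\delta_t\le Cc(G)d/t$ you wrote above it.

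Second, you only establish the pointwise comparison $\mathcal M_t^G f(x)\le C\,M_*^G\bar f(x)$ for lattice $x\in\ZZ^d$ and then ``take $L^p$ norms,'' but $\|M_*^G\bar f\|_{\ell^p(\ZZ^d)}$ is not in general controlled by $\|M_*^G\bar f\|_{L^p(\RR^d)}$ — sampling a function on $\ZZ^d$ can inflate its $\ell^p$ norm. The paper fixes this by proving the slightly stronger pointwise bound $\mathcal M_t^G f(n)\le C\,M_{t+2c(G)}^G F(x)$ for every $x\in n+Q_{1/2}$, spending one extra $c(G)$ of enlargement to absorb the shift $|x-n|_G\le c(G)$; integrating in $x$ over $n+Q_{1/2}$ and summing over $n$ then yields the $\ell^p$-to-$L^p$ transfer cleanly. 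You need this extra step (your $\delta_t$ must pick up an additional $c(G)/t$), and once both corrections are made your argument matches the paper's proof, with additive dilations in place of your multiplicative ones being a purely cosmetic difference.
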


This simple comparison argument will allow us to deduce dimension-free
estimates for those discrete maximal functions whose supremum is taken
over $t\ge c(G)d$ as long as the corresponding dimension-free bounds are
available for their continuous analogues. At this stage, the whole
difficulty lies in estimating $\sup_{0<t\le c(G)d}|\mathcal M_t^Gf|$,
and here the things are getting more complicated.

We shall show that the dimension-free estimates in the discrete case
are not as broad as in the continuous setup by constructing an
example of a symmetric convex body in $\ZZ^d$ for which maximal
estimates on $\ell^p(\ZZ^d)$ for every $p\in(1, \infty)$ involve constants
which grow to infinity as $d\to \infty$.

Namely, let
$1\leq \lambda_1<\cdots<\lambda_d<\ldots<\sqrt{2}$ be a fixed sequence and define  the ellipsoid 
\begin{equation}
\label{eq:elip}
E_d:=\Big\{x\in \RR^d\colon \sum_{j=1}^d \lambda_j^2\, x_j^2\,\le 1 \Big\}.
\end{equation}
Then on the one hand, in view of the comparison principle described in
Theorem \ref{thm:8} and inequality \eqref{eq:113} with
$G=E_d\subset\RR^d$, one is able to show that for every
$p\in(3/2, \infty]$ there is $C_p>0$ independent of $d\in\NN$ such
that the following estimate
\begin{align}
\label{eq:49}
   \big\|\sup_{t\ge d^{3/2}}|\mathcal M_t^{E_d}f|\big\|_{\ell^p}\le C_p\|f\|_{\ell^p}
\end{align}
holds for every $f\in\ell^p(\ZZ^d)$, since $\frac{1}{2}d^{1/2}\le c(E_d)\le d^{1/2}$, see Section \ref{sec:2} for more details.

On the other hand, Theorem \ref{thm:9} shows that \eqref{eq:49} is not
true if the full maximal function corresponding to $\mathcal M_t^{E_d}$ is
considered. Namely, denoting \begin{equation}
\label{eq:Cepd}
\mathcal C_p(E_d):=\sup_{\|f\|_{\ell^p}\le1} \big\| \sup_{t>0}|\mathcal M_t^{E_d} f|\big\|_{\ell^p},
\end{equation}   we have the following result.

\begin{theorem}
\label{thm:9}
For every $p\in(1, \infty)$ there exists a constant $C_p>0$ such that for all $d\in \NN$ we have
\[
\mathcal C_p(E_d)\ge C_p\cdot(\log d)^{1/p}.
\]
\end{theorem}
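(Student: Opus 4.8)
\textbf{Proof proposal for Theorem \ref{thm:9}.}
The key point is the arithmetic rigidity of the dilated ellipsoids at small scales: if $x\in\ZZ^d$ and $\sum_j\lambda_j^2x_j^2\le t^2<2$, then $\sum_j x_j^2\le\sum_j\lambda_j^2x_j^2<2$, so $x\in\{0,\pm e_1,\dots,\pm e_d\}$, and $\pm e_j\in(E_d)_t$ exactly when $\lambda_j\le t$. Writing $C_k:=\{0,\pm e_1,\dots,\pm e_k\}$ and setting $\lambda_{d+1}:=\sqrt2$, it follows that for every $k\in\{1,\dots,d\}$ and every $t\in[\lambda_k,\lambda_{k+1})$ — a nonempty subinterval of $(0,\sqrt2)$ — one has $(E_d)_t\cap\ZZ^d=C_k$ and hence $\mathcal M_t^{E_d}=\mathcal A^{(k)}$, where
\[
\mathcal A^{(k)}f:=\frac1{2k+1}\Big(f+\sum_{j=1}^{k}\big(f(\cdot-e_j)+f(\cdot+e_j)\big)\Big).
\]
Thus $\sup_{t>0}|\mathcal M_t^{E_d}f|\ge\max_{1\le k\le d}|\mathcal A^{(k)}f|$, and it suffices to produce, for every $d$, a single $f\in\ell^p(\ZZ^d)$ with $\big\|\max_{1\le k\le d}|\mathcal A^{(k)}f|\big\|_{\ell^p}\ge C_p(\log d)^{1/p}\|f\|_{\ell^p}$. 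This reduces the matter to the \emph{discrete} ``cross averages'' only — consistent with the fact that the continuous maximal function over $E_d$ is dimension-free by a linear change of variables.

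The test function is a ``layered checkerboard''. Put $L:=\lfloor\log_2(d+1)\rfloor$ and $D:=2^L-1\le d$, partition $\{1,\dots,D\}$ into consecutive blocks $I_m:=\{2^{m-1},\dots,2^m-1\}$, $m=1,\dots,L$, so that $|I_m|=2^{m-1}$ and $I_1\cup\dots\cup I_m=\{1,\dots,k_m\}$ with $k_m:=2^m-1$, fix a large even integer $N$ (e.g.\ $N\ge 4^{L+1}$), let $B:=\{0,\dots,N-1\}^D\times\{0\}^{d-D}$, and set
\[
F:=\Big\{x\in\ZZ^d:\ x_j=0\ \text{ for all }j>D,\ \ \textstyle\sum_{j\in I_m}x_j\ \text{ is even for }m=1,\dots,L\Big\},\qquad f:=\ind{F\cap B}.
\]
Since the parity constraints decouple over blocks and $N$ is even, $\|f\|_{\ell^p}^p=|F\cap B|=\prod_{m=1}^L\tfrac12N^{|I_m|}=2^{-L}N^D$. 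For each $m$ let $\Omega_m\subset B$ consist of the $x$ with $x_j=0$ for $j>D$ such that $\sum_{j\in I_m}x_j$ is odd while $\sum_{j\in I_i}x_j$ is even for all $i\ne m$. The sets $\Omega_1,\dots,\Omega_L$ are pairwise disjoint, and by the same decoupling $|\Omega_m|=2^{-L}N^D=\|f\|_{\ell^p}^p$ for every $m$.

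The heart of the argument is the pointwise bound $\mathcal A^{(k_m)}f(x)\ge\tfrac12$ for every $x\in\Omega_m$ with $1\le x_j\le N-2$ for all $j\le D$. For such $x$ one has $x-y\in B$ for all $y\in C_{k_m}$, so the cutoff to $B$ is irrelevant and $(2k_m+1)\,\mathcal A^{(k_m)}f(x)=\ind{F}(x)+\sum_{j\le k_m}\big(\ind{F}(x-e_j)+\ind{F}(x+e_j)\big)$; here $\ind{F}(x)=0$ since block $I_m$ has odd sum at $x$. If $j\in I_{m'}$ with $m'\le m$, then changing the $j$-th coordinate by $\pm1$ toggles the parity of $\sum_{i\in I_{m'}}x_i$ and leaves all other block sums untouched, so $\ind{F}(x\pm e_j)=1$ precisely when $I_{m'}$ is the unique block with odd sum at $x$, which on $\Omega_m$ forces $m'=m$. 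Summing the two terms over $j\in I_m$ gives $(2k_m+1)\,\mathcal A^{(k_m)}f(x)=2|I_m|=2^m$, hence $\mathcal A^{(k_m)}f(x)=2^m/(2^{m+1}-1)>\tfrac12$. It is precisely the geometric growth $|I_m|\asymp k_m$ that keeps this ratio bounded below uniformly in $m$; correspondingly only $L\asymp\log d$ such blocks fit into $d$ coordinates, and this is where the logarithm originates.

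To conclude, the ``boundary'' set $\{x\in B:x_j\in\{0,N-1\}\text{ for some }j\le D\}$ has cardinality $N^D-(N-2)^D\le 2DN^{D-1}\le\tfrac12\,2^{-L}N^D=\tfrac12|\Omega_m|$ once $N\ge 4^{L+1}$, so at least half of each $\Omega_m$ consists of points to which the previous bound applies. Since the $\Omega_m$ are disjoint,
\[
\big\|\max_{1\le k\le d}|\mathcal A^{(k)}f|\big\|_{\ell^p}^p\ \ge\ \sum_{m=1}^{L}\ \sum_{\substack{x\in\Omega_m\\ 1\le x_j\le N-2\ \forall j\le D}}|\mathcal A^{(k_m)}f(x)|^p\ \ge\ \sum_{m=1}^{L}2^{-p}\cdot\tfrac12|\Omega_m|\ =\ 2^{-p-1}L\,\|f\|_{\ell^p}^p,
\]
and since $L\ge\tfrac12\log_2 d$ for all $d\ge2$ (bounded $d$ being trivial after enlarging $C_p$), this yields $\mathcal C_p(E_d)\ge C_p(\log d)^{1/p}$ with $C_p>0$ depending only on $p$ — of order $2^{-1-1/p}$ up to an absolute factor. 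The main obstacle is the combinatorial bookkeeping in the displayed identity for $\mathcal A^{(k_m)}f$ on $\Omega_m$, together with the realization that one set $F$ must simultaneously force \emph{every} cross average $\mathcal A^{(k_m)}$ to be large on its own dedicated, mutually disjoint region $\Omega_m$; the remaining steps are routine.
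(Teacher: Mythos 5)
Your proposal is correct and follows essentially the same strategy as the paper's proof: identify the dilated ellipsoids at small scales with the cross sets $\{0,\pm e_1,\dots,\pm e_k\}$, build a test function out of parity constraints on geometrically growing blocks of coordinates so that each cross average is large on its own dedicated region, use disjointness of these regions to sum the contributions, and observe that only about $\log_2 d$ such blocks fit. The only cosmetic difference is that you place the support in $\{0,\dots,N-1\}^D$ and discard an explicit interior/boundary split, whereas the paper uses $|y_i|\le 2^d$ and absorbs the boundary loss into the constant $\tfrac15$ in its inequality \eqref{eq:40}; the underlying construction and counting are the same.
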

Theorem \ref{thm:9} shows that the question about the dimension-free
estimates in the discrete setting for the Hardy-Littlewood maximal
functions is much more delicate and there is no obvious conjecture to
prove.  So, it is interesting to know whether we can expect bounds
independent of the dimension on $\ell^p(\ZZ^d)$ with $p\in(1, \infty)$
for the discrete maximal function $\sup_{t>0}|\mathcal M_t^{B^q}f|$,
where $B^q$ is a ball as in \eqref{eq:114} with $q\in[1,
\infty)$. This question is considerably harder due to the lack of
reasonable error estimates depending on $d$ for the number of lattice points in the sets
$B^q$ and new methods will surely need to be invented.  Therefore, even
the $\ell^2$ theory is very intriguing. In the the ongoing project
\cite{BMSW2} we initiated investigations in this direction and the
context of the discrete Euclidean balls $B^2$ is studied.

However, if $q=\infty$ then $B^{\infty}_t=[-t, t]^d$ is a cube and an accurate count for the
number of lattice points is not a problem any more. The product
structure of the cubes allows us to count the number of lattice points
in $B^{\infty}_t$ and we get
$|B^{\infty}_t\cap\ZZ^d|=(2\lfloor t\rfloor+1)^d$. This property distinguishes the cubes from the $B^q$ balls for $q\in[1, \infty)$ and in some sense encourages us to think that the inequality \eqref{eq:43} may hold with the bound independent of the dimension for a certain range of $p$'s.

Form now on, for simplicity
of the notation we will write $Q_t=[-t, t]^d$ for $t>0$ and $Q=[-1, 1]^d$. We shall
provide analogues of  inequalities \eqref{eq:113} and
\eqref{eq:115} for the discrete operators $\mathcal M_t^{Q}$ over the cubes
$Q_t\cap\ZZ^d$.
One of the  main theorems of this paper is the following maximal result.
\begin{theorem}
  \label{thm:1}
  For every $p\in(3/2, \infty]$ there exists a constant $C_p>0$ such that for
  every $d\in\NN$ and every $f\in\ell^p(\ZZ^d)$ we have
  \begin{align}
    \label{eq:68}
    \big\|\sup_{t>0}|\mathcal M_t^Qf|\big\|_{\ell^p}\le  C_p\|f\|_{\ell^p}.
  \end{align}
\end{theorem}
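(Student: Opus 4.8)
The plan is to split the supremum at the scale $t\asymp d$ (which is essentially $c(Q)d$ since $c(Q)=1/2$ for the cube $Q=[-1,1]^d$) and treat the two ranges by completely different mechanisms. For the \emph{large scales} $t\gtrsim d$ the work is already done: Theorem \ref{thm:8} together with the dimension-free continuous bound \eqref{eq:113} for $G=Q$ (the cube case, due to the first author \cite{B3}) gives
\begin{align}
\label{eq:plan1}
\big\|\sup_{t\ge d/2}|\mathcal M_t^Q f|\big\|_{\ell^p}\le e^6\|M_*^Q\|_{L^p(\RR^d)\to L^p(\RR^d)}\|f\|_{\ell^p}\le C_p\|f\|_{\ell^p}
\end{align}
for all $p\in(1,\infty]$, with $C_p$ independent of $d$. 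Actually since $|Q_t^\infty\cap\ZZ^d|=(2\lfloor t\rfloor+1)^d$ one can run the comparison argument of Theorem \ref{thm:8} slightly more efficiently here and even push the threshold below $d$, but the naive split suffices. So the entire problem reduces to the \emph{small scales}, i.e.\ bounding $\sup_{0<t\le d}|\mathcal M_t^Q f|$ on $\ell^p(\ZZ^d)$ with a constant independent of $d$ for $p\in(3/2,\infty]$.

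For the small scales I would exploit the product structure of the cube, which is exactly what distinguishes $Q$ from the other $\ell^q$-balls. Since $\lfloor t\rfloor$ ranges only over $\{0,1,\dots,d-1\}$ in this regime, it costs nothing to restrict attention to integer dilations and write $\mathcal M_N^Q=\bigotimes_{j=1}^d A_N$, where $A_N$ is the one-dimensional average over $\{-N,\dots,N\}\cap\ZZ=\ZZ\cap[-N,N]$ acting in the $j$-th coordinate. The key analytic input is a dimension-free square-function (or $g$-function) estimate for such tensor-power averages. Concretely, one introduces a suitable smooth/heat-type semigroup $\{P_s\}$ (e.g. the tensor power of a one-dimensional Poisson-type or discrete heat semigroup) that dominates the averages in the sense that $\mathcal M_N^Q = P_{s(N)} + (\mathcal M_N^Q - P_{s(N)})$ with $s(N)\asymp N^2$, bounds $\sup_s|P_s f|$ dimension-freely on $\ell^p$ via the Cowling–Stein maximal theorem for symmetric diffusion semigroups (whose constants depend only on $p$), and controls the error terms $\mathcal M_N^Q - P_{s(N)}$ through an $\ell^2$ square function
\begin{align}
\label{eq:plan2}
\Big\|\Big(\sum_{N=1}^{d}|(\mathcal M_N^Q-P_{s(N)})f|^2\Big)^{1/2}\Big\|_{\ell^2}\le C\|f\|_{\ell^2}
\end{align}
with $C$ absolute. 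Estimate \eqref{eq:plan2} is proved on the Fourier side: by Plancherel it reduces to a pointwise bound $\sum_{N\le d}|\widehat{\mathcal M_N^Q}(\xi)-\widehat{P_{s(N)}}(\xi)|^2\le C$ uniformly in $\xi\in\TT^d$, and here the product structure makes the multiplier $\widehat{\mathcal M_N^Q}(\xi)=\prod_{j=1}^d D_N(\xi_j)$ (a normalized Dirichlet-type kernel) completely explicit, so the oscillation/decay of each one-dimensional factor can be summed against the Gaussian/Poisson comparison factor by a routine but careful telescoping in $N$.

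The $\ell^2$ estimate combined with the $\ell^\infty$ triviality interpolates to give $\sup_{0<t\le d}|\mathcal M_t^Q f|$ bounded on $\ell^p$ for $p\in(2,\infty]$; to reach down to $p>3/2$ one needs a genuine $\ell^p$ (not merely $\ell^2$) version of the square-function/maximal argument, and this is where I expect the real obstacle to be. The honest route is to mimic the structure of the proof of \eqref{eq:113} in the cube case \cite{B3}: after peeling off the semigroup part, one is left with a lacunary-type maximal operator and a short-variation/long-variation decomposition of the remaining averages, and one must show the relevant Littlewood–Paley and $r$-variational pieces obey $\ell^p$ bounds with $p$-dependent but $d$-independent constants. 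The one-dimensional factors are harmless; the danger is that the comparison multipliers $\widehat{\mathcal M_N^Q}-\widehat{P_{s(N)}}$, while small in $\ell^2$, are not obviously good Fourier multipliers on $\ell^p$ uniformly in $d$ — controlling them likely forces one to use the semigroup maximal theorem as a black box for the $\ell^p$ side and to handle the differences through an $\ell^2$-valued (Rademacher/Khintchine) argument bootstrapped by the trivial $\ell^\infty$ bound and the analytic interpolation of the family in a complex parameter, exactly the mechanism that produces the restriction $p>3/2$ rather than $p>1$. Once the small-scale bound is in hand for $p\in(3/2,\infty]$, adding \eqref{eq:plan1} finishes the proof.
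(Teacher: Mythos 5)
Your proposal gets the general shape right (semigroup comparison, square functions, Littlewood--Paley, interpolation) but leaves the crux of the theorem --- the step from $p=2$ down to $p\in(3/2,2]$ --- as an honest ``here be dragons'' gap, and the mechanism you gesture at for filling it is precisely the one the paper argues is unavailable in the discrete setting. You write that the restriction $p>3/2$ should come from ``analytic interpolation of the family in a complex parameter,'' which is Carbery's fractional integration device; but the paper explicitly states (in the overview of methods) that it is unclear how to make fractional integration work on $\ZZ^d$, and introduces instead a \emph{numerical Rademacher--Menshov inequality} (Lemma~\ref{lem:6}, inequality~\eqref{eq:42}) as the replacement. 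That inequality reduces $\sup_{2^n\le t<2^{n+1}}|\mathcal M_t^Q f - \mathcal M_{2^n}^Q f|$ to a sum over $l=0,\dots,n$ of square functions whose dyadic differences all live at scale $2^{n-l}$; on $\ell^2$ each such square function gains $2^{-l/2}$ by the third Fourier estimate of Proposition~\ref{pro:FourEst}, while on $\ell^1$ one only loses $2^l$, and interpolating these two losses/gains produces a net factor $2^{l(3/p-2)}$ that is summable exactly when $p>3/2$. That is the arithmetic that produces $3/2$; your proposal does not contain it and the route you sketch to get there is the one the authors discarded.

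Two smaller points. First, the large-scale/small-scale split at $t\asymp d$ via Theorem~\ref{thm:8} is correct but not what the paper does for Theorem~\ref{thm:1}: the almost-orthogonality principle (Proposition~\ref{prop:2}), run with the lacunary sequence $a_n=2^n$, $H_t=\mathcal M_{2^n}^Q$ on $U_n=[2^n,2^{n+1})\cap[1,\infty)$ and $M_t=\mathcal M_t^Q$, treats all scales $t\ge1$ uniformly, so the split is an unnecessary detour (and restricting $\lfloor t\rfloor$ to $\{0,\dots,d-1\}$ doesn't actually shrink the set of relevant dyadic blocks in a way that helps). Second, the paper's comparison semigroup is the discrete Poisson semigroup $P_t$ generated by $\mathcal L^{1/2}$, with multiplier $e^{-t|\xi|_{\st}}$, so $\mathcal M_t^Q$ is compared against $P_t$ at the \emph{same} scale $t$; there is no quadratic reparametrization $s(N)\asymp N^2$ as in a heat-semigroup comparison. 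The genuinely missing idea in your proposal is Lemma~\ref{lem:6}; without it, the passage from the $\ell^2$ square function bound to an $\ell^p$ bound for $p\in(3/2,2]$ --- i.e.\ the verification of condition~\eqref{eq:58} and the $j$-decay in condition~\eqref{eq:60} of Proposition~\ref{prop:2} --- is not achieved.
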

 If we restrict the supremum in \eqref{eq:68} to the dyadic times, i.e. $t\in\{2^n: n\in\NN_0\}$, where $\NN_0=\NN\cup\{0\}$,  then the range of $p$'s can be improved.  

 \begin{theorem}
  \label{thm:0}
  For every $p\in(1, \infty]$ there exists a constant $C_p>0$ such that for
  every $d\in\NN$ and every $f\in\ell^p(\ZZ^d)$ we have
  \begin{align}
    \label{eq:87'}
    \big\|\sup_{n\in\NN_0}|\mathcal M_{2^n}^Qf|\big\|_{\ell^p}\le  C_p\|f\|_{\ell^p}.
  \end{align}
\end{theorem}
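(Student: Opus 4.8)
The plan is to split the dyadic maximal function at the threshold $t\asymp c(Q)\,d$ furnished by Theorem~\ref{thm:8}; note that for $Q=[-1,1]^d$ one has $c(Q)=1/2$. We write
\[
\sup_{n\in\NN_0}|\mathcal{M}_{2^n}^Q f|\;\le\;\sup_{n:\,2^n\ge c(Q)d}|\mathcal{M}_{2^n}^Q f|\;+\;\sup_{n:\,1\le 2^n< c(Q)d}|\mathcal{M}_{2^n}^Q f|.
\]
The first (high--scale) term is bounded immediately by Theorem~\ref{thm:8}: its $\ell^p$--norm is at most $e^{6}\,\big\|M_*^Q\big\|_{L^p(\RR^d)\to L^p(\RR^d)}\|f\|_{\ell^p}$, and $\big\|M_*^Q\big\|_{L^p\to L^p}\le C_p$ independently of $d$ for every $p\in(1,\infty]$ -- this is precisely the dimension--free bound for the continuous maximal function over cubes, i.e.\ the case $G=B^{\infty}$ of \eqref{eq:113}. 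So everything reduces to the low--scale term, a supremum over only $O(\log d)$ scales; the trivial bound by $\log d$ is of course useless, and this is where a genuinely dimension--free argument is needed. (By contrast, it is the continuum of small scales in Theorem~\ref{thm:1} that forces the restriction $p>3/2$ there.)

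For the low scales we exploit the product structure $\mathcal{M}_{2^n}^Q=\mathcal{A}_{N_n}^{\otimes d}$, $N_n=2^n$, where $\mathcal{A}_N$ is the one--dimensional average over $[-N,N]\cap\ZZ$, and we compare with the heat semigroup $H_s=e^{s\Delta}$ on $\ZZ^d$, $\Delta$ the grid Laplacian. This semigroup is a symmetric diffusion semigroup and also tensorizes, $H_s=(e^{s\Delta_{\ZZ}})^{\otimes d}$, so $\big\|\sup_{s>0}|H_s f|\big\|_{\ell^p}\le C_p\|f\|_{\ell^p}$ for all $p\in(1,\infty]$ with $C_p$ absolute, by Stein's maximal theorem. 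Choosing times $s_n\asymp 4^n$ so that the symbols of $\mathcal{A}_{N_n}$ and $e^{s_n\Delta_{\ZZ}}$ agree to second order at $0$, and using only the elementary inequality $\sup_n|g_n|\le\big(\sum_n|g_n|^2\big)^{1/2}$, we obtain
\[
\sup_{n:\,1\le 2^n< c(Q)d}|\mathcal{M}_{2^n}^Q f|\;\le\;\Big(\sum_{n:\,1\le 2^n<c(Q)d}\big|(\mathcal{M}_{2^n}^Q-H_{s_n})f\big|^2\Big)^{1/2}\;+\;\sup_{s>0}|H_s f|.
\]
It thus suffices to prove, with constants independent of $d$, the square--function bound $\big\|\big(\sum_n|(\mathcal{M}_{2^n}^Q-H_{s_n})f|^2\big)^{1/2}\big\|_{\ell^p}\le C_p\|f\|_{\ell^p}$ for every $p\in(1,\infty)$; together with the trivial $p=\infty$ estimate (each $\mathcal{M}_t^Q$ being an average) and interpolation this yields \eqref{eq:87'}.

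The $\ell^2$ case of the square--function bound is, by Plancherel on $\TT^d$, equivalent to the uniform estimate
\[
\sup_{d\in\NN}\ \sup_{\theta\in\TT^d}\ \sum_{n:\,1\le 2^n<c(Q)d}\Big|\prod_{j=1}^d D_{N_n}(\theta_j)-\prod_{j=1}^d e^{-s_n\psi(\theta_j)}\Big|^2<\infty,
\]
where $D_N(\theta)=\frac{\sin((2N+1)\pi\theta)}{(2N+1)\sin\pi\theta}$ is the symbol of $\mathcal{A}_N$ and $\psi(\theta)=4\sin^2\pi\theta$ is, up to a constant, the symbol of $-\Delta_{\ZZ}$. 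This is a product, discrete analogue of Carbery's square--function estimate \cite{Car1} for lacunary averages over convex bodies, and proving it is the main obstacle. The idea is to group the coordinates of $\theta$ according to the size of $2^n\langle\theta_j\rangle$ ($\langle\cdot\rangle$ the distance to $\ZZ$): coordinates with $2^n\langle\theta_j\rangle\lesssim1$ give factors close to $1$, and on the product of these the second--order matching makes the consecutive increments in $n$ square--summable; coordinates with $2^n\langle\theta_j\rangle\gtrsim1$ give factors that are $\lesssim(2^n\langle\theta_j\rangle)^{-1}$, and as soon as several of them occur the whole product is so small that its increments are square--summable as well -- here one uses the one--dimensional bound $|D_N(\theta)|\lesssim\min\{1,(N\langle\theta\rangle)^{-1}\}$ as the substitute for the convexity estimates of the continuous theory, together with the fact that, through the monotone substitution $\psi$, each $\mathcal{A}_N$ is a spectral multiplier of $-\Delta_{\ZZ}$. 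For $p\neq2$ one upgrades the $\ell^2$ bound by representing each one--dimensional difference $\mathcal{A}_{N_n}-e^{s_n\Delta_{\ZZ}}$ as a superposition of the operators $s\,\partial_s e^{s\Delta_{\ZZ}}$ (a Laplace/Littlewood--Paley representation, legitimate since the difference of symbols vanishes to second order at $0$) and invoking the dimension--free $L^p$--boundedness of the $g$--function of a symmetric diffusion semigroup; carrying this out across the $d$ tensor factors without losing a power of $d$ is again controlled by the product estimate above. Assembling the high-- and low--scale bounds and interpolating with $p=\infty$ completes the proof of Theorem~\ref{thm:0}.
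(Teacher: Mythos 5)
Your $\ell^2$ analysis is sound and hits the same crux as the paper: the uniform Plancherel bound
\[
\sup_{d}\ \sup_{\xi\in\TT^d}\ \sum_{n}\big|\mathfrak m_{2^n}(\xi)-\mathfrak p_{2^n}(\xi)\big|^2<\infty
\]
is exactly what the one--dimensional estimates $|\mathfrak m_N(\xi)|\lesssim\min\{1,(N|\xi|)^{-1}\}$ and $|\mathfrak m_N(\xi)-1|\lesssim N|\xi|$ of Proposition~\ref{pro:FourEst} give, and the choice of heat vs.\ Poisson semigroup is cosmetic. The preliminary split at $t\asymp c(Q)d$ via Theorem~\ref{thm:8} is harmless but unnecessary: these estimates hold for all $N\ge1$, so the comparison with the semigroup handles every scale uniformly without special--casing the low range.

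The genuine gap is the passage from $\ell^2$ to $\ell^p$ for $p\ne2$. You propose to prove the direct square--function bound $\big\|\big(\sum_n|(\mathcal M_{2^n}-H_{s_n})f|^2\big)^{1/2}\big\|_{\ell^p}\lesssim\|f\|_{\ell^p}$ by tensorizing a one--dimensional Laplace representation of $\mathcal A_N-e^{s\Delta_{\ZZ}}$ through $s\,\partial_s e^{s\Delta_{\ZZ}}$ and then invoking the semigroup $g$--function. Two things break. First, the $d$--fold difference does not factorize: $\mathcal M_{2^n}^Q-H_{s_n}=\mathcal A_{N_n}^{\otimes d}-(e^{s_n\Delta_{\ZZ}})^{\otimes d}$ is not a tensor power of the one--dimensional difference, so there is no clean way to ``carry this out across the $d$ tensor factors''; any naive telescoping produces $d$ terms and loses a power of $d$. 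Second, while each one--dimensional $\mathcal A_N$ is a spectral multiplier of $-\Delta_{\ZZ}$, the $d$--dimensional operator $\mathcal M_N^Q$, whose symbol is the product $\prod_j D_N(\theta_j)$, is not a function of the $d$--dimensional Laplacian symbol $\sum_j\psi(\theta_j)$, so the spectral/$g$--function calculus does not apply to it directly. What actually closes the gap in the paper is Carbery's almost--orthogonality principle (Proposition~\ref{prop:2}): one inserts a Littlewood--Paley resolution $S_j=P_{2^j}-P_{2^{j-1}}$ and proves the $\ell^2$ square--function bound \emph{with geometric decay in the Littlewood--Paley index $j$}; the off--diagonal decay is the extra information that makes the interpolation with $\ell^\infty$ (where the dyadic maximal function is trivially bounded) converge after summing over $j$. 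In the dyadic case the other hypotheses of the principle are vacuous (each $U_n$ is a singleton), which is precisely why the full range $p\in(1,\infty]$ is obtained. Without that decay--in--$j$ structure, neither interpolation with $p=\infty$ nor a direct square--function argument yields a dimension--free bound for $p\ne2$.
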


In fact, in Section \ref{sec:4} we prove a stronger result and we show that  the maximal  inequality from \eqref{eq:87'} holds with any $a_n$ in place of $2^n$, where $(a_n: n\in\NN_0)$ is a lacunary sequence\footnote{A sequence $(a_n: n\in\NN_0)\subseteq(0, \infty)$ is called lacunary, if $\inf_{n\in\NN_0}\frac{a_{n+1}}{a_n}>1$.} in $(0, \infty)$.

In the second part of the paper our aim will be to strengthen the maximal estimates from \eqref{eq:68} and \eqref{eq:87'} and provide $r$-variational estimates independent of the dimension for the operators $\mathcal M_{t}^Q$.

\subsection{$r$-variational estimates}
Very recently, in \cite{BMSW1} we studied estimates independent of the
dimension for the averaging operators \eqref{eq:113} in the context of
$r$-variational seminorms.  Recall that for $r\in[1, \infty)$ the
$r$-variation seminorm $V_r$ of a complex-valued function
$(0, \infty)\times X\ni (t, x)\mapsto\mathfrak a_t(x)$ on some measure
space $(X, \mathcal B(X), \mu)$  is defined by setting
\[
V_r(\mathfrak a_t(x): t\in Z)=\sup_{\atop{t_0<\ldots <t_J}{t_j\in Z}}
\bigg(\sum_{j=0}^J|\mathfrak a_{t_{j+1}}(x)-\mathfrak a_{t_j}(x)|^r\bigg)^{1/r},
\] 
where $Z$ is a subset of $(0, \infty)$ and the supremum is taken over
all finite increasing sequences in $Z$. If $Z$ is the dyadic set $\{2^n:n\in\ZZ\}$ then the $r$-variation $V_r$ is often called the long $r$-variation seminorm.

In what follows we will assume that
$(0, \infty)\ni t\mapsto \mathfrak a_t(x)$ is  a continuous
function for every $x\in X$ or that $Z$ is countable, then there is no problem with 
the measurability of $V_r(\mathfrak a_t(x): t\in Z)$. In the discrete setup the
function $(0, \infty)\ni t\mapsto|G_t\cap\ZZ^d|$ takes only countably many
values, so the parameter $t$ will be always restricted to a countable subset
of $(0, \infty)$. In the case of the discrete cubes $Q_t\cap\ZZ^d$ we will have  $Z=\NN$.

The $r$-variational seminorm is a very useful tool in pointwise
convergence problems. If for some $r\in [1, \infty)$ and $x\in X$ we
have
\[
V_r(\mathfrak a_t(x): t>0)<\infty
\]
then the limits $\lim_{t\to0}a_t(x) $ and $\lim_{t\to\infty}a_t(x)$
exist. So we do not need to establish pointwise convergence on a dense
class as it is usually done in the classical approach. This is very
important while pointwise convergence problems are discussed in the
ergodic context and there is no easy way to find a candidate for such a
dense class.  However, $V_r$ is more difficult to bound than the
maximal function, since it dominates the supremum norm, i.e. for any
$t_0>0$ we have
\[
\sup_{t>0}|\mathfrak a_t(x)|\le |\mathfrak a_{t_0}(x)| + 2V_r(\mathfrak a_t(x): t>0).
\]
There is an extensive literature about the  $r$-variational estimates. For the purposes of this article the most relevant will be
\cite{JR1}, \cite{JSW} and \cite{MTS1}, see also the references given there.

In \cite{BMSW1} we proved that for every $p\in(3/2, 4)$ and for every $r\in(2, \infty)$  there exists a
  constant $C_{p, r}>0$ independent of the dimension $d\in\NN$ such that for every symmetric convex body $G\subset\RR^d$ 
  and for all $f\in L^p(\RR^d)$ we have
  \begin{align}
\label{eq:119}
    \big\|V_r\big(M_{t}^Gf: t>0\big)\big\|_{L^p}\le C_{p, r}\|f\|_{L^p}.
  \end{align}
 The range for the parameter $p$
  in \eqref{eq:119} can be improved if we consider only long
  $r$-variations. Namely, for all
$p\in(1, \infty)$ and $r\in(2, \infty)$  we have
\begin{align}
\label{eq:44}
    \big\|V_r\big(M_{2^n}^Gf: n\in\ZZ\big)\big\|_{L^p}\le C_{p, r}\|f\|_{L^p}.
  \end{align}
Moreover, if $G=B^q$ for $q\in[1, \infty]$ and $B^q$ is a ball as in \eqref{eq:114}, then the inequality \eqref{eq:119} holds for all $p\in(1, \infty)$ and $r\in(2, \infty)$ with a constant $C_{p, q, r}>0$ independent of the dimension. 

The results have been encouraging enough to merit further investigation, especially in the discrete setup. 
Therefore, in the second part of the paper we will be concerned with estimating $r$-variations  for the discrete operators  $\mathcal M_t^Q$ over the cubes with bounds independent of the dimension as in \eqref{eq:119} and \eqref{eq:44}. 
The next theorem is a variational counterpart of Theorem \ref{thm:1}.
\begin{theorem}
\label{thm:10}
  Let $p\in(3/2, 4)$ and $r\in(2, \infty).$  Then there exists a
  constant $C_{p, r}>0$ independent of the dimension $d\in\NN$ and such that for every  $f\in \ell^p(\ZZ^d)$
 we have
  \begin{align}
\label{eq:86}
    \big\|V_r\big(\mathcal M_{t}^Qf: t>0\big)\big\|_{\ell^p}\le C_{p, r}\|f\|_{\ell^p}.
  \end{align}
\end{theorem}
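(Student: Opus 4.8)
The plan is to split the $r$-variation at the threshold $t \approx d$ that is already present in Theorem \ref{thm:8}, handling the ``large scales'' $t \gtrsim d$ by transference from the continuous estimate \eqref{eq:119} and the ``small scales'' $t \lesssim d$ by a separate, genuinely discrete argument. Concretely, writing $V_r = V_r(\mathcal M_t^Q f : t > 0)$, one uses the elementary splitting
\begin{equation*}
V_r(\mathcal M_t^Q f : t>0) \le V_r(\mathcal M_t^Q f : 0 < t \le c(Q)d) + V_r(\mathcal M_t^Q f : t \ge c(Q)d) + |\mathcal M_{c(Q)d}^Q f|,
\end{equation*}
where $c(Q) = 1/2$ for the cube. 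The last term is pointwise bounded by the maximal function from Theorem \ref{thm:1}, so it contributes $O(1)$ on $\ell^p$ for $p \in (3/2,\infty]$. For the long-scale term $t \ge c(Q)d$, I would run the comparison argument behind Theorem \ref{thm:8} at the level of $r$-variations: for $t$ in this range the discrete average $\mathcal M_t^Q f(x)$ is comparable, with dimension-free constants, to a continuous average $M_t^Q \tilde f(x)$ of a suitable lift $\tilde f$ of $f$ to $\RR^d$ (extend $f$ to be constant $f(x)$ on the unit cube $x + Q_{1/2}$); since $Q_{1/2} \subseteq tQ$ for $t \ge c(Q)d$, the discrete and continuous averages of $\tilde f$ over $Q_t$ differ by a controllable factor, and this comparison can be made uniformly in $t$ so that it passes through the variation seminorm. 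Then \eqref{eq:119} with $G = Q$ gives the bound for $p \in (3/2,4)$, $r \in (2,\infty)$, dimension-free.

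The heart of the matter is the short-scale piece $V_r(\mathcal M_t^Q f : 0 < t \le c(Q)d)$, equivalently (since $|Q_t \cap \ZZ^d|$ only changes when $\lfloor t \rfloor$ increments) the discrete variation $V_r(\mathcal M_N^Q f : N \in \NN,\ N \lesssim d)$ where $\mathcal M_N^Q$ averages over $\{-N,\dots,N\}^d$. Here I expect the paper to exploit the product structure of the cube: $\mathcal M_N^Q = A_N^{\otimes d}$, where $A_N$ is the one-dimensional average over $\{-N,\dots,N\}$ acting in a single coordinate. The standard route is to compare $\mathcal M_N^Q$ with its continuous model via a semigroup or a smoothing operator that also tensorizes, and to run a square-function / Rademacher–Menshov type decomposition for the variation. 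More precisely, one writes each $\mathcal M_N^Q$ as a main smooth part plus an error, where the smooth part is handled by a tensorized Littlewood–Paley / heat-semigroup argument with dimension-free constants (using that the multiplier $\prod_j m_N(\xi_j)$ of $\mathcal M_N^Q$ is close, in the relevant region of frequency space, to a function of $N^2|\xi|^2$ for which continuous Fourier-analytic variational estimates apply — this is exactly where the restriction $p \in (3/2,4)$, $r>2$ enters, via the known dimension-free $L^p$ bounds for variation of the heat or Poisson semigroup), and the error terms are summed using a Rademacher–Menshov inequality together with decay estimates for the difference of consecutive multipliers.

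The main obstacle will be obtaining the short-scale estimate with a constant independent of $d$: the naive bound for a single $\mathcal M_N^Q$ is dimension-free (it is an average), but controlling the \emph{variation} over the $O(d)$ relevant values of $N$ requires that the per-scale oscillation estimates decay fast enough in $N$ to beat the number of scales, uniformly in $d$. I would expect this to be achieved by the combination (i) a Gaussian-type pointwise bound comparing the kernel of $\mathcal M_N^Q$ (appropriately rescaled) with a product of one-dimensional bump functions, giving good control of $\|\mathcal M_N^Q - \mathcal M_{N'}^Q\|$ in terms of the underlying continuous operators, and (ii) the numerical-inequality machinery (Rademacher–Menshov for the ``high'' part, a continuous variational bound for the ``low'' part) that converts this into the $\ell^p$ estimate. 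Once both pieces are in place, \eqref{eq:86} follows by the triangle inequality for $V_r$, with the final range $p \in (3/2,4)$, $r \in (2,\infty)$ dictated by the short-scale (equivalently, the continuous semigroup variational) input.
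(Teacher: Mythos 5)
Your proposal does not follow the paper's route, and the key new ingredient you introduce---the split at the threshold $t\simeq c(Q)d$ with transference to the continuous estimate \eqref{eq:119} for the large scales---contains a genuine gap. The comparison behind Theorem~\ref{thm:8} is a one-sided pointwise \emph{domination}: it shows $\mathcal M_t^Q f(n)\le (1+6/d)^d\, M^Q_{t+2c(Q)}F(x)$ for $x\in n+Q_{1/2}$, with the discrete side evaluated at time $t$ and the continuous side at a shifted time $t+2c(Q)$, and with a bounded-but-not-close-to-$1$ constant. Such a comparison passes through $\sup_t$ (which is monotone) but does \emph{not} pass through the seminorm $V_r$: controlling $|\mathcal M_{t_1}^Qf-\mathcal M_{t_2}^Qf|$ requires bounds on \emph{differences}, and multiplicative comparability of each term separately tells you nothing about the difference. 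To make your plan rigorous you would need an estimate of the form $\big\| \big(\sum_t |\mathcal M_t^Q f - M_{s(t)}^Q F|^2\big)^{1/2}\big\|_{\ell^p}\lesssim\|f\|_{\ell^p}$ for some pairing $t\mapsto s(t)$, which is a much stronger statement and nowhere close to what Theorem~\ref{thm:8} provides. There is a second, subsidiary gap: \eqref{eq:119} gives $\big\|V_r(M_t^Q F)\big\|_{L^p(\RR^d)}\lesssim\|F\|_{L^p}$, but you need an $\ell^p(\ZZ^d)$ bound at the lattice points; passing from the $L^p(\RR^d)$ norm to the sum over $n\in\ZZ^d$ at the level of variations again requires approximate equality of $V_r(M_t^Q F(x))$ across $x\in n+Q_{1/2}$, uniformly in $t$, which you have not supplied.

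The paper does something structurally different and simpler: it never compares to the continuous operator and never splits at $t\simeq d$. Instead it splits the variation \emph{dyadically} into long and short pieces by \eqref{eq:29}, and works entirely on $\ZZ^d$. The long-variation piece is handled in Theorem~\ref{thm:11} by substituting the discrete Poisson semigroup $P_t$ (constructed in Section~\ref{sec:4} from the discrete Laplacian, and verified to be a symmetric diffusion semigroup, hence subject to the dimension-free Jones--Reinhold variational bound and the Stein Littlewood--Paley inequality) and controlling the square-function error $\big(\sum_n|(\mathcal M_{2^n}-P_{2^n})f|^2\big)^{1/2}$ via the multiplier estimates of Proposition~\ref{pro:FourEst} together with the Littlewood--Paley projections $S_n$. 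The short-variation piece \eqref{eq:36} is then reduced by the Rademacher--Menshov inequality of Lemma~\ref{lem:5} to the square-function bound \eqref{eq:38}, which is proved by interpolation: for $p\in(3/2,2]$ one adapts the almost-orthogonality scheme of Proposition~\ref{prop:2} (interpolating $A_N(1,1)\lesssim 2^l$ against a maximal bound at exponent $p$, then against an $L^2$ Plancherel estimate), and for $p\in[2,4)$ one uses a direct $\ell^{p/2}$ argument together with the vector-valued maximal inequality inherited from the lacunary Theorem~\ref{thm:0'}. Your small-scale discussion gestures at some of the same tools (tensor structure, Rademacher--Menshov, Littlewood--Paley) but is too vague to assess, and the restriction $N\lesssim d$ you impose is neither needed nor helpful---the paper's short-variation estimate is uniform over all $N\in\NN$ precisely because no comparison to $\RR^d$ is used.
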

Theorem \ref{thm:11} is a dyadic $r$-variational variant of Theorem \ref{thm:0}, and provides the sharp range of exponents for the parameters $p\in(1, \infty)$ and $r\in(2, \infty)$. 
\begin{theorem}
\label{thm:11}
 Let  $p\in(1, \infty)$ and $r\in(2, \infty)$. Then there exists a
  constant $C_{p, r}>0$ independent of the dimension $d\in\NN$ and such that for every  $f\in \ell^p(\ZZ^d)$
 we have
  \begin{align}
\label{eq:74}
    \big\|V_r\big(\mathcal M_{ 2^n}^Qf: n\in\NN_0\big)\big\|_{\ell^p}\le C_{p, r}\|f\|_{\ell^p}.
  \end{align}
  \end{theorem}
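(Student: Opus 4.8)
The plan is to decompose the dyadic averaging operators into a ``low part'' supported on small scales where the discrete kernel genuinely differs from the continuous one, and a ``high part'' at scales $2^n\ge c(Q)d=d/2$ which, by the comparison argument of Theorem \ref{thm:8}, is controlled by the corresponding continuous long $r$-variation from \eqref{eq:44}. So first I would split
\[
V_r\big(\mathcal M_{2^n}^Q f: n\in\NN_0\big)\le V_r\big(\mathcal M_{2^n}^Q f: 2^n< d\big)+V_r\big(\mathcal M_{2^n}^Q f: 2^n\ge d\big)+\big|\mathcal M_{2^{n_0}}^Q f\big|,
\]
where $n_0$ is the transition scale. The second term is handled by a transference/comparison lemma: for $t\ge c(Q)d$ the discrete average $\mathcal M_t^Q f$ is comparable (pointwise, up to the $e^6$-type factor produced in Theorem \ref{thm:8}) to a continuous average $M_t^Q$ of a suitable extension of $f$, and \eqref{eq:44} applied with $G=Q\subset\RR^d$ gives the dimension-free bound on $\ell^p$ for all $p\in(1,\infty)$ and $r\in(2,\infty)$. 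The real work is the first term, the variation over the finitely many scales $2^n$ with $1\le 2^n< d$.

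For the low part I would use the product structure of the cube. Writing $N=\lfloor 2^n\rfloor$, the kernel $\mathcal K_{2^n}^Q$ is the $d$-fold tensor product of the one-dimensional normalized indicator $u_N=\frac{1}{2N+1}\ind{[-N,N]\cap\ZZ}$, so $\mathcal M_{2^n}^Q=\bigotimes_{j=1}^d A_N^{(j)}$, where $A_N^{(j)}$ is the one-dimensional average acting in the $j$-th variable. The Fourier multiplier is $\prod_{j=1}^d D_N(\xi_j)$ with $D_N$ the normalized Dirichlet-type kernel $\frac{1}{2N+1}\frac{\sin((2N+1)\pi\xi)}{\sin(\pi\xi)}$. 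The strategy is to compare this with the semigroup-type object $e^{-2^{2n}\mathcal L}$ (heat or Poisson) governing the continuous Euclidean long-variation theory, and to run a Rademacher--Menshov square-function argument on the dyadic block decomposition: group the scales into lacunary blocks, control the ``short'' variation inside each block by an $\ell^2$ square function of consecutive differences, and control the ``long'' variation across blocks by the Euclidean estimates already available. The comparison of $\prod_j D_N(\xi_j)$ with a Gaussian $\prod_j e^{-c 2^{2n}\xi_j^2}$ on the region where all $\abs{\xi_j}\lesssim 2^{-n}$, together with a perturbation/multiplier argument (along the lines of the method in \cite{B3} for the maximal cube bound) extended to $r$-variations, should yield dimension-free bounds on $\ell^p$ for the full range $p\in(1,\infty)$; the restriction to $p>3/2$ in Theorem \ref{thm:10} comes only from the non-lacunary short-variation piece, which is absent here.

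The main obstacle, as usual with $r$-variations, is the passage from $\ell^2$-type square function estimates (which give $r=2$ formally but only for $p=2$) to all $p\in(1,\infty)$ with $r>2$ and \emph{no} dependence on $d$. Here I would invoke the dimension-free Euclidean long-variation inequality \eqref{eq:44} for the cube as a black box and reduce the discrete problem to it via the tensor-product multiplier comparison, so that the only genuinely $d$-dependent estimates to be controlled are the one-dimensional building blocks, which are trivially dimension-free. Concretely, the hard step is establishing a \emph{dimension-free} bound for the $r$-variation of the ``difference'' operators $\mathcal M_{2^n}^Q-e^{-c2^{2n}\mathcal L}$ over $2^n<d$; I expect this to follow from a careful Littlewood--Paley/Rademacher--Menshov decomposition in which the number of relevant frequency blocks is $O(\log d)$ but each contributes a geometrically decaying factor, so the sum telescopes to an absolute constant. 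Summing the two parts and the single-scale term $\abs{\mathcal M_{2^{n_0}}^Q f}\le Mf$ (the Hardy--Littlewood maximal operator, dimension-free on $\ell^p$ for the cube by Theorem \ref{thm:0} interpolated suitably, or directly) completes the proof of \eqref{eq:74}.
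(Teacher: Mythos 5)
Your approach has a genuine gap in the high‐scale piece. Theorem~\ref{thm:8} yields only a \emph{one‐sided pointwise domination} $\mathcal M_t^Q f(n)\le e^6\, M_{t+2c(Q)}^Q F(x)$ for $f\ge0$, $x\in n+Q_{1/2}$ and $t\ge c(Q)d$. That is exactly the right shape to dominate a supremum of a positive quantity, and so gives~\eqref{eq:trEu1}, but it does \emph{not} control an $r$-variation. The seminorm $V_r$ is built from differences $\mathcal M_{t_{j+1}}^Q f-\mathcal M_{t_j}^Q f$, which change sign, and a bound of the form $0\le A_t\le C B_{t+2c(Q)}$ gives no information whatsoever about $|A_{t_1}-A_{t_2}|$ in terms of the $B$'s (even a two-sided comparison $cB_t\le A_t\le CB_t$ would fail, and here in addition the continuous scale is shifted by $2c(Q)$, so you would not even be comparing $V_r$ over the same index set). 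So you cannot deduce the discrete long $r$-variation bound from the continuous one~\eqref{eq:44} via the comparison principle; this step would need to be replaced entirely. A secondary point: the low-scale part of your plan is also underspecified, and the remark about "lacunary blocks'' and "short variations inside blocks'' is out of place here, since the index set $\{2^n\}$ is already lacunary and Theorem~\ref{thm:11} is precisely the long‐variation estimate — there is no short variation to handle in this theorem.

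The paper avoids both difficulties by never leaving the discrete world. It introduces the discrete Poisson semigroup $P_t=e^{-t\mathcal L^{1/2}}$ on $\ZZ^d$, checks it is a symmetric diffusion semigroup, and invokes the Jones--Reinhold dimension-free $r$-variational inequality for such semigroups to control $\|V_r(P_{2^n}f:n\in\NN_0)\|_{\ell^p}$ for all $p\in(1,\infty)$, $r\in(2,\infty)$. The remaining error is dominated by the $\ell^2(\NN_0)$-square function $\big(\sum_n|(\mathcal M_{2^n}-P_{2^n})f|^2\big)^{1/2}$, which is then bounded dimension-freely by summing over the Littlewood--Paley pieces $S_{j+n}f$: one interpolates an $\ell^p$ vector-valued bound (coming from the lacunary maximal Theorem~\ref{thm:0'} and the semigroup maximal function, plus the Littlewood--Paley inequality~\eqref{eq:98}) against an $\ell^2$ bound with geometric gain $2^{-|j|/2}$ obtained via Plancherel and the multiplier estimates of Proposition~\ref{pro:FourEst}. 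This is a single, scale-uniform argument; no low/high split or transference to $\RR^d$ is used, and that is precisely why it delivers the full range $p\in(1,\infty)$ cleanly. If you want to salvage your plan, you would need to replace the comparison step with a genuine multiplier comparison that works at the level of $V_r$ — which is essentially what the paper's $\mathcal M_{2^n}$ versus $P_{2^n}$ comparison is.
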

The range for parameter $r\in(2, \infty)$ in Theorem \ref{thm:10} and   \ref{thm:11} is sharp, see for instance  \cite{JSW}. Dimension dependent versions of Theorem \ref{thm:10} and  \ref{thm:11}, with sharp ranges of parameters $p\in(1, \infty)$ and $r\in(2, \infty)$, may be easily proven using the methods of the paper.

  Finally some applications of Theorem \ref{thm:10} and Theorem
  \ref{thm:11} will be discussed.  These $r$-variational results have
  a natural ergodic theoretical interpretation and will be discussed in the next paragraph.

\subsection{Applications}  Let
$(X, \mathcal B(X), \mu)$ be a $\sigma$-finite measure space with
a family of commuting and invertible measure-preserving  transformations
$T_1, T_2, \ldots, T_d$
which map $X$ to itself. For every  $f\in L^1(X)$ we define
the ergodic Hardy--Littlewood averaging operator by setting
\begin{align}
  \label{eq:118}
      \mathcal A_t^Gf(x)=\frac{1}{|G_t\cap \ZZ^d|}\sum_{y\in G_t\cap\ZZ^d}f\big(T_1^{y_1}\circ T_2^{y_2}\circ\ldots\circ T_d^{y_d}x\big).
  \end{align}
The operator $\mathcal A_t^G$ can be thought of as an ergodic counterpart of $\mathcal M_t^G$. Indeed, it suffices to take $X=\ZZ^{d}$,
$\mathcal B(\ZZ^d)$ the $\sigma$-algebra of all subsets of
$\ZZ^{d}$, $\mu=|\cdot|$ to be the counting
measure on $\ZZ^{d}$ and
$S_{j}^y:\ZZ^{d} \rightarrow \ZZ^{d}$ the shift
operator acting of $j$-th coordinate,
i.e. $S_{j}^y(x)=x-ye_{j}$ for all $j\in\{1, \ldots, d\}$ and
$y\in\ZZ$, where $e_{j}$ is the $j$-th basis vector from
the standard basis in $\ZZ^{d}$. 

For the operators $\mathcal A_{N}^Q$ defined over the cubes we also have dimension-free $r$-variational estimates.

\begin{theorem}
\label{thm:12}
 Let $p\in(3/2, 4)$ and $r\in(2, \infty).$  Then there exists a
  constant $C_{p, r}>0$ independent of the dimension $d\in\NN$ such that for all  $f\in L^p(X)$ the following inequality
holds
  \begin{align}
\label{eq:120}
    \big\|V_r\big(\mathcal A_{N}^Qf: N\in\NN\big)\big\|_{L^p}\le C_{p, r}\|f\|_{L^p}.
  \end{align}
 Moreover, if we  consider only long variations, then \eqref{eq:120} remains true for all 
  $p\in(1, \infty)$ and $r\in(2, \infty)$ and we have
 \begin{align}
\label{eq:121}
    \big\|V_r\big(\mathcal A_{2^n}^Qf: n\in\NN_0\big)\big\|_{L^p}\le C_{p, r}\|f\|_{L^p}.
  \end{align} 
\end{theorem}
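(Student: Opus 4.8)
The plan is to deduce Theorem~\ref{thm:12} from Theorem~\ref{thm:11} by a standard transference argument, plus a second, elementary transference that passes from the integer lattice $\ZZ^d$ with its shifts to a general $\sigma$-finite measure space equipped with commuting measure-preserving transformations $T_1,\dots,T_d$. First I would record the observation already made in the excerpt: the discrete operator $\mathcal M_{2^n}^Q$ on $\ell^p(\ZZ^d)$ is \emph{itself} the ergodic average $\mathcal A_{2^n}^Q$ for the special system $(X,\mathcal B(X),\mu)=(\ZZ^d,2^{\ZZ^d},|\cdot|)$ with $T_j=S_j$ the coordinate shifts. So the content to be transferred is that $\mathcal M_{2^n}^Q$ is a convolution operator on $\ZZ^d$ built from the finitely supported probability measures $\mathcal K_{2^n}^Q$, and the $r$-variational inequality \eqref{eq:74} holds for it with a dimension-free constant $C_{p,r}$.

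The key steps, in order. (1) Fix $d$ and a system $(X,\mathcal B(X),\mu,T_1,\dots,T_d)$. For each $x\in X$ build the "return" map: restrict $f\in L^p(X)$ to orbits and compare $\mathcal A_{2^n}^Q f$ with the action of $\mathcal M_{2^n}^Q$ on $\ZZ^d$. Concretely, for $N\in\NN$ and a large parameter $R$, localize to a box $\{-R,\dots,R\}^d\subset\ZZ^d$: define $F_x\in\ell^p(\ZZ^d)$ by $F_x(y)=f(T_1^{y_1}\cdots T_d^{y_d}x)\ind{\{|y_j|\le R\}}$. For $y$ in the smaller box $\{-(R-2^nd),\dots,(R-2^nd)\}^d$ one has the pointwise identity $\mathcal A_{2^n}^Q f\big(T_1^{y_1}\cdots T_d^{y_d}x\big)=\mathcal M_{2^n}^Q F_x(y)$, because the support $Q_{2^n}\cap\ZZ^d$ is contained in $\{|z_j|\le 2^n d\}$ (indeed in $\{|z_j|\le 2^n\}$, even better) and so the truncation by $R$ does not interfere. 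Since $r$-variation is a seminorm evaluated coordinatewise, the same identity holds for $V_r$.

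(2) Apply \eqref{eq:74} to $F_x$ for each fixed $x$, sum over $y$ in the inner box, then integrate $\dif\mu(x)$ using the measure-preserving property of the $T_j$'s, exactly as in the classical Calder\'on transference principle. This yields
\[
(2R-2^{n_{\max}}d+1)^d\int_X \big\|V_r(\mathcal A_{2^n}^Q f:n\le n_{\max})\big\|_{L^p}^p\,\dif\mu \le C_{p,r}^p\,(2R+1)^d\,\|f\|_{L^p(X)}^p,
\]
where $n_{\max}$ runs over a finite truncation of $\NN_0$. Letting $R\to\infty$ with $n_{\max}$ fixed kills the geometric defect, and then letting $n_{\max}\to\infty$ together with the monotone convergence theorem gives \eqref{eq:121} with the same dimension-free constant $C_{p,r}$. (Technically one first proves it for $f\in L^p\cap L^\infty$ with support of finite measure, where the truncations are harmless, then extends by density; in the $\sigma$-finite case this density argument is routine.) The continuous-scale statement \eqref{eq:120} is obtained in exactly the same way, transferring Theorem~\ref{thm:10} (inequality \eqref{eq:86}) instead of Theorem~\ref{thm:11}; note that for $\mathcal M_t^Q$ the variation is effectively over $t\in\NN$ since $Q_t\cap\ZZ^d$ depends only on $\lfloor t\rfloor$, so there are no measurability issues, and the same box-localization works because $Q_N\cap\ZZ^d\subseteq\{|z_j|\le N\}$.

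The main obstacle is bookkeeping rather than conceptual: one must make sure that the dimension-dependent loss introduced by the localization box --- the ratio $(2R+1)^d/(2R-2^{n_{\max}}d+1)^d$ --- is genuinely sent to $1$ \emph{before} any dimension-dependent quantity is frozen, so that the final constant inherits the dimension-freeness of $C_{p,r}$ from Theorem~\ref{thm:11}. This is fine provided we first truncate the variation to finitely many scales $n\le n_{\max}$ (so that $2^{n_{\max}}d$ is a fixed finite number), push $R\to\infty$, and only afterwards remove the truncation in $n$ by monotone convergence; since $V_r(\mathfrak a_t:t\in Z)$ is an increasing limit of $V_r$ over finite subsets $Z'\subseteq Z$, this last step is automatic. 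A secondary technical point is the $\sigma$-finiteness of $X$: one reduces to a finite measure space by exhausting $X$ with sets of finite measure on which the transformations need not act, but because the inequality is local in $x$ after the transference identity in step~(1), this causes no difficulty. No step here requires new harmonic analysis; everything rests on Theorems~\ref{thm:10} and \ref{thm:11} together with the Calder\'on transference principle for $\ZZ^d$-actions.
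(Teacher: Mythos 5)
Your proposal is correct and follows essentially the same route as the paper: Theorem~\ref{thm:12} is deduced from Theorem~\ref{thm:10} and Theorem~\ref{thm:11} by Calder\'on transference via the box-localization argument that the paper codifies as Proposition~\ref{prop:3}. The only cosmetic difference is in how the geometric defect is killed — you truncate the set of scales to $n\le n_{\max}$, send $R\to\infty$, then remove the truncation by monotone convergence, whereas the paper keeps all scales $t<R\varepsilon/d$ (growing with $R$), sends $R\to\infty$ to get the factor $(1+\varepsilon/d)^d\le e^\varepsilon$, and then lets $\varepsilon\to0^+$; both variants yield the same dimension-free constant.
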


In Proposition \ref{prop:3} we provide a transference principle, which
allows us to derive inequalities \eqref{eq:120}, \eqref{eq:121} from
the corresponding estimates in \eqref{eq:86} and \eqref{eq:74}
respectively. Now two remarks are in order. Firstly, the remarkable
feature of Theorem \ref{thm:12} is that the implied bounds in
\eqref{eq:120}, \eqref{eq:121} are independent of the number of
underlying  transformations
$T_1,\ldots,T_d$.  Secondly, for the operators $\mathcal A_t^Qf$,
which are defined on an abstract measure space, there is no obvious
way how to find a candidate for a dense class to establish pointwise
convergence.  Fortunately, due to the properties of $r$-variational
seminorm we immediately know that the limit
$\lim_{t\to\infty}\mathcal A_t^Qf(x)$ exists almost everywhere on $X$
for every $f\in L^p(X)$ and the desired conclusion follows directly.

\subsection{Overview of the methods}
We shall briefly outline the strategy for proving our main results.
The first step in the proofs of Theorem \ref{thm:1} and Theorem
\ref{thm:0} will rely to a large extent on an adaptation of Carbery's
almost orthogonality principle \cite[Theorem 2]{Car1}, which is stated
as Proposition \ref{prop:2} in the paper. In the second step we are
reduced to verify the  assumptions of Proposition \ref{prop:2}. In order to
 do this we have to construct a suitable symmetric
diffusion semigroup $P_t$, provide dimension-free estimates for the
multiplier $\mathfrak m_t^{Q}$ corresponding to the operator
$\mathcal M_t^{Q}$ and finally we have to control the maximal function
$\sup_{2^n\le t<2^{n+1}}|\mathcal M_t^{Q}f|$ on $\ell^p(\ZZ^d)$ in a
certain range of $p$'s.  However, due to the discrete nature of our
questions, the methods employed in the continuous setting in
\cite{B3}, \cite{Car1}, and \cite{Mul1} for verifying underlying
assumptions do not easily adapt to the discrete setting.

Fortunately, for the operators $\mathcal M_t^{Q}$ 
over the cubes in $\ZZ^d$ we will be able to obtain the desired conclusions.
We begin by constructing  a suitable symmetric diffusion semigroup $P_t$ introduced in Section \ref{sec:4}. The semigroup $P_t$ in our case corresponds to the discrete Laplacian on $\ZZ^d$, and provides maximal and $r$-variational estimates and the Littlewood--Paley theory with bounds independent of the dimension, which one obtains by appealing to the general   
theory of symmetric diffusion semigroups in the sense of \cite[Chapter III]{Ste1}.

Further, we have to understand the behavior of the multiplier
$\mathfrak m_t^{Q}$ associated with the operator
$\mathcal M_t^{Q}$. This in turn is an exponential sum, which is the
product of one dimensional Dirichlet's kernels. The explicit formula
for $\mathfrak m_t^{Q}$ in terms of the Dirichlet kernels is essential
for the further calculations and allows us to furnish the bounds
independent of the dimension as described in \eqref{eq:4}. The
inequalities in \eqref{eq:4} are based on elementary estimates, which
are interesting in their own right. For this reason our method does
not extend to discrete convex bodies other than $Q$.  This is the
second place which sets the operators $\mathcal M_t^{Q}$ over the
cubes apart from the operators $\mathcal M_t^{B^q}$ for $q\in[1, \infty)$,
where $B^q$ is a ball as in \eqref{eq:114}. The multiplier
$\mathfrak m_t^{B^q}$ associated with the operator
$\mathcal M_t^{B^q}$ is again an exponential sum, however the absence
of the product structure makes the estimates incomparably harder. The
estimates for $\mathfrak m_t^{B^q}$, which are a part of the ongoing
project \cite{BMSW2}, are based on delicate combinatorial arguments,
which differ completely from the methods of estimates for
$\mathfrak m_t^{Q}$ provided in Section \ref{sec:3}.

The crucial new ingredient we shall use is a numerical variant of
Radmeacher--Menshov inequality, as in \cite{BMSW1}, which asserts that
for every $n\in\NN_{0}$ and for every function
$\mathfrak a:[2^n, 2^{n+1}]\cap\NN\to\CC$ and $r\ge1$ we have
        \begin{align}
          \label{eq:42}
          \begin{split}
            	\sup_{2^n\le t< 2^{n+1}}|\mathfrak a(t)-\mathfrak a(2^n)|&\le V_r\big(\mathfrak a(t): t\in[2^n, 2^{n+1})\big)\\
		&\le
		2^{1-1/r}
		\sum_{0\le l\le n}
		\Big(
		\sum_{k = 0}^{2^{l}-1}
		\big|\mathfrak a(2^n+{2^{n-l}(k+1)}) -\mathfrak a(2^n+{2^{n-l}k})\big|^r
		\Big)^{1/r}.
          \end{split}
        \end{align}
        Inequality \eqref{eq:42} replaces the fractional integration
        argument from \cite{Car1} (as it is not clear if this argument is  
        available in the discrete setting) and allows us to obtain
        \eqref{eq:68} for $p\in(3/2, 2]$.  A variant of this
        inequality was proven by Lewko--Lewko \cite[Lemma 13]{LL} in
        the context of variational Rademacher--Menshov type results
        for orthonormal systems and it was also obtained independently
        by the second author and Trojan \cite[Lemma 1]{MT1} in the
        context of variational estimates for discrete Radon
        transforms, see also \cite{MTS1}.  Inequality \eqref{eq:42}
        reduces estimates for a supremum or an $r$-variation
        restricted to a dyadic block to the situation of certain
        square functions, where the division intervals over which
        differences are taken (in these square functions) are all of
        the same size. Inequality \eqref{eq:42}, combined with the
        estimates from \eqref{eq:4}, is an invaluable tool in
        establishing the following maximal bound
        \begin{align}
          \label{eq:50}
              \sup_{n\in\NN_0}\big\|\sup_{2^n\le t< 2^{n+1}}|\mathcal M_t^Qf|\big\|_{\ell^p}\le  C_p\|f\|_{\ell^p}
        \end{align}
for all $f\in\ell^p(\ZZ^d)$ and $p\in(3/2, \infty]$ with some constant $C_p>0$, which does depend on the  dimension.

Gathering now all together and invoking Proposition \ref{prop:2} and
dimension-free Littlewood--Paley inequality from \eqref{eq:98} we may
extend inequality \eqref{eq:50} to the full maximal inequality
\eqref{eq:68} for all $p\in(3/2, \infty]$, with the implied bound
which does not depend on $d\in\NN$. In the dyadic case we do not need
to prove inequality \eqref{eq:50} and this is loosely speaking the
reason why we obtain \eqref{eq:87'} for all $p\in(1, \infty]$.  It is
worth emphasizing that the method described above can be used to
obtain \eqref{eq:113} and \eqref{eq:115} without appealing to the
fractional integration method.

The approach undertaken in this paper is robust enough to provide
$r$-variational dimension-free estimates for the operators
$\mathcal M_t^Q$. We now briefly outline the key steps for proving
Theorem \ref{thm:10} and \ref{thm:11}.

We first split the consideration into long and short variations as in
\eqref{eq:29}.  The long variations \eqref{eq:30} are handled in Theorem
\ref{thm:11} by invoking the dimension-free estimates for
$r$-variations of the semigroup $P_t$. We refer to \cite[Theorem
3.3]{JR1} or \cite[inequality (2.30)]{BMSW1} for more details. To
establish Theorem \ref{thm:11} it remains to control the error term,
which is handled by the square function methods, and the
Littlewood--Paley theory, see \eqref{eq:31}.

The analysis of short variations \eqref{eq:36} breaks into two cases, whether
$p\in[2, 4)$ or $p\in(3/2, 2]$. In the first case for $p\in[2, 4)$ we
use  the square function methods, and the Littlewood--Paley theory and
reduce the estimates  basically to Theorem \ref{thm:0}. In the second
case for $p\in(3/2, 2]$ we proceed actually very much in the spirit of
the proof of Theorem \ref{thm:1}. Namely, we rely on 
inequality \eqref{eq:42} and adapt the methods of the proof of
Proposition \ref{prop:2} to the $r$-variational case, in fact with $r=2$, which 
is suited to an application of the Fourier transform techniques with estimates from \eqref{eq:4}.

There is a natural question which now arises. Is it possible to extend
the range of $p$'s in Theorem \ref{thm:1} to $p\in(1,3/2]$? For the
maximal function associated with the operators $M_t^Q$ over the cubes in
$\RR^d$ given by \eqref{eq:112} this was accomplished in
\cite{B3}. However, how to do this for $p\in(1,3/2]$ in the discrete
case is not obvious. There are two ingredients, which were employed in
\cite{B3}, that seem to fail in the discrete case. Firstly, it is not
clear if there is a satisfactory counterpart of the theory of
fractional integration in the discrete setup. The idea of fractional
integration was very fruitful and strongly exploited in \cite{Car1},
\cite{Mul1} and \cite{B3}.  Secondly, in \cite{Mul1} and \cite{B3} one
of the key points is based on the dimension-free estimates for the
Riesz transforms \cite{SteinRiesz}. However, Lust--Piquard
\cite{Lu_Piqu1} proved that the discrete Riesz transforms, which
naturally arise in the context of the discrete Laplacian on $\ZZ^d$,
do not have dimensions-free bounds on $\ell^p(\ZZ^d)$ for
$p\in(1, 2)$. In the Appendix we quantify the failure of this dimension-free dependence. 

A similar question concerns the estimates of $r$-variations for the
operators $\mathcal M_t^Q$. We would like to know whether inequality
\eqref{eq:119} can be extended to $p\in(1,3/2]$ or $p\in[4,
\infty)$. Here the situation is even more complicated since we cannot
interpolate with $p=\infty$ as we did in the case of maximal
estimates, so the case for $p\in[4, \infty)$ must be treated
separately.  However, we know \cite{BMSW1} that the operators $M_t^Q$ over the
cubes in $\RR^d$ given by \eqref{eq:112} do have  the dimension-free estimates for $r$-variations on
$L^p(\RR^d)$  for all $p\in(1, \infty)$ and $r\in(2, \infty)$.

The results of \cite{B3} and \cite{BMSW1}, Theorem \ref{thm:9} and the
counterexample of Lust--Piquard \cite{Lu_Piqu1} are certainly
encouraging to understand the situation better and continue further
study of $\mathcal M_t^Q$, which together with $\mathcal M_t^{B^2}$,
is the most natural setting for the discrete Hardy--Littlewood maximal
functions.  We hope to return to these questions in the near future.

\subsection{Notation}
\begin{itemize}
\item Throughout the whole paper $d\in\NN$ will denote the dimension and
$C>0$ will be an absolute constant which does not depend on the
dimension, however it may change  from line  to line.
\item For two real numbers $A, B$ we will write $A \lesssim_{\delta} B$
($A \gtrsim_{\delta} B$) to say that there is an absolute constant
$C_{\delta}>0$ (which possibly depends on $\delta>0$) such that
$A\le C_{\delta}B$ ($A\ge C_{\delta}B$).  We will write
$A \simeq_{\delta} B$ when $A \lesssim_{\delta} B$ and
$A\gtrsim_{\delta} B$ hold simultaneously.
\item 
Let $\NN:=\{1,2,\ldots\}$ be the set of positive integers and $\NN_0:= \NN\cup\{0\}$.

\item 
The Euclidean space $\RR^d$
is endowed with the standard inner product
\[
x\cdot\xi:=\sum_{k=1}^dx_k\xi_k
  \]
for every $x=(x_1,\ldots, x_d)$ and $\xi=(\xi_1, \ldots,
\xi_d)\in\RR^d$. Sometimes we will also write $\langle x, \xi\rangle:=x\cdot\xi$.
\item We will consider two norms on $\RR^d$. For every $x\in\RR^d$
\begin{align*}
  |x|=|x|_2:=\sqrt{x\cdot x} \qquad \text{and} \qquad
  |x|_{\infty}:=\max_{1\le k\le d}|x_k|.
\end{align*}

\item For a countable set $\mathcal Z$ endowed with the counting measure we will
write for any $p\in[1, \infty]$ that
\[
\ell^p(\mathcal Z):=\{f:\mathcal Z\to \CC: \|f\|_{\ell^p(\mathcal Z)}<\infty\},
\]
where for any $p\in[1, \infty)$ we have
\begin{align*}                    
  \|f\|_{\ell^p(\mathcal Z)}:=\Big(\sum_{m\in\mathcal Z}|f(m)|^p\Big)^{1/p} \qquad \text{and} \qquad
  \|f\|_{\ell^{\infty}(\mathcal Z)}:=\sup_{m\in\mathcal Z}|f(m)|.
\end{align*}
In our case usually $\mathcal Z=\ZZ^d$.

\item Let $(X, \mathcal B(X), \mu)$ be a $\sigma$-finite measure space. Let $p\in[1, \infty]$ and suppose that $(T_t: t\in Z)$ is a family of linear operators such that $T_t$ maps $L^p(X)$ to itself for every $t\in Z\subseteq (0, \infty)$. Then the corresponding maximal function will be denoted by
\[
T_{*, Z}f:=\sup_{t\in Z}|T_tf| \quad \text{for every}\quad f\in L^p(X).
\]
We will abbreviate $T_{*, Z}$ to $T_{*}$ if $Z=(0, \infty)$. We use the convention that $T_{*,\emptyset}=0.$
\item Let $(B_1, \|\cdot\|_{B_1})$ and $(B_2, \|\cdot\|_{B_2})$ be Banach spaces. For a linear or sub-linear operator $T: B_1\to B_2$ its norm is defined by
\[
\|T\|_{B_1\to B_2}:=\sup_{\|f\|_{B_1}\le1}\|T(f)\|_{B_2}.
\]

\item Let $\calF$ denote the Fourier transform on $\RR^d$ defined for any function 
$f \in L^1\big(\RR^d\big)$ and for any $\xi\in\RR^d$ as
$$
\calF f(\xi) := \int_{\RR^d} f(x) e^{2\pi i \sprod{\xi}{x}} {\rm d}x.
$$
If $f \in \ell^1\big(\ZZ^d\big)$ we define the discrete Fourier
transform by setting
$$
\hat{f}(\xi) := \sum_{x \in \ZZ^d} f(x) e^{2\pi i \sprod{\xi}{x}}.
$$
for any $\xi\in \TT^d$, where $\TT^d$ denote $d$-dimensional torus which will be  identified with
$[-1/2, 1/2)^d$.

\item To simplify notation we denote by $\mathcal F^{-1}$ the inverse Fourier transform on $\RR^d$
or the inverse Fourier transform (Fourier coefficient) on the torus $\TT^d$.
It will cause no confusions and the meaning will be always clear from the
context. 
\end{itemize}

\section*{Acknowledgements}
The authors are grateful to the referees for careful reading of the manuscript and useful remarks
that led to the improvement of the presentation.

\section{Proofs of Theorem \ref{thm:8} and Theorem \ref{thm:9}}
\label{sec:2}
In this section we prove Theorem \ref{thm:9}, which shows that the phenomenon of
dimension-free estimates in the discrete setting may be completely
different from that what we have seen so far in the continuous setting.
However, we begin with
the observation which shows that the dimension-free estimates for the
discrete Hardy--Littlewood maximal functions are only interesting  if
the supremum is taken over  small scales. The case  when
the supremum is taken over large scales can be easily deduced from
the corresponding continuous estimates by a
comparison principle described in Theorem \ref{thm:8}.

\subsection{Comparison  principle}
For a closed symmetric convex body $G\subset \RR^d$ we defined in \eqref{eq:cg} the constant
\[
c(G):=\inf\{t>0\colon Q_{1/2}\subseteq tG\},
\]
where $Q_{1/2}=[-1/2,1/2]^d$.  We now prove Theorem \ref{thm:8}, which will allow us to transfer  dimension-free estimates
(for large scales described in terms of $c(G)$) between discrete and continuous settings.

\begin{proof}[Proof of Theorem \ref{thm:8}]
For any $f\in\ZZ^d$ we define its extension $F$ on $\RR^d$ by setting
$$F(x):=\sum_{n\in\ZZ^d}f(n)\ind{Q_{1/2}}(x-n)$$
for every $x\in\RR^d$.
Observe that $F(n)=f(n)$ for $n\in\ZZ^d$ and $F\in L^p(\RR^d)$ if
and only if $f\in\ell^p(\ZZ^d)$ with
$\|F\|_{L^p(\RR^d)}=\|f\|_{\ell^p(\ZZ^d)}$ for every $p\ge1$.

Without loss of generality we assume that $f\ge0$, hence $F\ge0$. We
show that for every $t\ge c(G)d$, $d\ge2$ and every $x\in n+Q_{1/2}$ we have
\begin{align}
  \label{eq:89}
  \mathcal M_t^Gf(n)\le \left(1+\frac{6}{d}\right)^d M_{t+2c(G)}^G F(x).
\end{align}
Clearly, this establishes \eqref{eq:trEu1}.
We now focus on \eqref{eq:89}. For $x\in\RR^d$ we denote by $|x|_G$
the Minkowski norm corresponding to $G\subset \RR^d$, i.e.
$$|x|_G:=\inf\{t>0\colon t^{-1}x\in G\}.$$
Then the formula \eqref{eq:cg} may be rephrased as 
\begin{equation*}
c(G)=\sup_{s\in Q_{1/2}}|s|_G.
\end{equation*}
Assume that  $x\in n+ Q_{1/2}$, then we have
\begin{align}
  \label{eq:41}
  \begin{split}
  \mathcal M_t^Gf(n)&
=\frac{1}{|G_t\cap \ZZ^d|}\sum_{m\in\ZZ^d:|n-m|_{G}\le t}f(m)\\
&=\frac{1}{|G_t\cap \ZZ^d|}\sum_{m\in\ZZ^d: |n-m|_{G}\le
	t}\int_{m+Q_{1/2}}F(s)\dif s\\
&\le \frac{1}{|G_t\cap \ZZ^d|}\int_{|x-s|_{G}\le t+2c(G)}F(s)\dif s,
  \end{split}
\end{align}
since, if $|n-m|_G\le t $ and $|s-m|_{\infty}\le1/2, $ then
$$|x-s|_G\le |n-s|_G+|x-n|_G\le |n-m|_G+|s-m|_G+|x-n|_G\le t+2c(G).  $$   

We claim, for $t> c(G)$, that
\begin{equation}
\label{eq:bcomp1}
|G_{t-c(G)}|\le |G_t\cap \ZZ^d|.
\end{equation}
Indeed, if $|s|_G\le t-c(G)$ and $|s-n|_{\infty}\le 1/2,$ then 
$$|n|_G\le |s|_G+|s-n|_G\le t$$ and consequently, we have 
\begin{align*}
|G_{t-c(G)}|=\sum_{n\in \ZZ^d} \int_{n+Q_{1/2}}\ind{G_{t-c(G)}}(s)\dif s\le \sum_{n\in\ZZ^d: |n|_G\le t} \int_{n+Q_{1/2}}1\dif s\le  |G_t\cap \ZZ^d|.
\end{align*}
Hence, using \eqref{eq:41} and \eqref{eq:bcomp1} we obtain for $n\in \ZZ^d$ and  $x\in n+Q_{1/2}$ that 
\begin{equation}
\label{eq:expcomp}
\begin{split}
  \mathcal M_t^Gf(n)&\le \frac{1}{|G_{t-c(G)}|}\int_{|x-s|_{G}\le t+2c(G)}F(s)\dif s\\
  &=\left(\frac{t+2c(G)}{t-c(G)}\right)^dM_{t+2c(G)}^G F(x)\\
                    &=\left(1+\frac{3c(G)}{t-c(G)}\right)^d M_{t+2c(G)}^G F(x)\\
  &\le\left(1+\frac{3}{d-1}\right)^d M_{t+2c(G)}^G F(x).
\end{split}
\end{equation}
The above implies \eqref{eq:89}, hence, \eqref{eq:trEu1}  is proved. We remark that the only place in the proof where the assumption $t\ge c(G)d$ is used is the last inequality in \eqref{eq:expcomp}.
\end{proof}

As a corollary of Theorem \ref{thm:8} we obtain  dimension-free estimates for the maximal functions over  large scales associated with the Hardy--Littlewood averaging operators $\mathcal M_t^{B^q}$ for $q\in[1, \infty]$, where $B^q$ is a ball as in \eqref{eq:114}.
\begin{Corollary}
\label{cor:trEu1}
Given $p\in(1, \infty]$ and $q\in[1, \infty]$, there is a constant $C_{p,q}>0$ such that  for all $f\in\ell^p(\ZZ^d)$ we have
	\begin{equation*}
	\big\|\sup_{t\ge d^{1+1/q}}|\mathcal M_t^{B^q} f|\big\|_{\ell^p}\leq C_{p,q} \|f\|_{\ell^p},
	\end{equation*}
	and the implied constant $C_{p,q}$ is independent of the dimension $d$.
\end{Corollary}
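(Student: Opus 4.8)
The plan is to read this off directly from the comparison principle, Theorem \ref{thm:8}, applied with $G=B^q$. Two things need to be checked: an upper bound for the constant $c(B^q)$, and the availability of a dimension-free maximal bound for $M_t^{B^q}$ on $L^p(\RR^d)$.

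First I would compute $c(B^q)$. Using the reformulation $c(G)=\sup_{s\in Q_{1/2}}|s|_G$ established in the proof of Theorem \ref{thm:8}, together with the fact that the Minkowski functional of $B^q$ is exactly the $\ell^q$-norm $|\cdot|_q$, one gets $c(B^q)=\sup_{s\in Q_{1/2}}|s|_q=|(1/2,\dots,1/2)|_q$, which is $\tfrac12 d^{1/q}$ for $q\in[1,\infty)$ and $\tfrac12$ for $q=\infty$. In all cases $c(B^q)\le d^{1/q}$, hence $c(B^q)d\le d^{1+1/q}$, so that $\sup_{t\ge d^{1+1/q}}|\mathcal M_t^{B^q}f|\le \sup_{t\ge c(B^q)d}|\mathcal M_t^{B^q}f|$ pointwise.

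Next, for $p\in(1,\infty)$ I would apply Theorem \ref{thm:8} with $G=B^q$ to bound the $\ell^p(\ZZ^d)$ norm of $\sup_{t\ge c(B^q)d}|\mathcal M_t^{B^q}f|$ by $e^6\,\|M_*^{B^q}\|_{L^p(\RR^d)\to L^p(\RR^d)}\|f\|_{\ell^p}$. It then only remains to invoke the known dimension-free bound for the continuous Hardy--Littlewood maximal operator over $\ell^q$-balls, namely inequality \eqref{eq:113} with $G=B^q$: this holds for all $p\in(1,\infty]$ with a constant $C_{p,q}$ independent of $d$, by \cite{Mul1} for $q\in[1,\infty)$ and by \cite{B3} for $q=\infty$. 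Combining these two facts gives the claimed estimate, after renaming the constant. The case $p=\infty$ is trivial since $\mathcal M_t^{B^q}$ is an averaging operator, and the single dimension $d=1$ (for which Theorem \ref{thm:8} was stated with the harmless restriction $d\ge2$) can be absorbed into $C_{p,q}$ using the standard $\ell^p(\ZZ)$-boundedness of the one-dimensional Hardy--Littlewood maximal operator.

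There is no genuine obstacle here: the corollary is essentially a bookkeeping consequence of Theorem \ref{thm:8}. The only points requiring a little care are the elementary verification that the threshold $d^{1+1/q}$ indeed dominates $c(B^q)d$, and making sure the continuous dimension-free maximal inequality for the $\ell^q$-balls is quoted in the form actually needed (all $p\in(1,\infty]$, constant independent of the dimension).
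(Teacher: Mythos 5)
Your proof matches the paper's argument: both compute $c(B^q)=\tfrac12 d^{1/q}$, feed this into Theorem~\ref{thm:8}, and invoke the dimension-free continuous bound from \cite{Mul1} and \cite{B3}. You are in fact a bit more careful than the paper about two edge cases the paper glosses over --- the $p=\infty$ case (where Theorem~\ref{thm:8} as stated does not directly apply) and the harmless $d=1$ restriction inherited from the proof of Theorem~\ref{thm:8} --- but the underlying route is identical.
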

\begin{proof}
By \cite{Mul1} and \cite{B3} we know that for all $p\in(1, \infty]$ and $q\in[1, \infty]$ 
	$$\big\|M_*^{B^q}\big\|_{L^p(\RR^d)\to L^p(\RR^d)}\le C_{p,q}',$$
	for some constant $C_{p,q}'>0$, which is independent of the dimension. Moreover, a simple calculation shows that $2c(B^q)=d^{1/q}.$ Therefore, applying Theorem \ref{thm:8} we obtain
$$	\big\|\sup_{t\ge d^{1/q+1}}|\mathcal M_t^{B^q} f|\big\|_{\ell^p}\le \big\|\sup_{t\ge c(B^q)d}|\mathcal M_t^{B^q} f|\big\|_{\ell^p}\le C_{p,q}'e^6\|f\|_{\ell^p},$$
for all $p\in(1, \infty]$. This completes the proof of the corollary.
\end{proof}

\subsection{Proof of Theorem \ref{thm:9}}
We fix a sequence 
$1\leq \lambda_1<\ldots<\lambda_d<\ldots<\sqrt{2}$  and recall that 
\begin{align*}
  E_d=\Big\{x\in \RR^d\colon \sum_{j=1}^d \lambda_j^2\, x_j^2\,\le 1 \Big\}
\end{align*}
is the ellipsoid as in \eqref{eq:elip}. We note that $\frac{1}{\sqrt{2}}B^2 \subseteq E_d\subseteq  B^2$, hence
\[
\frac{1}{2}d^{1/2}\le c(E_d)\le d^{1/2}.
\]
Therefore, invoking Theorem \ref{thm:8} and arguing in a similar way as in the proof of Corollary \ref{cor:trEu1} we obtain that for every $p\in(3/2, \infty]$   there is a constant $C_{p}>0$ such that the following inequality 
\begin{align*}
  \big\|\sup_{t\ge d^{3/2}}|\mathcal M_t ^{E_d} f|\big\|_{\ell^p}\leq C_{p}\|f\|_{\ell^p},
\end{align*}
holds for all $f\in\ell^p(\ZZ^d)$, since by \cite{B2} and \cite{Car1} we know, for $p\in(3/2, \infty]$, that
	$$\big\|M_*^{E_d}\big\|_{L^p(\RR^d)\to L^p(\RR^d)}\le C_{p}',$$
	for some constant $C_{p}'>0$, which is independent of the dimension.

        On the other hand we shall show that for all $p\in(1, \infty)$ the constant $\mathcal C_p(E_d)$ defined in \eqref{eq:Cepd} grows logarithmically with the dimension, namely 
\[
\mathcal C_p(E_d)\gtrsim(\log d)^{1/p}
\]
 with the implicit constant, which does not depend on $d\in\NN$.

\begin{proof}[Proof of Theorem \ref{thm:9}]
	Let $e_j$ be the $j$-th basis vector from the standard basis
        in $\RR^d.$ We claim that for every $j\in\{1,\ldots,d\}$ we have
	\begin{equation}
	\label{eq:claimOmegaj}
        \Omega_j:=\lambda_j E_d\cap \ZZ^d=\{0,\pm e_1,\ldots,\pm e_j\}.
        \end{equation}
        As the inclusion
        $\{0,\pm e_1,\ldots,\pm e_j\}\subseteq \Omega_j$ is
        straightforward we only focus on proving the reverse
        inclusion. Take $x\in \lambda_j E_d\cap \ZZ^d$ and note that for
        such $x\in\ZZ^d$ we have
        $\sum_{i=1}^d x_i^2 (\lambda_i^2/\lambda_j^{2})\le 1.$ Since we have assumed that $1\leq \lambda_1<\ldots<\lambda_d<\ldots<\sqrt{2}$ we conclude that
        $|x|_{\infty}\le 1$ and that at most one coordinate of $x$,
        say $x_k,$ is non-zero. Moreover, since
        $\sum_{i=1}^d x_i^2 (\lambda_i^2/\lambda_j^{2})\le 1$ we must
        have $k\le j$. Hence, we have shown that
        $\lambda_j E_d\cap \ZZ^d\subseteq \{0,\pm e_1,\ldots,\pm e_j\}$
        thus justifying the claim \eqref{eq:claimOmegaj}.

For every  $j\in\{1,\ldots,d\}$ and  $x\in\ZZ^d$, let 
        \[
\mathcal K_{\Omega_j}(x):=\mathcal K_{\lambda_j}^{E_d}(x)=\frac{1}{|\Omega_j|}\ind{\Omega_j}(x).
        \]
Our aim will be to construct, for every $p\in(1, \infty)$, a non-zero function $f\in\ell^p(\ZZ^d)$ such that
\begin{align*}
  \big\|\sup_{t>0}|\mathcal K_{t}^{E_d}*f|\big\|_{\ell^p}\ge\big\|\max_{1\le j\le d}|\mathcal K_{\Omega_j}*f|\big\|_{\ell^p}\ge C_p(\log d)^{1/p}\|f\|_{\ell^p}
\end{align*}
for some constant $C_p>0$ which depends only on $p$.  For this purpose let $r\in \NN_0$ be such that
$$2^{r+1}-1\le d< 2^{r+2}-1.$$
With this choice of $r$, since $\sum_{s=0}^r 2^s=2^{r+1}-1$, we  decompose $\ZZ^d$ as follows
	$$\ZZ^d=\bigg(\prod_{s=0}^r \ZZ^{I_s}\bigg)\times \ZZ^{a(r,d)},$$
        where $I_s=\{2^{s}-1, 2^s, \ldots,  2^{s+1}-2\}$ and  $a(r,d)=d-2^{r+1}+1$. For $s\in\{0, \ldots, r\}$ we set
	\begin{equation*}
	A_s:=\Big\{y\in \ZZ^{I_s}\colon \forall_{i\in I_s}\ |y_i|\le 2^d\textrm{ and } \sum_{i\in I_s} y_i\textrm{ is odd}\Big\}.
        \end{equation*}
	Note that $|I_s|=2^s$, hence 
        $$(2^{d+1}+1)^{2^s-1}\cdot 2^d\le |A_s\cap \ZZ^{I_s}|\le (2^{d+1}+1)^{2^s}$$
        and thus $$\frac13 (2^{d+1}+1)^{2^s}\le |A_s\cap \ZZ^{I_s}|\le  (2^{d+1}+1)^{2^s}.$$
        Now for each  $x\in\ZZ^d$ we take 
	$$f(x)=\ind{A_0\times\ldots \times A_r}\otimes \delta_{0}(x),$$
	where $\delta_0$ stands for the Dirac delta at zero in
        ${\ZZ^{a(r,d)}}$.

        Therefore, 
        for all $x\in \ZZ^d$  we have
	\begin{equation}
	\label{eq:de4}
	\begin{split}
	&\max_{1\le j\le d}|\mathcal K_{\Omega_j}*f(x)|\ge 	\max_{0\le s< r}|\mathcal K_{\Omega_{2^{s+1}}}*f(x)|\\&
	\ge 	\max_{0\le s< r}\bigg(\ind{A_0\times\ldots\times A_{s-1}}\otimes \bigg(\frac{1}{|\Omega_{2^{s+1}}|}\sum_{j\in I_s}\ind{A_s\pm e_j} \bigg)\otimes\ind{A_{s+1}\times \ldots \times A_r}\otimes \delta_0\bigg)(x).
	\end{split}
		\end{equation}
		For every $s\in\{0, \ldots, r-1\}$ let 
                $$A_s':=\Big\{y\in\ZZ^{I_s}\colon \forall_{i\in I_s}\ |y_i|\le2^d \textrm{ and } \sum_{i\in I_s}
                y_i\textrm{ is even}\Big\}$$ and observe that for all
                $j\in I_s$ we have
                $A_s'\subseteq A_s\pm e_j,$ and
                \begin{align}
                  \label{eq:40}
                  \frac{1}{|\Omega_{2^{s+1}}|}\sum_{j\in I_s}\ind{A_s\pm e_j}(x)
                  =\frac{1}{2^{s+2}+1}\sum_{j\in I_s}\ind{A_s\pm e_j}(x)\ge \frac15 \ind{A_s'}(x);
                \end{align}
		as well as 
                \begin{align*}
                  A_s'\cap A_s=\emptyset\quad\textrm{and}\quad
                  \frac13 (2^{d+1}+1)^{2^s}\le |A_s'\cap \ZZ^{I_s}| \le  (2^{d+1}+1)^{2^s}.  
                \end{align*}
                In particular, for $s\in\{0,\ldots,r-1\}$, the sets
                $$B_s:=A_0\times\ldots \times A_{s-1}\times A_s'\times A_{s+1}\times \ldots\times A_r,$$
		 are pairwise disjoint subsets of $\prod_{s=0}^r \ZZ^{I_s}=\ZZ^{2^{r+1}-1}$ such that
		 \begin{equation}
		 \label{eq:elip_cA}
		 |B_s\cap \ZZ^{2^{r+1}-1}|\ge\frac13 \prod_{s=0}^{r}|A_s\cap \ZZ^{I_s}|
                 = \frac13 \|f\|_{\ell^p}^p.
                 \end{equation}  
		Having defined the sets $B_s$ and using their disjointness and \eqref{eq:40} it follows,  for all $x\in \ZZ^d$, that
		\begin{align*}
                &\max_{0\le s< r}\bigg(\ind{A_0\times\ldots\times A_{s-1}}\otimes
                \bigg(\frac{1}{|\Omega_{2^{s+1}}|}\sum_{j\in I_s}\ind{A_s\pm e_j} \bigg)\otimes\ind{A_{s+1}\times \ldots \times A_r}\otimes \delta_0\bigg)(x)\\
		&\ge \frac{1}{5}\max_{0\le s< r}(\ind{B_s}\otimes \delta_0)(x)
                =\frac{1}{5}\Big(\sum_{s=0}^r (\ind{B_s}\otimes \delta_0)(x)\Big)^{1/p}.
                \end{align*}
                Thus,  by \eqref{eq:de4} and  \eqref{eq:elip_cA} we obtain
                \begin{align*}
                  \big\|\max_{1\le j\le d}|\mathcal K_{\Omega_j}*f|\big\|_{\ell^p}\ge3^{-1/p}5^{-1}r^{1/p}\|f\|_{\ell^p}\ge C_p(\log  d)^{1/p}\|f\|_{\ell^p}
                \end{align*}
                for some constant $C_p>0$ which depends only on $p$.  This completes the  proof of Theorem \ref{thm:9}.
\end{proof}

\section{Fourier transform estimates}
\label{sec:3}
In this section we turn to the main positive results of this paper and only treat the case of cubes. 
We supply estimates independent of the
dimension for the Fourier multipliers $\mathfrak m_t^Q=\hat{\mathcal K}_t^Q$ corresponding
to the operators $\mathcal M_t^Q$ defined in \eqref{eq:0} with $G=Q$,
where $Q=[-1, 1]^d$. In what follows,  the product structure of the cubes $Q_t\cap\ZZ^d$ for $t>0$ will be crucial.
It allows us to prove the key inequalities in Proposition \ref{pro:FourEst}, which are very reminiscent of corresponding inequalities for the continuous case in \cite{B1}, \cite{B2} and \cite{B3}.

From now on, we will only be working with the cubes, so we shall abbreviate
\[
\mathcal M_t=\mathcal M_t^Q, \qquad\mathcal K_t=\mathcal K_t^Q,\qquad \mathfrak m_t^Q=\mathfrak m_t=\hat{\mathcal K}_t.
\]
Note that  $|Q_t\cap\ZZ^d|=|Q_{\lfloor t\rfloor}\cap\ZZ^d|$ and $Q_t\cap\ZZ^d=Q_{\lfloor t\rfloor}\cap\ZZ^d$ for all $t\in(0, \infty)$. Thus
\[
\mathcal K_t(x)=\mathcal K_{\lfloor t\rfloor}(x)=\frac{1}{(2\lfloor t\rfloor+1)^d}\sum_{m\in Q_{\lfloor t\rfloor}}\delta_m(x)\qquad \text{for}\qquad x\in \ZZ^d,
\]
and
\begin{align*}
\mathfrak m_t(\xi)=\mathfrak m_{\lfloor t\rfloor}(\xi)=\frac{1}{(2\lfloor t\rfloor+1)^d}\sum_{m\in Q_{\lfloor t\rfloor}}e^{2\pi i m \cdot\xi
}\qquad \text{for}\qquad \xi\in\TT^d.
\end{align*}
For  $\xi=(\xi_1,\ldots,\xi_d)\in\RR^d$, by a simple calculation, we have
\[
\mathfrak m_t(\xi)=\frac{1}{(2\lfloor t\rfloor+1)^d}\sum_{m\in
  Q_{\lfloor t\rfloor}}e^{2\pi i m \cdot\xi
}=\prod_{k=1}^d\frac{\sin((2\lfloor t\rfloor+1)\pi\xi_k)}{(2\lfloor
  t\rfloor+1)\sin(\pi \xi_k)}.
\]
\begin{remark}
The torus $\TT^d$ is a priori endowed with the   periodic norm
\[
\|\xi\|:=\Big(\sum_{k=1}^d \|\xi_k\|^2\Big)^{1/2}  \qquad \text{for}\qquad \xi=(\xi_1,\ldots,\xi_d)\in\TT^d,
\]
where $\|\xi_k\|:=\dist(\xi_k, \ZZ)$ for all $\xi_k\in\TT$ and  $k\in\{1, \ldots, d\}$.
However, we identify $\TT^d$ with $[-1/2, 1/2)^d$, hence the norm $\|\cdot\|$
coincides with the Euclidean norm $|\cdot|$
 restricted to
$[-1/2, 1/2)^d$. Therefore, throughout this section, unless otherwise
stated, all estimates will be provided in terms of the Euclidean norm
$|\xi|=\big(\sum_{k=1}^d |\xi_k|^2\big)^{1/2}$ for all $\xi\in\TT^d$.
\end{remark}

The main results of this section are gathered in the proposition below.
\begin{proposition}
	\label{pro:FourEst}
	There exists a universal constant $C>0$ such that for every $d\in\NN,$ $t,t_1,t_2\ge 1,$ and
	for every $\xi\in\TT^d$ we have
        \begin{align}
          \label{eq:4}
          \begin{split}
          |\mathfrak m_t(\xi)|&\le\frac{C}{t|\xi|},\\
          |\mathfrak m_t(\xi)-1|&\le Ct|\xi|,\\
          |\mathfrak m_{t_1}(\xi)-\mathfrak m_{t_2}(\xi)|&\le C\big|\lfloor t_1\rfloor-\lfloor t_2\rfloor\big|\max\big\{t_1^{-1},t_2^{-1}\big\}.
          \end{split}
        \end{align}
\end{proposition}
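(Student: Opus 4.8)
The plan is to exploit the product structure of $\mathfrak m_t$ already recorded above and reduce everything to one–dimensional estimates for the Dirichlet kernel. Write $N=\lfloor t\rfloor$, so that $N\ge 1$ and $N\le t\le N+1\le 2N$; hence $t\simeq N$ and a bound of the form $C/N$ (resp.\ $CN$) is the same as one of the form $C/t$ (resp.\ $Ct$). Recall $\mathfrak m_t(\xi)=\prod_{k=1}^d D_N(\xi_k)$ with $D_N(\theta)=\frac{\sin((2N+1)\pi\theta)}{(2N+1)\sin(\pi\theta)}$. The only nontrivial feature of the whole argument is that, when multiplying the one–dimensional bounds over the $d$ coordinates, one must never introduce a factor growing with $d$; this forces a careful interplay between the behaviour of $D_N$ near the origin and away from it.

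\textbf{One–dimensional toolkit.} First I would collect elementary pointwise facts about $D_N$ on $[-1/2,1/2]$, all uniform in $N\ge1$: (i) $|D_N(\theta)|\le1$, being an average of characters; (ii) $|D_N(\theta)|\le\frac1{2(2N+1)|\theta|}$ for $\theta\ne0$, from $|\sin\pi\theta|\ge2|\theta|$; (iii) $|D_N(\theta)|\le e^{-N^2\theta^2}$ for $|\theta|\le\frac1{4N}$, obtained from $\log D_N(\theta)=\log\frac{\sin((2N+1)\pi\theta)}{(2N+1)\pi\theta}-\log\frac{\sin\pi\theta}{\pi\theta}$ together with $\log\frac{\sin x}{x}\le-\frac{x^2}{6}$ for $|x|<\pi$ and a crude two–sided bound for the subtracted term; (iv) $D_N$ is strictly decreasing on $[0,\frac1{2N+1}]$, since its derivative there equals $-\frac{4\pi}{2N+1}\sum_{m=1}^N m\sin2\pi m\theta<0$, so for $\frac1{4N}\le|\theta|\le\frac1{2N+1}$ one has $|D_N(\theta)|\le D_N(\tfrac1{4N})=\frac{\cot(\pi/4N)}{2N+1}\le\frac{4N}{\pi(2N+1)}<\frac2\pi$, while (ii) gives $|D_N(\theta)|\le\frac12$ for $|\theta|\ge\frac1{2N+1}$ --- in particular there is a \emph{uniform gap} $q:=\frac2\pi<1$ with $|D_N(\theta)|\le q$ for $\frac1{4N}\le|\theta|\le\frac12$; (v) $0\le1-D_N(\theta)=\frac2{2N+1}\sum_{m=1}^N2\sin^2(\pi m\theta)\le CN^2\theta^2$; (vi) $D_{N+1}(\theta)-D_N(\theta)=\frac2{2N+3}\bigl(\cos(2\pi(N+1)\theta)-D_N(\theta)\bigr)$, whence by (v) $|D_{N+1}(\theta)-D_N(\theta)|\le\min\{2/N,\ CN\theta^2\}$.

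\textbf{The first two inequalities.} Split $\{1,\dots,d\}=\mathcal S\sqcup\mathcal L$ with $\mathcal S=\{k:|\xi_k|\le\frac1{4N}\}$ and $\mathcal L=\{k:|\xi_k|>\frac1{4N}\}$. Using (iii) on $\mathcal S$, (i) on all but the largest index of $\mathcal L$, and (ii)/(iv) on that index, one obtains
\[
|\mathfrak m_t(\xi)|\ \le\ \exp\Bigl(-N^2\textstyle\sum_{k\in\mathcal S}\xi_k^2\Bigr)\,q^{\,\#\mathcal L-1}\,\frac1{2(2N+1)\max_{k\in\mathcal L}|\xi_k|}
\]
(the last two factors understood as $1$ when $\mathcal L=\emptyset$). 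If $\sum_{k\in\mathcal S}\xi_k^2\ge\frac12|\xi|^2$ the exponential already yields $|\mathfrak m_t(\xi)|\le e^{-N^2|\xi|^2/2}\le C/(N|\xi|)$, using $\sup_{u\ge0}ue^{-u^2}<\infty$; otherwise $\max_{k\in\mathcal L}|\xi_k|\ge|\xi|/\sqrt{2\#\mathcal L}$ and, since $\sup_{\ell\ge1}\sqrt\ell\,q^{\ell-1}<\infty$, again $|\mathfrak m_t(\xi)|\le C/(N|\xi|)\le C'/(t|\xi|)$; this is the first inequality. For the second, if $N|\xi|>1$ then $|\mathfrak m_t(\xi)-1|\le2\le2t|\xi|$ by (i); and if $N|\xi|\le1$, the telescoping identity $\prod_k a_k-1=\sum_k(a_k-1)\prod_{j<k}a_j$ with $a_k=D_N(\xi_k)$, together with (i) and (v), gives $|\mathfrak m_t(\xi)-1|\le\sum_k|D_N(\xi_k)-1|\le CN^2|\xi|^2\le CN|\xi|\le Ct|\xi|$.

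\textbf{The third inequality and the main obstacle.} Put $N_i=\lfloor t_i\rfloor$; assume $N_1\ge N_2\ge1$ and set $h=N_1-N_2$ (nothing to prove if $h=0$). One checks $t_2<N_2+1\le N_1\le t_1$, so $\max\{t_1^{-1},t_2^{-1}\}=t_2^{-1}\simeq N_2^{-1}$, and it suffices to prove $|\mathfrak m_{N_1}(\xi)-\mathfrak m_{N_2}(\xi)|\le Ch/N_2$; writing $\mathfrak m_{N_1}-\mathfrak m_{N_2}=\sum_{N=N_2}^{N_1-1}(\mathfrak m_{N+1}-\mathfrak m_N)$ reduces this to the case $h=1$, i.e.\ to $|\mathfrak m_{N+1}(\xi)-\mathfrak m_N(\xi)|\le C/N$. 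Here I would again split the coordinates into $\mathcal S,\mathcal L$ as above, apply $\prod_k a_k-\prod_k b_k=\sum_k(\prod_{j<k}a_j)(a_k-b_k)(\prod_{j>k}b_j)$ with $a_k=D_{N+1}(\xi_k)$, $b_k=D_N(\xi_k)$, bound each surviving product factor by $g(\xi_j):=\max\{|D_{N+1}(\xi_j)|,|D_N(\xi_j)|\}$, and use that by (iii)--(iv) (with a short case check for the thin shell $\frac1{4(N+1)}<|\xi_j|\le\frac1{4N}$) one has $g(\xi_j)\le e^{-N^2\xi_j^2}$ for $j\in\mathcal S$ and $g(\xi_j)\le q<1$ for $j\in\mathcal L$. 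Estimating the part of the telescoping sum over $k\in\mathcal S$ by $|D_{N+1}(\xi_k)-D_N(\xi_k)|\le CN\xi_k^2$ with the Gaussian factor, and the part over $k\in\mathcal L$ by $|D_{N+1}(\xi_k)-D_N(\xi_k)|\le 2/N$ with $q^{\#\mathcal L-1}$, gives
\[
|\mathfrak m_{N+1}(\xi)-\mathfrak m_N(\xi)|\ \le\ CN\Bigl(\textstyle\sum_{k\in\mathcal S}\xi_k^2\Bigr)e^{-N^2\sum_{k\in\mathcal S}\xi_k^2}\ +\ \tfrac2N\,\#\mathcal L\cdot q^{\,\#\mathcal L-1}\ \le\ \frac CN,
\]
using $\sup_{u\ge0}u^2e^{-u^2}<\infty$ and $\sup_{\ell\ge1}\ell\,q^{\ell-1}<\infty$; summing over the $h$ consecutive scales finishes the proof. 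The arithmetic behind (i)--(vi) is routine; the delicate point --- and the reason the cube is special --- is that in the two product estimates one must never dominate $|\mathfrak m_t(\xi)|$, or a telescoping sum, by a quantity depending on $d$. This is exactly why the Gaussian decay near the origin and the uniform gap $q<1$ away from it must be combined so carefully: the Gaussian absorbs the (possibly very many) coordinates in the main lobe, while $q<1$ gives \emph{geometric} --- not merely power-type --- decay in the number of remaining coordinates. Using only the envelope bound (ii) is hopeless, as it would merely give $\prod_k|D_N(\xi_k)|\le1$ when every $|\xi_k|=\frac1{4N}$, which is far from $C/(N|\xi|)$.
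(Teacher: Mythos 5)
Your proposal is correct and proves all three inequalities, but it takes a genuinely different, self-contained route from the paper. For the first bound the paper splits coordinates according to whether $\big((2N+1)\sin(\pi\xi_k)\big)^2\ge 2$ or $<2$, proves the algebraic inequality $\frac{\sin((2N+1)\xi_k)}{(2N+1)\sin\xi_k}\le 1-\frac{\big((2N+1)\sin\xi_k\big)^2}{10}$ from Taylor bounds on $\sin$, and then bounds $(a_1+\dots+a_d)\prod_j(1-a_j/10)$ by a universal constant; your argument replaces the ``$1-a/10$'' factor by a Gaussian envelope $e^{-N^2\theta^2}$ near the origin and a uniform gap $q=2/\pi<1$ away from it, which is morally the same dichotomy implemented with different one-dimensional tools. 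The larger divergence is in the second and third bounds: the paper never touches $D_{N+1}-D_N$ directly but instead compares $\mathfrak m_N$ with the \emph{continuous} multiplier $\nu_N$ (via the identity $\mathfrak m_N-\nu_N=\mathfrak m_N(1-\nu_0)$) and then imports Bourgain's $\RR^d$ estimates for $\nu_t$, whereas you prove everything from scratch on the lattice by reducing to consecutive scales and telescoping the product with the explicit recursion $D_{N+1}-D_N=\frac{2}{2N+3}\big(\cos(2\pi(N+1)\theta)-D_N\big)$. Each approach has a payoff: the paper's comparison trick is shorter once \cite{B1} is available and reuses existing machinery, while yours is self-contained and makes transparent exactly which one-dimensional facts about the Dirichlet kernel drive the dimension-free bound. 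One small wording slip: in the display bound for $|\mathfrak m_t(\xi)|$ you invoke ``(i) on all but the largest index of $\mathcal L$'' but the formula you write has the factor $q^{\#\mathcal L-1}$, which requires the gap bound (iv), not (i); the formula and the subsequent estimate are correct, only the label is off.
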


The first estimate in \eqref{eq:4} will follow from Lemma \ref{lem:3} and the remaining two estimates will be a consequence of Lemma \ref{lem:4}. For the proof of Lemma \ref{lem:4} we will need some portion of notations and facts from \cite{B1}.
For every $t\ge0$ and $\xi\in\RR^d$, we introduce
\[
\nu_t(\xi)=\frac{1}{(2t+1)^d}\int_{[-t-1/2, t+1/2]^d}e^{2\pi i
	\xi\cdot x}{\rm d}x.
\]
Changing the variables one obtains
\[
\nu_t(\xi)=\int_{Q_{1/2}}e^{2\pi i
	(2t+1)\xi\cdot x}{\rm d}x=\prod_{k=1}^d\frac{\sin((2t+1)\pi\xi_k)}{(2t+1)\pi \xi_k}.
\]
Observe that $|Q_{1/2}|=1$ and that the cube $Q_{1/2}$ is in the isotropic position, which means that the following equation is satisfied
$$\int_{Q_{1/2}} \langle x, \xi\rangle^2\dif x=L(Q_{1/2})\cdot |\xi|^2\qquad \text{for all}\qquad \xi\in \RR^d,$$
where $L=L(Q_{1/2})$ is the  isotropic constant corresponding to the cube.   

Therefore, it follows from   \cite[eq. (10),(11),(12), p.\ 1473]{B1} (see also \cite[p. 63]{DGM1})  that there is a universal constant
$C>0$ such that for every $d\in\NN$, $t\ge0$ and for every $\xi\in\RR^d$ we have
\begin{align*}
|\nu_t(\xi)|\le C\min\big\{1, (L (2t+1)|\xi|)^{-1}\big\}, \qquad 
|\nu_t(\xi)-1|\le CL (2t+1)|\xi|, \qquad |\langle \xi,\nabla \nu_{0}(\xi)\rangle|\leq C.
\end{align*}
Since it can be easily computed that $L(Q_{1/2})=1/12$, and the above estimates become 
\begin{align}
\label{eq:19}
|\nu_t(\xi)|\le C\min\big\{1, (t|\xi|)^{-1}\big\}, \qquad 
|\nu_t(\xi)-1|\le C(2t+1)|\xi|, \qquad |\langle \xi,\nabla \nu_{0}(\xi)\rangle|\leq C.
\end{align}
Moreover, since $\nu_t(\xi)=\nu_{0}((2t+1)\xi)$ the estimate
$|\langle \xi,\nabla \nu_{0}(\xi)\rangle|\leq C$ and the mean
value theorem give
\begin{align}
\label{eq:91}
|\nu_{N_1}(\xi)-\nu_{N_2}(\xi)|\le C|N_1-N_2|\max\big\{N_1^{-1},N_2^{-1}\big\}
\end{align}
for every $N_1, N_2\in\NN$ and $\xi\in \RR^d$.

The next lemma restates the first bound in \eqref{eq:4}. We remark
that the major difficulty lies in obtaining a dimension-free
estimate. The methods from \cite{B1} do not work in the discrete
setting, but a straightforward argument presented in Lemma \ref{lem:3}
and based on a product structure of $\mathfrak m_N$  leads us to the desired bound.
\begin{lemma}
  \label{lem:3}
  There exists a constant $C>0$ such that for every $d, N\in\NN$ and
  for every $\xi\in\TT^d$ we have
  \begin{align}
    \label{eq:6}
   |\mathfrak m_N(\xi)|= \bigg|\prod_{k=1}^d\frac{\sin((2N+1)\pi\xi_k)}{(2N+1)\sin(\pi \xi_k)}\bigg|\le\frac{C}{N|\xi|}.
  \end{align}
\end{lemma}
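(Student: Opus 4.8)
The plan is to exploit the product structure of $\mathfrak m_N$ together with the elementary one-dimensional bounds for the Dirichlet kernel. First I would record the basic estimate that for $\xi_k\in[-1/2,1/2)$ one has
\[
\bigl|\sin(\pi\xi_k)\bigr|\ge 2|\xi_k|,
\]
so that each factor satisfies $\bigl|\tfrac{\sin((2N+1)\pi\xi_k)}{(2N+1)\sin(\pi\xi_k)}\bigr|\le 1$ (this is just $|\sin((2N+1)\theta)|\le (2N+1)|\sin\theta|$, which follows by induction) and also
\[
\biggl|\frac{\sin((2N+1)\pi\xi_k)}{(2N+1)\sin(\pi\xi_k)}\biggr|\le\frac{1}{2(2N+1)|\xi_k|}.
\]
Thus the whole product is bounded by $1$ and, taking the sharper bound in the single coordinate $k_0$ realizing $|\xi_{k_0}|=\max_k|\xi_k|$ while bounding all other factors by $1$, we get
\[
|\mathfrak m_N(\xi)|\le\frac{1}{2(2N+1)\max_k|\xi_k|}\le\frac{1}{2(2N+1)}\cdot\frac{\sqrt d}{|\xi|},
\]
which alone is \emph{not} dimension-free because of the $\sqrt d$. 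The point is that we must combine several factors, not just one, to kill the dimensional loss.

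The key step, therefore, is an interpolation-type trick between the two elementary bounds. For each coordinate write $u_k:=\bigl|\tfrac{\sin((2N+1)\pi\xi_k)}{(2N+1)\sin(\pi\xi_k)}\bigr|\le\min\{1,\ (2(2N+1)|\xi_k|)^{-1}\}$. I would use the inequality $\min\{1,a\}\le a^{\theta}$ valid for any $\theta\in[0,1]$ and $a>0$, with a well-chosen exponent, or more directly the bound $\min\{1,a\}\le (1+a^{-1})^{-1}\le \exp(-c\min\{1,a\})$-type estimate; concretely, using $u_k\le\exp\bigl(-c\,(2N+1)^2|\xi_k|^2\bigr)$ for $|\xi_k|$ small and $u_k\le\bigl(2(2N+1)|\xi_k|\bigr)^{-1}$ otherwise would let me write, after separating the coordinates into the "small" set $S=\{k:(2N+1)|\xi_k|\le 1\}$ and its complement,
\[
\prod_k u_k\le\exp\Bigl(-c(2N+1)^2\sum_{k\in S}|\xi_k|^2\Bigr)\cdot\prod_{k\notin S}\frac{1}{2(2N+1)|\xi_k|}.
\]
On the complement each factor is at most $1/2$, and on $S$ the Gaussian factor controls $(2N+1)^2\sum_{k\in S}|\xi_k|^2$; combining the Gaussian decay in the small block with the product of small factors in the large block, one checks that in either regime the product is $\lesssim \bigl((2N+1)|\xi|\bigr)^{-1}$ with an absolute constant, since $|\xi|^2=\sum_{k\in S}|\xi_k|^2+\sum_{k\notin S}|\xi_k|^2$ and in the large block $\prod_{k\notin S}(2(2N+1)|\xi_k|)^{-1}\lesssim \bigl((2N+1)\max_{k\notin S}|\xi_k|\bigr)^{-1}$ while the Gaussian beats $\bigl((2N+1)\sqrt{\sum_{k\in S}|\xi_k|^2}\bigr)^{-1}$ uniformly. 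Finally $(2N+1)|\xi|\gtrsim N|\xi|$, giving \eqref{eq:6}.

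The main obstacle I anticipate is exactly the bookkeeping in this last paragraph: getting a clean, genuinely dimension-free constant out of the interaction between the "Gaussian" small-frequency block and the "multiplicatively small" large-frequency block, and verifying that no logarithmic or $\sqrt d$ loss sneaks in when $\xi$ straddles the two regimes. A cleaner route, which I would try first, is to use the single uniform bound $u_k\le (1+c(2N+1)^2|\xi_k|^2)^{-1/2}$ for an absolute $c>0$ (this follows from $|\sin((2N+1)\theta)|\le (2N+1)|\sin\theta|$ and $|\sin((2N+1)\theta)|\le 1$ by taking a geometric mean, using $|\sin\theta|\ge 2|\theta|/\pi\cdot$ something on $[-\pi/2,\pi/2]$), whence
\[
|\mathfrak m_N(\xi)|^2\le\prod_{k=1}^d\frac{1}{1+c(2N+1)^2|\xi_k|^2}\le\frac{1}{1+c(2N+1)^2\sum_k|\xi_k|^2}=\frac{1}{1+c(2N+1)^2|\xi|^2},
\]
where the middle inequality is just $\prod(1+a_k)\ge 1+\sum a_k$ for $a_k\ge 0$. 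This gives $|\mathfrak m_N(\xi)|\le\bigl(c(2N+1)^2|\xi|^2\bigr)^{-1/2}\lesssim (N|\xi|)^{-1}$ immediately and dimension-freely, reducing everything to the purely one-dimensional claim $\bigl|\tfrac{\sin((2N+1)\pi x)}{(2N+1)\sin(\pi x)}\bigr|^2\le\bigl(1+c(2N+1)^2 x^2\bigr)^{-1}$ for $x\in[-1/2,1/2)$, which is an elementary calculus exercise splitting on $(2N+1)|x|\le 1$ versus $>1$.
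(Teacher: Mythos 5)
Your ``cleaner route'' at the end is correct and in fact yields the lemma, but it is a genuinely different argument from the paper's. The paper works with the variables $a_k=((2N+1)\sin(\pi\xi_k))^2$, splits the coordinates into a set $A$ where $a_k\ge 2$ and its complement, bounds the factors over $A$ by the envelope $a_k^{-1/2}\le 2^{-1/2}$, and on $A^c$ establishes the quadratic decay $\tfrac{\sin((2N+1)\pi\xi_k)}{(2N+1)\sin(\pi\xi_k)}\le 1-a_k/10$ via two applications of the inequality $u-u^3/6\le\sin u\le u-u^3/8$; it then closes by maximizing $(a_1+\cdots+a_d)\prod_j(1-a_j/10)$ using $1-u\le e^{-u}$ and $ue^{-u}\le1$. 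Your route instead packages both regimes into a single one-dimensional estimate
\[
\Bigl|\tfrac{\sin((2N+1)\pi x)}{(2N+1)\sin(\pi x)}\Bigr|^2\le\frac{1}{1+c(2N+1)^2x^2},\qquad x\in[-1/2,1/2),
\]
and then multiplies, using only $\prod_k(1+b_k)\ge 1+\sum_k b_k$ to remove the dimension. That is a nicer reduction: once the $1$D inequality is in hand, the product step is a one-liner, whereas the paper needs the $A/A^c$ decomposition and the $ue^{-u}\le 1$ trick. The trade-off is that the one-dimensional lemma carries all the weight and you do not actually prove it; be aware that the naive split you suggest, bounding the factor by $1$ for $(2N+1)|x|\le 1$ and by the envelope otherwise, does \emph{not} give $(1+c(2N+1)^2x^2)^{-1}$ for small $x$ (the trivial bound $1$ is too weak there). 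You really need the second-order decay of the Dirichlet kernel near $0$, e.g.\ via $\sin u\le u-u^3/\pi^2$ on $[0,\pi]$ and $\sin(\pi x)\ge\pi x-(\pi x)^3/6$, to check the inequality for $(2N+1)|x|\le 1$ and then switch to the envelope $\sin(\pi x)\ge 2x$ for $(2N+1)|x|>1$; carried out, this does produce an admissible absolute $c$. Your first (Gaussian/two-block) sketch is, as you yourself flag, not tight enough as written and is not needed once the clean one-dimensional bound is established.
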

\begin{proof}
  The proof will be completed if we show equivalently that there is a
  constant $C>0$  such that for every $d, N\in\NN$ and $0<\xi_k\le
  1/2$ for $k\in\{1,\ldots, d\}$  we have
\begin{align}
    \label{eq:3}
  \sum_{k=1}^d(2N+1)^2\xi_k^2\cdot\bigg(\prod_{j=1}^d\frac{\big(\sin((2N+1)\pi\xi_j)\big)^2}{\big((2N+1)\sin
  (\pi\xi_j)\big)^2}\bigg)\le C.
\end{align}
For $0<|x|\le \pi/2$ we know that
\begin{align}
\label{eq:26}
  \frac{2}{\pi}\le\frac{\sin x}{x}\le 1.
\end{align}
Thus instead of \eqref{eq:3} it suffices to show
\begin{align*}
    \sum_{k=1}^d\big((2N+1)\sin(\pi\xi_k)\big)^2\cdot\bigg(\prod_{j=1}^d\frac{\big(\sin((2N+1)\pi\xi_j)\big)^2}{\big((2N+1)\sin
  (\pi\xi_j)\big)^2}\bigg)\le C.
\end{align*}
For this purpose set
\[
A=\big\{k\in[1, d]\cap\ZZ: \big((2N+1)\sin(\pi\xi_k)\big)^2\ge2\big\}
\]
and note that
\begin{align}
  \label{eq:13}
  \begin{split}
 \sum_{k=1}^d\big((2N+1)\sin(\pi\xi_k)\big)^2&\cdot\bigg(\prod_{j=1}^d\frac{\big(\sin((2N+1)\pi\xi_j)\big)^2}{\big((2N+1)\sin
   (\pi\xi_j)\big)^2}\bigg)\\ &\le\sum_{k\in
   A}\big((2N+1)\sin(\pi\xi_k)\big)^2\cdot\bigg(\prod_{j\in A}\frac{\big(\sin((2N+1)\pi\xi_j)\big)^2}{\big((2N+1)\sin
   (\pi\xi_j)\big)^2}\bigg)\\
 &+
 \sum_{k\in
   A^c}\big((2N+1)\sin(\pi\xi_k)\big)^2\cdot\bigg(\prod_{j\in A^c}\frac{\big(\sin((2N+1)\pi\xi_j)\big)^2}{\big((2N+1)\sin
  (\pi\xi_j)\big)^2}\bigg).
  \end{split}
\end{align}
We shall estimate the sums from \eqref{eq:13} separately. 

For the
first sum let $M=\max_{k\in A}(2N+1)\sin(\pi\xi_k)$. Then
\begin{align*}
  \begin{split}
    \sum_{k\in
   A}\big((2N+1)\sin(\pi\xi_k)\big)^2&\cdot\bigg(\prod_{j\in A}\frac{\big(\sin((2N+1)\pi\xi_j)\big)^2}{\big((2N+1)\sin
   (\pi\xi_j)\big)^2}\bigg)\\
 &\le\sum_{k\in
   A} \big((2N+1)\sin(\pi\xi_k)\big)^2\cdot\frac{1}{M^22^{2|A|-2}}\\
&\le\sum_{k\in
   A} \frac{M^2}{M^22^{2|A|-2}}\le 4|A|\cdot4^{-|A|}\le C.
  \end{split}
\end{align*}

For the second sum  in \eqref{eq:13} we may assume, without  loss of
generality, that $A^c=\{1,\ldots,d\}$. Then it suffices to prove that for any
$0<\xi_k\le \pi/2$ with  $k\in\{1,\ldots,d\}$ we have
\begin{align}
\label{eq:15}  \sum_{k=1}^d\big((2N+1)\sin\xi_k\big)^2\cdot\bigg(\prod_{j=1}^d\frac{\big(\sin((2N+1)\xi_j)\big)^2}{\big((2N+1)\sin
  \xi_j\big)^2}\bigg)\le C,
\end{align}
provided that for all $k\in\{1,\ldots,d\}$ we have $\big((2N+1)\sin\xi_k\big)^2\le2$.
For every $x>0$ we know that
\begin{align*}
  x-\frac{x^3}{3!}<\sin x<x-\frac{x^3}{3!}+\frac{x^5}{5!}.
\end{align*}
This in turn implies that for $0\le x\le 2$ we get
\begin{align}
  \label{eq:25}
  x-\frac{x^3}{6}\le\sin x\le x-\frac{x^3}{8}.
\end{align}
Invoking \eqref{eq:25} twice we obtain
\begin{align}
  \label{eq:11}
\begin{split}
  \sin((2N+1)\xi_k)&\le (2N+1)\xi_k-\frac{\big((2N+1)\xi_k\big)^3}{8}\\
  &\le
  (2N+1)\sin\xi_k+\frac{(2N+1)\xi_k^3}{6}-\frac{\big((2N+1)\xi_k\big)^3}{8}\\
  &=(2N+1)\sin\xi_k-(2N+1)\xi_k^3\bigg(\frac{(2N+1)^2}{8}-\frac{1}{6}\bigg)\\
  &\le (2N+1)\sin\xi_k-\frac{\big((2N+1)\xi_k\big)^3}{10},
\end{split}
\end{align}
since
\[
\frac{(2N+1)^2}{8}-\frac{1}{6}\ge\frac{(2N+1)^2}{10}\Longleftrightarrow(2N+1)^2\ge\frac{20}{3}
\Longleftrightarrow N\ge 1.
\]
Using \eqref{eq:11} and \eqref{eq:26} we see that
\begin{align}
  \label{eq:12}
  \begin{split}
  \frac{\sin((2N+1)\xi_k)}{(2N+1)\sin\xi_k}&\le
                                             1-\frac{\big((2N+1)\xi_k\big)^3}{10(2N+1)\sin\xi_k}\\
                                           &=1-\frac{\big((2N+1)\sin\xi_k\big)^2}{10}\frac{\xi_k^3}{(\sin\xi_k)^3}\\
                                           &\le1-\frac{\big((2N+1)\sin\xi_k\big)^2}{10}.
                                           \end{split}
\end{align}
Using now \eqref{eq:12} we can dominate the left hand side of \eqref{eq:15} and obtain
\begin{align}
  \label{eq:16}
  \begin{split}
  \sum_{k=1}^d\big((2N+1)\sin\xi_k\big)^2&\cdot\bigg(\prod_{j=1}^d\frac{\big(\sin((2N+1)\xi_j)\big)^2}{\big((2N+1)\sin
    \xi_j\big)^2}\bigg)\\
  &\le\sum_{k=1}^d\big((2N+1)\sin\xi_k\big)^2\cdot\prod_{j=1}^d\bigg(1-\frac{\big((2N+1)\sin\xi_j\big)^2}{10}\bigg),
  \end{split}
\end{align}
provided that for all $k\in\{1,\ldots,d\}$ we have $\big((2N+1)\sin\xi_k\big)^2\le2$.
Changing the variables in \eqref{eq:16} by taking $a_k=\big((2N+1)\sin\xi_k\big)^2$ we have to
show that there is a constant $C>0$ such that for any $d\in\NN$ one has 
\begin{align}
  \label{eq:17}
  F(a_1,\ldots, a_d)=(a_1+\ldots+a_d)\prod_{j=1}^d\bigg(1-\frac{a_j}{10}\bigg)\le C
\end{align}
for all $0\le a_k\le 2$ with $k\in\{1,\ldots,d\}$.
To obtain \eqref{eq:17} we note that the estimates
$$1-u\le e^{-u},\qquad ue^{-u}\le 1,$$
which are valid for $u\ge 0,$ lead to
$$ F(a_1,\ldots, a_d)\le (a_1+\ldots+a_d) \exp\left(-\frac{a_1+\ldots+a_d}{10}\right)\le 10.$$
This completes the proof of \eqref{eq:17} and the proof of Lemma \ref{lem:3}. 
\end{proof}

The inequality \eqref{eq:6} immediately implies the first inequality
in \eqref{eq:4}.  We now provide the remaining two inequalities in
\eqref{eq:4}. These are restated as Lemma \ref{lem:4} below. Here we
shall appeal to Lemma \ref{lem:3} and properties
of $\nu_t$ stated in \eqref{eq:19} and \eqref{eq:91}.
  \begin{lemma}
	\label{lem:4}
	There exists a constant $C>0$ such that for every $d, N, N_1, N_2\in\NN$ and
	for every $\xi\in\TT^d$ we have
	\begin{align}
	\label{eq:21}
	|\mathfrak m_N(\xi)-1|\le CN|\xi|,
	\end{align}
	and  
	\begin{align}
	\label{eq:22}
	|\mathfrak m_{N_1}(\xi)-\mathfrak m_{N_2}(\xi)|\le C|N_1-N_2|\max\big\{N_1^{-1},N_2^{-1}\big\}.
	\end{align}
\end{lemma}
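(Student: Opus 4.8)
The idea is to compare the discrete multiplier $\mathfrak m_N$ with its continuous analogue $\nu_t$, for which the desired estimates \eqref{eq:19} and \eqref{eq:91} are already available, and to absorb the error coming from this comparison using the decay estimate \eqref{eq:6} from Lemma \ref{lem:3}. Recall that
\[
\mathfrak m_N(\xi)=\prod_{k=1}^d\frac{\sin((2N+1)\pi\xi_k)}{(2N+1)\sin(\pi\xi_k)}
\qquad\text{and}\qquad
\nu_N(\xi)=\prod_{k=1}^d\frac{\sin((2N+1)\pi\xi_k)}{(2N+1)\pi\xi_k},
\]
so that each one-dimensional factor of $\mathfrak m_N$ is obtained from the corresponding factor of $\nu_N$ by replacing $\pi\xi_k$ with $\sin(\pi\xi_k)$ in the denominator. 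Writing $r_k:=\dfrac{\pi\xi_k}{\sin(\pi\xi_k)}\in[1,\pi/2]$ for $\xi\in\TT^d$ identified with $[-1/2,1/2)^d$, we have the clean multiplicative relation
\[
\mathfrak m_N(\xi)=\Big(\prod_{k=1}^d r_k\Big)\,\nu_N(\xi).
\]
From \eqref{eq:26} one gets $0\le r_k-1\le C\xi_k^2$ (Taylor expansion of $\sin$), which is the quantitative input controlling the correction factor.

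\textbf{Proof of \eqref{eq:21}.} First I would dispose of the case $N|\xi|\ge 1$: then the trivial bound $|\mathfrak m_N(\xi)-1|\le 2\le 2N|\xi|$ finishes it. So assume $N|\xi|\le 1$, hence in particular $|\xi|\le 1$ and each $|\xi_k|\le 1$. Write
\[
\mathfrak m_N(\xi)-1=\big(\mathfrak m_N(\xi)-\nu_N(\xi)\big)+\big(\nu_N(\xi)-1\big).
\]
The second term is bounded by $C(2N+1)|\xi|\le CN|\xi|$ directly from the middle inequality in \eqref{eq:19}. For the first term, using $\mathfrak m_N(\xi)=(\prod_k r_k)\nu_N(\xi)$,
\[
|\mathfrak m_N(\xi)-\nu_N(\xi)|=\Big|\prod_{k=1}^d r_k-1\Big|\cdot|\nu_N(\xi)|.
\]
Now $\prod_k r_k-1\le \exp\big(\sum_k (r_k-1)\big)-1\le \exp\big(C\sum_k\xi_k^2\big)-1=\exp(C|\xi|^2)-1\le C|\xi|^2$, since $|\xi|\le 1$ (here $u\mapsto e^{Cu}-1\le C'u$ on bounded $u$). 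Thus $|\mathfrak m_N(\xi)-\nu_N(\xi)|\le C|\xi|^2\,|\nu_N(\xi)|\le C|\xi|^2\le CN|\xi|\cdot|\xi|/N\le CN|\xi|$ when $N|\xi|\le1$ — more simply, $C|\xi|^2\le C|\xi|\le C N|\xi|$ since $N\ge1$. Combining the two pieces gives \eqref{eq:21}.

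\textbf{Proof of \eqref{eq:22}.} By symmetry assume $\max\{N_1^{-1},N_2^{-1}\}=N_1^{-1}$, i.e. $N_1\le N_2$. As above I split into two regimes according to the size of $N_1|\xi|$. If $N_1|\xi|\ge c$ for a suitable small absolute constant $c$, then Lemma \ref{lem:3} gives $|\mathfrak m_{N_j}(\xi)|\le \frac{C}{N_j|\xi|}\le \frac{C}{N_1|\xi|}\le C/c$, and moreover $|\mathfrak m_{N_1}(\xi)|+|\mathfrak m_{N_2}(\xi)|\le \frac{2C}{N_1|\xi|}$; since trivially $|N_1-N_2|N_1^{-1}\ge$ is not directly comparable, I instead note that in this regime it is cleaner to bound $|\mathfrak m_{N_1}(\xi)-\mathfrak m_{N_2}(\xi)|\le |\mathfrak m_{N_1}(\xi)|+|\mathfrak m_{N_2}(\xi)|\le \frac{C}{N_1|\xi|}$ and observe $\frac{1}{N_1|\xi|}\le\frac{1}{c}$, so it suffices to check $\frac{1}{N_1|\xi|}\le C\frac{|N_1-N_2|}{N_1}$ — which however can fail when $N_1=N_2$. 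So in the overlapping case $N_1=N_2$ the left side is $0$ and there is nothing to prove; when $N_1<N_2$ one has $|N_1-N_2|\ge1$, hence $C\frac{|N_1-N_2|}{N_1}\ge\frac{C}{N_1}\ge\frac{C|\xi|}{N_1|\xi|}\cdot\frac{1}{|\xi|}$... this is where one must be slightly careful. The clean route is the complementary regime: assume $N_1|\xi|\le c$. Then also $N_2$ could still be large, but write
\[
\mathfrak m_{N_1}-\mathfrak m_{N_2}=\Big(\prod_k r_k\Big)\big(\nu_{N_1}-\nu_{N_2}\big),
\]
and apply \eqref{eq:91}: $|\nu_{N_1}(\xi)-\nu_{N_2}(\xi)|\le C|N_1-N_2|\max\{N_1^{-1},N_2^{-1}\}=C|N_1-N_2|N_1^{-1}$. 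Since $\prod_k r_k\le \exp(C|\xi|^2)$ and in this regime $|\xi|\le c/N_1\le c$, we have $\prod_k r_k\le C$, hence $|\mathfrak m_{N_1}(\xi)-\mathfrak m_{N_2}(\xi)|\le C|N_1-N_2|N_1^{-1}$, as desired. In the remaining regime $N_1|\xi|\ge c$, use $|\mathfrak m_{N_j}(\xi)|\le\frac{C}{N_j|\xi|}$ (Lemma \ref{lem:3}) together with $|N_1-N_2|N_1^{-1}\ge ?$; if $N_1=N_2$ the bound is trivial, and if $N_1<N_2$ then $|N_1-N_2|\ge1$ and
\[
|\mathfrak m_{N_1}(\xi)-\mathfrak m_{N_2}(\xi)|\le\frac{C}{N_1|\xi|}+\frac{C}{N_2|\xi|}\le\frac{2C}{N_1|\xi|}\le\frac{2C}{c}\cdot 1\le\frac{2C}{c}\cdot\frac{|N_1-N_2|}{N_1}\cdot N_1\le\ldots
\]
Here the last manipulation is not clean, so instead I would simply observe that $\frac{1}{N_1|\xi|}\le\frac{1}{c}$ and $\frac{|N_1-N_2|}{\max\{N_1,N_2\}^{-1}\cdot(\ldots)}$... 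The honest fix: in the regime $N_1|\xi|\ge c$ one has $|\xi|\ge c/N_1\ge c/N_1$, so $N_1\ge c/|\xi|$; then $\frac{|N_1-N_2|}{N_1}$ need not be large. The correct comparison is via the \emph{derivative} bound as in the derivation of \eqref{eq:91}: write $\mathfrak m_{N_1}(\xi)-\mathfrak m_{N_2}(\xi)=(\prod_k r_k)(\nu_{N_1}(\xi)-\nu_{N_2}(\xi))$ \emph{unconditionally}, and then \eqref{eq:91} plus $\prod_k r_k\le\exp(C|\xi|^2)\le C\exp(CN_1^2|\xi|^2)\le$... which blows up.

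\textbf{Resolution and main obstacle.} The clean argument avoids case analysis entirely for \eqref{eq:22}: one has, \emph{for all} $\xi\in\TT^d$, the identity $\mathfrak m_{N}(\xi)=(\prod_{k=1}^d r_k)\,\nu_N(\xi)$ with $r_k=r_k(\xi_k)$ \emph{independent of $N$}. Therefore
\[
\mathfrak m_{N_1}(\xi)-\mathfrak m_{N_2}(\xi)=\Big(\prod_{k=1}^d r_k\Big)\big(\nu_{N_1}(\xi)-\nu_{N_2}(\xi)\big),
\]
and the only point to control is $\prod_{k=1}^d r_k$. But this product is \emph{not} bounded uniformly in $d$ and $\xi$ (it can be as large as $(\pi/2)^d$). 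This is the main obstacle, and it shows the naive comparison fails globally; one genuinely needs the decay of $|\nu_{N_j}(\xi)|$ to beat the growth of $\prod_k r_k$, or one needs to restrict attention to the region where $\prod_k r_k$ is controlled. The right strategy, which is what I would ultimately carry out, is: on the region $|\xi|\le 1/\sqrt{d}$ (say), $\prod_k r_k\le\exp(C\sum_k\xi_k^2)\le\exp(C|\xi|^2)\le C$, so the comparison with $\nu$ via \eqref{eq:91} and \eqref{eq:19} works directly; on the complementary region $|\xi|\ge 1/\sqrt d$ one abandons the comparison and argues directly from the product formula for $\mathfrak m_N$, exploiting $|\mathfrak m_{N_j}(\xi)|\le \frac{C}{N_j|\xi|}$ from Lemma \ref{lem:3} and the one-variable mean value theorem applied factor-by-factor to $N\mapsto \frac{\sin((2N+1)\pi\xi_k)}{(2N+1)\sin(\pi\xi_k)}$, which has derivative in $N$ bounded by $\frac{C}{N}\cdot\frac{1}{(2N+1)|\sin(\pi\xi_k)|}\le\frac{C}{N(N|\xi_k|)}$, summing the telescoped differences over the $d$ factors against the product of the remaining $|\mathfrak m|$-factors and using the decay to control the number of "large" coordinates. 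The analogous dichotomy handles \eqref{eq:21}: for $|\xi|$ small the comparison with $\nu_N$ gives it immediately, and for $|\xi|$ not small the trivial bound $|\mathfrak m_N(\xi)-1|\le 2\le CN|\xi|$ suffices since then $N|\xi|\gtrsim 1$. I expect the genuinely delicate step to be the factor-by-factor mean value estimate combined with counting large coordinates in the region $|\xi|\ge 1/\sqrt d$; everything else is bookkeeping built on Lemma \ref{lem:3} and \eqref{eq:19}–\eqref{eq:91}.
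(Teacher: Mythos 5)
Your proof of \eqref{eq:21} is correct and is in the same spirit as the paper's (compare $\mathfrak m_N$ with $\nu_N$), though with a slightly different factorization; it works because after the reduction to $N|\xi|\le 1$ you have $|\xi|\le 1$, which keeps $\prod_k r_k-1$ small.

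For \eqref{eq:22} there is a genuine gap, which you correctly identify but resolve in the wrong direction. You factor
\[
\mathfrak m_N(\xi)-\nu_N(\xi)=\Big(\prod_{k=1}^d r_k-1\Big)\nu_N(\xi),
\]
which forces you to control $\prod_k r_k-1$, a quantity with no dimension-free bound when $|\xi|$ is not small. The paper instead exploits the other factorization of the \emph{same} error term:
\[
\mathfrak m_N(\xi)-\nu_N(\xi)=\mathfrak m_N(\xi)\,\big(1-\nu_0(\xi)\big),\qquad \nu_0(\xi)=\prod_{k=1}^d\frac{\sin(\pi\xi_k)}{\pi\xi_k}.
\]
Here the correction factor is $1-\nu_0(\xi)$, and Bourgain's dimension-free estimate \eqref{eq:19} with $t=0$ gives $|1-\nu_0(\xi)|\le C|\xi|$ for \emph{all} $\xi\in\TT^d$ — this is precisely the asymmetry you are missing: $|1-\nu_0|\lesssim|\xi|$ holds dimension-freely, while $|\nu_0^{-1}-1|=\prod_k r_k-1$ does not. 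Combined with $|\mathfrak m_N(\xi)|\le C\min\{1,(N|\xi|)^{-1}\}$ from Lemma \ref{lem:3}, this yields
\[
|\mathfrak m_N(\xi)-\nu_N(\xi)|\le C\min\big\{1,(N|\xi|)^{-1}\big\}|\xi|\le \frac{C}{N}
\]
for \emph{every} $\xi\in\TT^d$, with no case splitting. Then \eqref{eq:22} is immediate from the triangle inequality
\[
|\mathfrak m_{N_1}-\mathfrak m_{N_2}|\le|\mathfrak m_{N_1}-\nu_{N_1}|+|\mathfrak m_{N_2}-\nu_{N_2}|+|\nu_{N_1}-\nu_{N_2}|
\]
together with \eqref{eq:91}, once one notes that for $N_1\ne N_2$ (the case $N_1=N_2$ being trivial) the integers satisfy $|N_1-N_2|\ge1$, so $C/N_j\le C\max\{N_1^{-1},N_2^{-1}\}\le C|N_1-N_2|\max\{N_1^{-1},N_2^{-1}\}$. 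Your proposed "resolution" via a dichotomy at $|\xi|\simeq d^{-1/2}$ and a factor-by-factor mean value estimate is unnecessarily heavy: the correct fix is a single algebraic rearrangement of the error term, after which everything is global and dimension-free. (The same rearrangement would also streamline your proof of \eqref{eq:21}, eliminating the need for the $N|\xi|\ge 1$ versus $N|\xi|<1$ split.)
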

\begin{proof}
By \eqref{eq:19} with $t=0$ we have
  \[
\bigg|1-\prod_{k=1}^d\frac{\sin (\pi \xi_k) }{\pi  \xi_k}\bigg|=|1-\nu_0(\xi)|\leq C
  |\xi|.
  \]
Therefore using \eqref{eq:6} we obtain
\begin{align}
  \label{eq:23}
  \begin{split}  
   |\mathfrak m_N(\xi)-\nu_N(\xi)|&= \bigg|\frac{1}{(2N+1)^d}\bigg(\sum_{n\in Q_N}e^{2\pi i
      \xi\cdot n}-\int_{[-N-1/2, N+1/2]^d}e^{2\pi i
      \xi\cdot x}{\rm d}x\bigg)\bigg|\\
    &=\bigg|\frac{1}{(2N+1)^d}\sum_{n\in Q_N}\bigg(e^{2\pi i
    	\xi\cdot n}-\int_{[n-1/2, n+1/2]^d}e^{2\pi i
      \xi\cdot x}{\rm d}x\bigg)\bigg|\\
    &= \bigg|\frac{1}{(2N+1)^d}\sum_{n\in Q_N}e^{2\pi i
      \xi\cdot n}\bigg|\cdot\bigg|1-\prod_{k=1}^d\frac{\sin(\pi \xi_k) }{\pi  \xi_k}\bigg|\\
    &\le C\min\big\{1, \big(N|\xi|\big)^{-1}\big\} |\xi|.
  \end{split}
  \end{align}
  Hence \eqref{eq:21} follows, since by \eqref{eq:23} and \eqref{eq:19} with $t=N$ we get
  \[
|\mathfrak m_N(\xi)-1|\le |\mathfrak m_N(\xi)-\nu_N(\xi)|+|\nu_N(\xi)-1|\le CN|\xi|.
    \]
    
To prove \eqref{eq:22} we will use
  \eqref{eq:91}  and \eqref{eq:23}. We may assume that $N_1\not= N_2$, otherwise there is nothing to do.
     Then 
  \begin{align*}
  |\mathfrak m_{N_1}(\xi)-\mathfrak m_{N_2}(\xi)|&\le|\mathfrak m_{N_1}(\xi)-\nu_{N_1}(\xi)|+
  |\mathfrak m_{N_2}(\xi)-\nu_{N_2}(\xi)|+|\nu_{N_1}(\xi)-\nu_{N_2}(\xi)|\\
  &\le C\sum_{j=1}^2\min\big\{1, \big(N_j|\xi|\big)^{-1}\big\} |\xi|+C|N_1-N_2|\max\big\{N_1^{-1},N_2^{-1}\big\}\\
  &\le 3C|N_1-N_2|\max\big\{N_1^{-1},N_2^{-1}\big\}.
  \end{align*}
  and the proof of Lemma \ref{lem:4} is completed.
  \end{proof}

  \section{Maximal estimates: proofs of Theorem \ref{thm:1} and  Theorem \ref{thm:0}}
\label{sec:4}
In this section we will be concerned with proving Theorem \ref{thm:1}
and Theorem \ref{thm:0}.  Both of the theorems will be a consequence
of a variant of an almost orthogonality principle, which was used by
Carbery \cite[Theorem 2]{Car1} to prove \eqref{eq:113} for
$p\in(3/2, \infty]$.  In Proposition \ref{prop:2} we will adjust the concept from \cite[Theorem 2]{Car1} to the
discrete setup, nevertheless the main idea remains
the same. These ideas will also be employed  in the next section to
estimate $r$-variations.  We begin with the proof of Theorem
\ref{thm:0}, which is simpler. In fact, we prove a stronger result
which will work for any lacunary sequence.\footnote{A sequence
  $(a_n: n\in\NN_0)\subseteq(0, \infty)$ is called lacunary, if
  $a:=\inf_{n\in\NN_0}\frac{a_{n+1}}{a_n}>1$.}

 \begin{theorem}
  \label{thm:0'}
  Let $(a_n: n\in\NN_0)\subseteq(0, \infty)$ be a lacunary sequence. Then for every $p\in(1, \infty]$ there exists a constant $C_p>0$ such that for
  every $d\in\NN$ and every $f\in\ell^p(\ZZ^d)$ we have
  \begin{align}
    \label{eq:87}
\big\|\sup_{n\in\NN_0}|\mathcal M_{a_n}f|\big\|_{\ell^p}=    \big\|\sup_{n\in\NN_0}|\mathcal M_{a_n}^Qf|\big\|_{\ell^p}\le  C_p\|f\|_{\ell^p}.
  \end{align}
  The implied constant $C_p$ may also depend on the quantity $a:=\inf_{n\in\NN_0}\frac{a_{n+1}}{a_n}>1$, which  corresponds to the lacunary sequence $(a_n: n\in\NN_0)$.      
\end{theorem}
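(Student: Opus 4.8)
The plan is to compare the cube averages $\mathcal M_{a_n}$ with a heat semigroup on $\ZZ^d$ and to absorb the discrepancy into a square function controlled by the Fourier estimates of Proposition \ref{pro:FourEst}. First I would set up the semigroup. Let $\mathcal L=\sum_{j=1}^{d}\bigl(\mathrm{Id}-\tfrac12(\tau_j+\tau_j^{-1})\bigr)$, where $\tau_j$ is the unit shift in the $j$-th coordinate, and put $P_t=e^{-t\mathcal L}$. On $\ZZ$ the factor $e^{-t(\mathrm{Id}-\frac12(\tau+\tau^{-1}))}$ is convolution with the probability sequence $(e^{-t}I_k(t))_{k\in\ZZ}$ built from modified Bessel functions, so $P_t$ is convolution with a product of probability measures; hence $(P_t)_{t>0}$ is a symmetric diffusion semigroup in the sense of \cite[Chapter III]{Ste1}. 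Its Fourier symbol is $e^{-t\omega(\xi)}$ with $\omega(\xi)=2\sum_{j=1}^{d}\sin^2(\pi\xi_j)$, and \eqref{eq:26} gives $8|\xi|^2\le\omega(\xi)\le 2\pi^2|\xi|^2$ for all $\xi\in\TT^d$, uniformly in $d$. Consequently \cite[Chapter III]{Ste1} supplies, with constants depending only on $p$, the dimension-free maximal bound $\|\sup_{t>0}|P_tf|\|_{\ell^p}\le C_p\|f\|_{\ell^p}$ for $p\in(1,\infty]$ together with the dimension-free Littlewood--Paley inequalities for $P_t$ for $p\in(1,\infty)$.

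Next comes the decomposition and the $\ell^2$ estimate. Since $(a_n)$ is lacunary, only finitely many $a_n$ lie in $(0,1)$, and for those $\mathcal M_{a_n}$ is the identity operator, so we may assume $a_n\ge1$ throughout. Write $\mathcal M_{a_n}=P_{a_n^2}+R_n$ with $R_n=\mathcal M_{a_n}-P_{a_n^2}$, so that
\begin{align*}
\sup_{n\in\NN_0}|\mathcal M_{a_n}f|\le\sup_{t>0}|P_tf|+\sup_{n\in\NN_0}|R_nf|;
\end{align*}
the first term is already under control on $\ell^p$, $p\in(1,\infty]$. The symbol $\mathfrak r_n(\xi)=\mathfrak m_{a_n}(\xi)-e^{-a_n^2\omega(\xi)}$ satisfies
\begin{align*}
|\mathfrak r_n(\xi)|\le C\min\bigl\{a_n|\xi|,(a_n|\xi|)^{-1}\bigr\}\qquad(\xi\in\TT^d),
\end{align*}
because for $a_n|\xi|\le1$ one has $|\mathfrak m_{a_n}(\xi)-1|\le Ca_n|\xi|$ by \eqref{eq:4} and $|e^{-a_n^2\omega(\xi)}-1|\le a_n^2\omega(\xi)\le 2\pi^2(a_n|\xi|)^2\le Ca_n|\xi|$, while for $a_n|\xi|\ge1$ one has $|\mathfrak m_{a_n}(\xi)|\le C(a_n|\xi|)^{-1}$ by \eqref{eq:4} and $e^{-a_n^2\omega(\xi)}\le e^{-8(a_n|\xi|)^2}\le C(a_n|\xi|)^{-1}$. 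By Plancherel and lacunarity, since $\sum_{n}\min\{a_n|\xi|,(a_n|\xi|)^{-1}\}^2$ is bounded by a constant depending only on $a=\inf_n a_{n+1}/a_n$, uniformly in $\xi$ and $d$, this yields
\begin{align*}
\Bigl\|\bigl(\textstyle\sum_{n\in\NN_0}|R_nf|^2\bigr)^{1/2}\Bigr\|_{\ell^2}\le C_a\|f\|_{\ell^2},
\end{align*}
and in particular $\|\sup_n|R_nf|\|_{\ell^2}\le C_a\|f\|_{\ell^2}$.

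It remains to upgrade this to $\ell^p$. Let $\Delta_j=P_{2^j}-P_{2^{j+1}}$ ($j\in\ZZ$) be the Littlewood--Paley pieces of $P_t$, which resolve the identity on $\ell^p$ ($p<\infty$) and whose square function is comparable to $\|\cdot\|_{\ell^p}$ with dimension-free constants. Writing $R_n=\sum_j R_n\Delta_j$ and estimating the symbol $\mathfrak r_n(\xi)\bigl(e^{-2^j\omega(\xi)}-e^{-2^{j+1}\omega(\xi)}\bigr)$ through the bound above, a short case analysis gives $\|R_n\Delta_j\|_{\ell^2\to\ell^2}\le C\,2^{-|j-2\log_2 a_n|/2}$ for all $j\in\ZZ$ and $n\in\NN_0$. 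This is exactly the almost-orthogonality input required by the $\ZZ^d$-adaptation of Carbery's principle \cite[Theorem 2]{Car1}, stated as Proposition \ref{prop:2}: in the lacunary setting there is no supremum over a continuum of scales inside a dyadic block, so the remaining hypotheses of Proposition \ref{prop:2} reduce to the trivial contractivity $\|\mathcal M_{a_n}\|_{\ell^p\to\ell^p}\le1$ and to the dimension-free Littlewood--Paley theory of $P_t$ recalled above. Feeding these in produces $\|\sup_n|R_nf|\|_{\ell^p}\le C_p\|f\|_{\ell^p}$ for every $p\in(1,\infty)$, with $C_p$ depending also on $a$ but not on $d$; combined with the semigroup maximal bound and the trivial case $p=\infty$, this proves \eqref{eq:87}.

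The main obstacle is the last step: transferring the $\ell^2$ almost-orthogonality to $\ell^p$ with constants independent of $d$. A naive vector-valued Calderón--Zygmund argument on $\ZZ^d$ would produce dimension-dependent constants, so it is essential that the Littlewood--Paley square function used here comes from the heat semigroup $P_t$, for which dimension-free bounds are available via the abstract theory of symmetric diffusion semigroups; Carbery's almost-orthogonality principle is precisely the device that performs this transfer, and it is the absence of a within-block continuum of scales that lets the argument run for all $p\in(1,\infty]$ rather than only $p\in(3/2,\infty]$ as in Theorem \ref{thm:1}.
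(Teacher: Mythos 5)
Your proof is correct and follows essentially the same strategy as the paper's: compare $\mathcal M_{a_n}$ with a symmetric diffusion semigroup on $\ZZ^d$, control the discrepancy on $\ell^2$ via the Fourier estimates of Proposition \ref{pro:FourEst}, and then upgrade to $\ell^p$ through the almost-orthogonality machinery of Proposition \ref{prop:2}, whose non-trivial hypothesis in the lacunary case reduces to the $\ell^2$ bound \eqref{eq:60} together with the maximal ergodic theorem for the semigroup. The one genuine deviation is the choice of semigroup. You work with the discrete heat semigroup $P_t = e^{-t\mathcal L}$, whose symbol $e^{-t\omega(\xi)}$ behaves like $e^{-ct|\xi|^2}$, and therefore compare $\mathcal M_{a_n}$ with $P_{a_n^2}$ in order to match scales. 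The paper instead uses the subordinated Poisson semigroup $e^{-t\mathcal L^{1/2}}$, whose symbol $e^{-t|\xi|_{\st}}$ behaves like $e^{-ct|\xi|}$, so that one may take $H_t = P_t$ with no rescaling. Both constructions are symmetric diffusion semigroups in Stein's sense and yield dimension-free maximal and Littlewood--Paley estimates from \cite[Chapter III]{Ste1}, and the core almost-orthogonality computation — the error symbol is at most $C\min\{a_n|\xi|, (a_n|\xi|)^{-1}\}$ while the Littlewood--Paley pieces carry exponential decay off the matching scale — is identical. Your heat version has the advantage of being more concrete, with a one-dimensional kernel $(e^{-t}I_k(t))_k$ and no need for a subordination formula; the paper's Poisson version keeps the Littlewood--Paley index linearly aligned with the lacunary index, so that the projections $S_{j+n}$ plug into the statement of Proposition \ref{prop:2} verbatim, which avoids the mild $j\approx 2\log_2 a_n$ re-indexing your heat scaling forces when you invoke that proposition.
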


We note that by passing to a denser sequence, by a suitable completion of gaps in the underlying sequence, we can assume that the lacunary sequence  $(a_n: n\in\NN_0)$ in Theorem \ref{thm:0'} satisfies additionally an upper bound and is defined on $\ZZ$. Namely, in the rest of the paper, we will assume for all $n\in\ZZ$   that   
\begin{align}
  \label{eq:47}
1<  a\le\frac{a_{n+1}}{a_n}\le a^2.
\end{align}
 
In order to prove our maximal and variational results  we have to construct a suitable
semigroup on $\ZZ^d$, which will be adjusted to our problems. This will be provided in the next paragraph.

\subsection{A diffusion semigroup and corresponding Littlewood--Paley theory}
For every $t\ge0$ let $P_t$ be the Poisson semigroup on
  $\ZZ^d$, which is a convolution operator  defined on the Fourier transform side by the multiplier
  \[
\mathfrak p_t(\xi)=e^{-t|\xi|_{\st}}
    \]
  for every $\xi\in\TT^d$, where 
  \begin{equation*}
  |\xi|_{\st}=\Big(\sum_{k=1}^d (\sin (\pi \xi_k))^2\Big)^{1/2}.
  \end{equation*}
  By  \eqref{eq:26}, for every $\xi\in\TT^d\equiv[-1/2, 1/2)^d$,  we have
  \begin{align}
    \label{eq:92}
    |\xi|\leq |\xi|_{\st}\leq \pi|\xi|.
  \end{align}    
 Let $\{e_1, \ldots, e_d\}$ be the standard basis in $\ZZ^d$. For every $k\in\{1,\ldots, d\}$ and $x\in \ZZ^d$  let
  $$\Delta_k f(x)=f(x)-f(x+e_k)$$
  be the discrete partial  derivative on $\ZZ^d$. The adjoint of $\Delta_k$ is given by $\Delta_k^*f(x)=f(x)-f(x-e_k)$
  and the discrete partial Laplacian is defined as 
  $$\mathcal L_kf(x)=\frac14\Delta_k^* \Delta_kf(x).$$
  
  Then we see that
  \[
\mathcal L_k f(x)=\frac12f(x)-\frac14\big(f(x+e_k)+f(x-e_k)\big),
    \]
   and for every $\xi\in\TT^d$ we obtain
  $$\widehat{(\mathcal L_k f)}(\xi)=\frac{1-\cos (2\pi \xi_k)}{2}\hat{f}(\xi)=(\sin(\pi
  \xi_k))^2\hat{f}(\xi).$$
  
For every $x\in\ZZ^d$ we introduce the maximal function 
 $$P_{*}f(x)=\sup_{t>0}P_t|f|(x)$$
 and the square function
$$g(f)(x)=\left(\int_{0}^{\infty}t\Big|\frac{\rm d}{{\rm d}t} P_t
  f(x)\Big|^2{\rm d}t\right)^{1/2}$$
associated
with the Poisson semigroup $P_t$.

  \begin{lemma}
  	\label{lem:b1}
        For every $p\in(1,  \infty)$ there exists a constant $C_p>0$, which
        does not depend on $d\in\NN$, such that for every $f\in \ell^p(\ZZ^d)$
        we have
\begin{align}
\label{eq:95}
  \|P_{*}f\|_{\ell^p}\leq C_p\|f\|_{\ell^p}  
\end{align}
and
\begin{align}
  \label{eq:96}
  \|g(f)\|_{\ell^p}\leq C_p\|f\|_{\ell^p}.
\end{align}  	
\end{lemma}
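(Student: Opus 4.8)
The statement to prove is Lemma~\ref{lem:b1}, i.e.\ the dimension-free $\ell^p$ bounds \eqref{eq:95} and \eqref{eq:96} for the Poisson maximal function $P_*$ and the Littlewood--Paley square function $g$ attached to the semigroup $P_t$ with multiplier $e^{-t|\xi|_{\st}}$. The plan is to recognize $(P_t)_{t\ge0}$ as a symmetric diffusion semigroup in the sense of \cite[Chapter~III]{Ste1} and then quote the abstract maximal theorem and Littlewood--Paley $g$-function theorem from that theory, whose constants depend only on $p$ and not on the underlying measure space (hence not on $d$). So the real content is to verify the structural hypotheses: $(P_t)$ should be a semigroup of operators that are simultaneously contractions on every $\ell^q(\ZZ^d)$, $1\le q\le\infty$, that are self-adjoint on $\ell^2$, positivity-preserving, and satisfy $P_t \mathds{1} = \mathds{1}$ (conservativity) together with strong continuity $P_t f\to f$ as $t\to 0$.

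\textbf{Key steps.} First I would write $P_t = \prod_{k=1}^d P_t^{(k)}$ as a $d$-fold ``tensor product'' in the coordinate directions: since $|\xi|_{\st}$ is \emph{not} additive in the $\xi_k$, this factorization is not literally true for the Poisson semigroup, so instead I would use subordination. Namely, the heat semigroup $H_s$ with multiplier $e^{-s|\xi|_{\st}^2} = \prod_{k=1}^d e^{-s(\sin\pi\xi_k)^2}$ \emph{does} factor as a tensor product of one-dimensional semigroups $e^{-s\mathcal L_k}$, each of which is a convolution with a probability kernel on $\ZZ$ (one checks $e^{-s(\sin\pi\xi_k)^2}$ is the Fourier multiplier of a nonnegative, mass-one, symmetric kernel — this follows e.g.\ from $\mathcal L_k$ being $\tfrac14\Delta_k^*\Delta_k$, a nonnegative symmetric operator of norm $\le 1$, with $e^{-s\mathcal L_k}$ expanded as a convergent series of convolution powers of a probability measure). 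Hence $H_s$ is positivity-preserving, $\ell^q$-contractive for all $q$, self-adjoint on $\ell^2$, and conservative ($H_s\mathds{1}=\mathds{1}$ since the multiplier equals $1$ at $\xi=0$). Then I would define $P_t$ from $H_s$ by the classical subordination formula
\begin{align*}
P_t f = \frac{1}{\sqrt\pi}\int_0^\infty \frac{e^{-u}}{\sqrt u}\, H_{t^2/(4u)} f \,\dif u,
\end{align*}
and check on the Fourier side that this reproduces the multiplier $e^{-t|\xi|_{\st}}$ via the identity $e^{-t\lambda} = \tfrac1{\sqrt\pi}\int_0^\infty u^{-1/2} e^{-u} e^{-t^2\lambda^2/(4u)}\dif u$ applied with $\lambda = |\xi|_{\st}$. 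Since subordination is an average of the $H_s$ against a probability density, all the structural properties (positivity, $\ell^q$-contractivity, self-adjointness, conservativity) pass to $P_t$; strong continuity on $\ell^p$ for $p<\infty$ follows from that of $H_s$, or directly from $e^{-t|\xi|_{\st}}\to 1$ boundedly. This establishes that $(P_t)$ is a symmetric diffusion semigroup. Then \eqref{eq:95} is Stein's maximal theorem \cite[Chapter~III, \S1]{Ste1} and \eqref{eq:96} is the Littlewood--Paley $g$-function inequality \cite[Chapter~III, \S2]{Ste1}, both with $p$-dependent but dimension-free constants; one should note that the $g$-function there is usually defined with $\frac{\partial}{\partial t}P_t$ and the estimate $\|g(f)\|_{\ell^p}\le C_p\|f\|_{\ell^p}$ for $p\in(1,\infty)$ (the reverse inequality, not needed here, would require $\int f=0$ or a correction term) is exactly what is claimed.

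\textbf{Main obstacle.} The only genuinely non-routine point is verifying that the one-dimensional operators $e^{-s\mathcal L_k}$ are given by convolution with \emph{nonnegative} kernels of total mass one on $\ZZ$ — equivalently that $(P_t)$ and $(H_s)$ are positivity-preserving — because the abstract machinery of \cite{Ste1} hinges on this Markovian property. This is where the explicit structure $\mathcal L_k = \tfrac12 I - \tfrac14(\text{shift}+\text{shift}^{-1})$ is used: $\tfrac14(\text{shift}+\text{shift}^{-1})$ is averaging against a probability measure and has operator norm $\le 1$, so $e^{-s\mathcal L_k} = e^{-s/2}\sum_{m\ge0}\frac{(s/4)^m}{m!}(\text{shift}+\text{shift}^{-1})^m$ is manifestly a positive operator fixing constants. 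Everything else — the tensor product, the subordination identity, the continuity — is bookkeeping, and the dimension-freeness is automatic once the semigroup is recognized as symmetric diffusion, since the constants in \cite[Chapter~III]{Ste1} are universal.
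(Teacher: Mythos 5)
Your proposal is correct and follows essentially the same route as the paper: verify that $(P_t)$ is a symmetric diffusion semigroup by showing each $e^{-s\mathcal L_k}$ is Markovian via the series expansion $e^{-s/2}\sum_n \frac{(s/2)^n}{n!}\mathcal P_k^n$, use commutativity to pass to $\mathcal L=\sum_k\mathcal L_k$, recover $P_t=e^{-t\mathcal L^{1/2}}$ by subordination, and then invoke Stein's abstract maximal and Littlewood--Paley theorems. The only cosmetic difference is that you check the subordination identity directly on the Fourier multiplier side, whereas the paper phrases it via the spectral theorem; for a Fourier-multiplier generator these are the same computation.
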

For the proof we will have to check that $(P_t: t\ge0)$ is a symmetric diffusion semigroup in the sense of \cite[Chapter III]{Ste1}. For the convenience of the reader we recall the definition of a symmetric diffusion semigroup from \cite[Chapter III, p.65]{Ste1}.  Let $(X, \mathcal B(X), \mu)$ be a $\sigma$-finite measure space. Let $(T_t: t\ge0)$ be a strongly continuous semigroup on $L^2(X)$ which maps $\bigcup_{1\le p\le \infty}L^p(X)$ to $\bigcup_{1\le p\le \infty}L^p(X)$ for every $t\ge0.$ We say that $(T_t: t\ge0)$ is a symmetric diffusion semigroup, if it satisfies for all $t\ge0$ the following conditions: 
\begin{itemize}
\item[(1)] Contraction property: for all $p\in[1, \infty]$ and $f\in L^p(X)$ we have
$\|T_tf\|_{L^p}\le \|f\|_{L^p}$.
\item[(2)] Symmetry property: each $T_t$ is a self-adjoint operator on $L^2(X)$.
\item[(3)] Positivity property: $T_tf\ge0$ if $f\ge0$.
\item[(4)] Conservation property: $T_t1=1$. 
\end{itemize}
  \begin{proof}[Proof of Lemma \ref{lem:b1}] By definition $P_t$ satisfies the semigroup property on $\ell^2.$ Moreover, it is easy to check, working on the Fourier transform side, that for every $f\in\ell^2(\ZZ^d)$ we have
  	\begin{align*}
  	\lim_{t\to0}\|P_tf-f\|_{\ell^2}=0.
  	\end{align*}
 We shall now justify that  $(P_t: t\ge0)$ satisfies conditions (1)-(4) (in particular  the contraction property (1) will ensure that $P_t f \in \ell^p(\ZZ^d)$ if $f\in \ell^p(\ZZ^d)$).  Then, using the  general theory of semigroups from  \cite{Ste1} we obtain \eqref{eq:95} and \eqref{eq:96} with some constant $C_p>0$
 independent of the dimension. 
  	 
    We first note that each $\mathcal L_k$ for $k\in\{1, \ldots, d\}$
    generates a symmetric diffusion semigroup. Indeed,
    $2\mathcal L_k=I-\calP_k,$ where
\[
\calP_kf(x)=\frac12\big(f(x+e_k)+f(x-e_k)\big).
\]
Hence, $\mathcal L_k$ is  self-adjoint on $\ell^2(\ZZ^d)$ and 
\[
e^{-t\mathcal L_k}f=e^{-t/2}\sum_{n=0}^{\infty}\frac{(t/2)^n}{n!}(\calP_k)^n f\quad \text{for all}\quad t\ge0.
\]
This formula obviously yields that $e^{-t\mathcal L_k}$ is
self-adjoint on $\ell^2(\ZZ^d)$, that $e^{-t\mathcal L_k}$ is positive,
and that $e^{-t\mathcal L_k} 1=1$.  Finally, we also deduce the contraction
property for $e^{-t\mathcal L_k}$, since
\[
\|e^{-t\mathcal L_k}f\|_{\ell^p}\leq e^{-t/2}\,\sum_{n=0}^{\infty}\frac{(t/2)^n}{n!}\|(\calP_k)^n f\|_{\ell^p}\leq \|f\|_{\ell^p}.
\]
Summarizing $(e^{-t\mathcal L_k}: t\ge0)$ is a symmetric diffusion
semigroup. Since the operators $\mathcal L_1, \ldots, \mathcal L_d$
are pairwise commuting we have 
\[
\exp(-t(\mathcal L_1+\ldots + \mathcal L_d))=\exp(-t\mathcal L_1)\circ \cdots \circ \exp(-t\mathcal L_d) \quad \text{for all}\quad t\ge0.
\]
Thus $\mathcal L=\mathcal L_1+\ldots +\mathcal L_d$ generates a symmetric diffusion semigroup.
Using the spectral theorem one can easily obtain the subordination formula
\[
e^{-t\mathcal L^{1/2}}=\int_0^\infty e^{-\frac{t^2}{4s}\mathcal
     L} \,(\pi s)^{-1/2}e^{-s}\dif s\quad \text{for all}\quad t\ge0.
\]
From the above we see that
   $(e^{-t\mathcal L^{1/2}}:t\ge0)$ is a symmetric diffusion semigroup as well. In particular we have
   \begin{equation}
   \label{eq:contrLa}
   \|e^{-t\mathcal L^{1/2}}f\|_{\ell^p}\leq \|f\|_{\ell^p}.
   \end{equation}
  	
   Now, in order to complete the proof of the lemma it suffices to note that
\[
P_tf=e^{-t \mathcal L^{1/2}}f,
\]
 since for every $\xi\in\TT^d$ we have
\[
\reallywidehat{\big( e^{-t\mathcal L^{1/2}}f\big)}(\xi)=e^{-t|\xi|_{\st}}\hat{f}(\xi)=\widehat{(P_{t}f)}(\xi).
\]
Applying now the  maximal theorem for semigroups \cite[Chapter
III, Section 3, p.73]{Ste1} together with the Litllewood--Paley theory
for semigroups  \cite[Chapter IV, Section 5, p.111]{Ste1} we
obtain \eqref{eq:95} and \eqref{eq:96} respectively. This completes the proof of Lemma \ref{lem:b1}.
    \end{proof}

Finally, we will need a discrete variant of the  Littlewood--Paley
inequality. For the lacunary sequence $(a_n:n\in \ZZ)$ as in \eqref{eq:47} we define the Poisson projections
$S_n$ by setting
\begin{align*}
S_n=P_{a_n}-P_{a_{n-1}}  
\end{align*}
 for every $n\in\ZZ$.
 Then, clearly for every $f\in \ell^2(\ZZ^d)$, we have
 \begin{align}
   \label{eq:97}
f=\sum_{n\in \ZZ} S_n f.   
 \end{align}
 Using \eqref{eq:96} we show that for every
 $p\in(1, \infty)$ there is $C_p>0$ independent of  $d\in\NN$ such that for every $f\in \ell^p(\ZZ^d)$ we have the following  Littlewood--Paley estimates
 \begin{align}
   \label{eq:98}
   \bigg\|\Big(\sum_{n\in \ZZ}|S_n f|^2\Big)^{1/2}\bigg\|_{\ell^p}\leq C_pa\|f\|_{\ell^p}
 \end{align}
with $a>1$ as in \eqref{eq:47}. In order to establish
 \eqref{eq:98} it suffices to observe that
 $$S_nf(x)=\int_{a_{n-1}}^{a_n}\frac{\rm d}{{\rm d}t} P_t f(x){\rm
   d}t.$$
 Thus by the Cauchy--Schwarz inequality we obtain for every
 $n\in\ZZ$ and $x\in\ZZ^d$ the following bound
\begin{align*}
|S_nf(x)|^2&\leq \bigg(\int_{a_{n-1}}^{a_n}\Big|\frac{\rm d}{{\rm d}t} P_t
  f(x)\Big|{\rm d}t\bigg)^2\le (a_n-a_{n-1})\int_{a_{n-1}}^{a_n}\Big|\frac{\rm d}{{\rm d}t} P_t
  f(x)\Big|^2{\rm d}t \\
  &\le (a^2-1)a_{n-1}\int_{a_{n-1}}^{a_n}\Big|\frac{\rm d}{{\rm d}t} P_t
  f(x)\Big|^2{\rm d}t\le (a^2-1)\int_{a_{n-1}}^{a_n}t\Big|\frac{\rm d}{{\rm d}t} P_t
  f(x)\Big|^2{\rm d}t,
\end{align*}
Now summing over $n\in\ZZ$ and invoking \eqref{eq:96}  we obtain
\eqref{eq:98} and we are done.

\subsection{An almost orthogonality principle}
We now adapt  an almost orthogonality principle from \cite{Car1} for our purposes. The proofs of Theorem \ref{thm:0'} and Theorem \ref{thm:1} will be based on Proposition \ref{prop:2}.
\begin{proposition}
	\label{prop:2}
	Let $(T_{t}: t\in U)$ be a family of linear operators
	defined on $\bigcup_{1\le p\le \infty}\ell^p(\ZZ^d)$ for some index
	set $U\subseteq (0, \infty)$. Suppose that $T_t=M_t-H_t$ for each $t\in U$, where $M_t,H_t$ are positive
	linear operators\footnote{We say that a linear operator $T$ is
		positive if $Tf\ge0$ for every function $f\ge0$.}.   Assume that the following conditions are satisfied.
	\begin{itemize}
		\item  For every $p\in(1, 2]$ we have
		\begin{align}
		\label{eq:57}
		\|H_{*, U}\|_{\ell^p\to\ell^p}<\infty.
		\end{align}
		\item There is $p_0\in(1, 2)$ with the property that for every $p\in (p_0, 2]$ we have
		\begin{align}
		\label{eq:58}
		\sup_{n\in\ZZ}\|T_{*, U_n}\|_{\ell^p\to\ell^p}<\infty,
		\end{align}
		where $U_n=[a_n, a_{n+1})\cap U$ and  $(a_n:{n\in \ZZ})\subseteq(0, \infty)$ is a lacunary sequence obeying \eqref{eq:47}.
		\item There exists a sequence $(b_j: j\in
		\ZZ)$ of positive numbers so that
		$\sum_{j\in\ZZ}b_j^{\rho}=B_{\rho}<\infty$ for every $\rho>0$. Moreover,  for every
		$j\in\ZZ$ we have
		\begin{align}
		\label{eq:60}
		\sup_{\|f\|_{\ell^2}\le1} \bigg\|\Big(\sum_{n\in \ZZ}\sup_{t\in U_n}|T_tS_{j+n}
		f|^2\Big)^{1/2}\bigg\|_{\ell^2}\leq b_j,
		\end{align}
		where $(S_n: n\in\ZZ)$ is the resolution of identity  satisfying \eqref{eq:97} and \eqref{eq:98} for all $p\in(1, \infty)$.
	\end{itemize}
	Then for every $p\in(p_0, 2]$, there exists a constant $\mathbf C_{p}>0$ such
	that
	\begin{align}
	\label{eq:52}
	\sup_{\|f\|_{\ell^p}\le1} \bigg\|\Big(\sum_{n\in \ZZ}\sup_{t\in U_n}|T_t
	f|^2\Big)^{1/2}\bigg\|_{\ell^p}\leq \mathbf C_{p}.
	\end{align}
	The implied constant $\mathbf C_{p}$ depends on the parameters $p, p_0, \rho, B_{\rho}$, the quantities \eqref{eq:57} and  \eqref{eq:58} and the constants $a>1$ and $C_p>0$ as in \eqref{eq:98}. Therefore, $\mathbf C_{p}$ is independent of the dimension as long as the underlying parameters do not depend on $d\in\NN$.  In particular, 
	\begin{align}
	\label{eq:61}
	\|T_{*, U}\|_{\ell^p\to\ell^p}\le \mathbf C_{p}.
	\end{align}
\end{proposition}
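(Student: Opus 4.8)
The plan is to follow Carbery's almost orthogonality scheme adapted to the discrete setting. Since $T_t = M_t - H_t$ with $M_t, H_t$ positive and $H_{*,U}$ bounded on $\ell^p$ for $p \in (1,2]$ by \eqref{eq:57}, the maximal function $T_{*,U}$ is controlled once we control the square function on the left of \eqref{eq:52}; indeed $\sup_{t \in U}|T_t f| \le (\sum_n \sup_{t \in U_n}|T_t f|^2)^{1/2}$, which gives \eqref{eq:61} from \eqref{eq:52}. So the whole game is proving \eqref{eq:52}. The key decomposition is to insert the Littlewood--Paley resolution \eqref{eq:97}: writing $f = \sum_{j \in \ZZ} S_{j+n} f$ for each fixed $n$ (reindexing the sum by the gap $j$), we get
\[
\Big(\sum_{n \in \ZZ} \sup_{t \in U_n}|T_t f|^2\Big)^{1/2} \le \sum_{j \in \ZZ}\Big(\sum_{n \in \ZZ} \sup_{t \in U_n}|T_t S_{j+n} f|^2\Big)^{1/2} =: \sum_{j \in \ZZ} \mathcal{G}_j f.
\]
The strategy is then to bound $\|\mathcal{G}_j\|_{\ell^p \to \ell^p}$ by $C b_j^{\theta}$ for a suitable $\theta = \theta(p) > 0$ when $p \in (p_0, 2]$, and then sum in $j$ using $\sum_j b_j^{\theta} = B_\theta < \infty$ (applying the hypothesis with $\rho = \theta$).

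The bound on $\mathcal{G}_j$ comes from interpolation between two estimates. \textbf{At $p = 2$:} hypothesis \eqref{eq:60} gives exactly $\|\mathcal{G}_j\|_{\ell^2 \to \ell^2} \le b_j$. \textbf{At $p = p_1$ for any fixed $p_1 \in (p_0, 2)$:} I would establish a bound $\|\mathcal{G}_j\|_{\ell^{p_1} \to \ell^{p_1}} \le C_{p_1}$ uniform in $j$. This uniform $\ell^{p_1}$ bound is obtained by a vector-valued / square-function argument: using $\sup_{t \in U_n}|T_t g| \le \sup_{t \in U_n}|M_t g| + \sup_{t \in U_n}|H_t g|$, the $H$-part is handled by \eqref{eq:57} combined with the Littlewood--Paley inequality \eqref{eq:98} (the Fefferman--Stein type vector-valued maximal bound is replaced here by the fact that $H_{*,U}$ is a positive-operator-dominated maximal function, so $\|(\sum_n |H_{*,U_n} S_{j+n}f|^2)^{1/2}\|_{\ell^p}$ is controlled by $\|(\sum_n |S_{j+n}f|^2)^{1/2}\|_{\ell^p} \lesssim \|f\|_{\ell^p}$ after using that a positive operator maps $\ell^p(\ell^2)$ to itself with the same norm), and the $T$-part restricted to a single block $U_n$ is handled by \eqref{eq:58}, again passed through the Littlewood--Paley square function \eqref{eq:98} in both directions (lower square function estimate on the output side, upper on the input side). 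Interpolating the $\ell^2$ bound $b_j$ with the uniform $\ell^{p_1}$ bound $C_{p_1}$ gives $\|\mathcal{G}_j\|_{\ell^p \to \ell^p} \le C_p b_j^{\theta(p)}$ with $\theta(p) = \theta(p, p_1) \in (0,1)$ for every $p \in (p_1, 2)$; since $p_1 > p_0$ was arbitrary, this covers all $p \in (p_0, 2]$.

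Finally, summing over $j \in \ZZ$:
\[
\Big\|\Big(\sum_{n \in \ZZ}\sup_{t \in U_n}|T_t f|^2\Big)^{1/2}\Big\|_{\ell^p} \le \sum_{j \in \ZZ}\|\mathcal{G}_j f\|_{\ell^p} \le C_p \Big(\sum_{j \in \ZZ} b_j^{\theta(p)}\Big)\|f\|_{\ell^p} = C_p B_{\theta(p)}\|f\|_{\ell^p},
\]
which is \eqref{eq:52}, and hence \eqref{eq:61} follows. The main obstacle I anticipate is the uniform-in-$j$ $\ell^{p_1}$ estimate for $\mathcal{G}_j$: one must pass the single-block bound \eqref{eq:58} and the maximal bound \eqref{eq:57} through the Littlewood--Paley machinery in vector-valued ($\ell^2$-valued) form while keeping all constants dimension-free, and one must be careful that the Littlewood--Paley projections $S_n$ interact well with the blocks $U_n$ (both indexed by the same lacunary sequence \eqref{eq:47}). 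The positivity of $M_t$ and $H_t$ is what makes the $\ell^p(\ell^2)$ bounds come for free from the scalar ones, so keeping track of that structure throughout is the delicate point; the rest is bookkeeping and interpolation.
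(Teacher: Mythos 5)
Your overall architecture (insert the Littlewood--Paley resolution, reindex by the gap $j$, bound each piece $\mathcal{G}_j$ by interpolation with the $\ell^2$ almost-orthogonality estimate \eqref{eq:60}, and sum in $j$ using $\sum_j b_j^\rho <\infty$) is the same as in the paper, and the final summation step is fine. The gap is in your crucial Step 4, where you assert a uniform-in-$j$ bound $\|\mathcal{G}_j\|_{\ell^{p_1}\to\ell^{p_1}}\le C_{p_1}$ on the strength of the claim that positivity of $M_t,H_t$ makes the $\ell^{p_1}(\ell^2)$ vector-valued bounds ``come for free'' from the scalar ones in \eqref{eq:57} and \eqref{eq:58}. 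That is not true. Positivity of a \emph{single} linear operator $T$ does give $\big(\sum_n|Tg_n|^2\big)^{1/2}\le T\big(\sum_n|g_n|^2\big)^{1/2}$ pointwise, hence $\ell^p(\ell^2)$-boundedness with the same constant; but $H_{*,U}$ and $T_{*,U_n}$ are \emph{sublinear maximal} operators, and the inequality
\[
\Big\|\Big(\sum_n |H_{*,U}\,g_n|^2\Big)^{1/2}\Big\|_{\ell^{p_1}}\lesssim\Big\|\Big(\sum_n |g_n|^2\Big)^{1/2}\Big\|_{\ell^{p_1}}
\]
is a Fefferman--Stein type statement that does \emph{not} follow from the scalar bound \eqref{eq:57} plus positivity of each $H_t$. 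Nothing in the hypotheses of the proposition gives such a vector-valued bound directly.

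The paper circumvents exactly this obstacle with a bootstrap, and that is the essential ingredient your proposal is missing. It introduces the a priori finite quantity $A_N(s,r)$ (the $\ell^s(\ell^r)$ operator norm of the family $(T_{t_n})$ on finitely many indices), interpolates the diagonal bound $A_N(q,q)\le\sup_n\|T_{*,U_n}\|_{\ell^q\to\ell^q}$ (a scalar application of \eqref{eq:58}) with the $\ell^\infty$-valued bound $A_N(p,\infty)$. The latter is where positivity is actually used, and only in scalar form: with $g=\sup_{|n|\le N}|g_n|$ one bounds
\[
\big\|\sup_{|n|\le N}|T_{t_n}g_n|\big\|_{\ell^p}\le\big\|\sup_{|n|\le N}M_{t_n}g\big\|_{\ell^p}+\|H_{*,U}g\|_{\ell^p}\le\big(\|R_N^{\mathfrak t}\|_{\ell^p\to\ell^p(\ell^2)}+2\|H_{*,U}\|_{\ell^p\to\ell^p}\big)\|g\|_{\ell^p},
\]
which introduces the unknown $\|R_N^{\mathfrak t}\|_{\ell^p\to\ell^p(\ell^2)}$ on the right. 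Interpolating to $A_N(u,2)$, then interpolating against \eqref{eq:60} and summing in $j$, one arrives at a self-referential inequality
\[
\|R_N^{\mathfrak t}\|_{\ell^p\to\ell^p(\ell^2)}\le C\,\big(\|R_N^{\mathfrak t}\|_{\ell^p\to\ell^p(\ell^2)}+2\|H_{*,U}\|_{\ell^p\to\ell^p}\big)^{\theta(1-\rho)}
\]
with $\theta(1-\rho)<1$, which (together with the a priori finiteness of $\|R_N^{\mathfrak t}\|$) closes by absorption. In short: the $\ell^p(\ell^2)$ bound you want in Step 4 \emph{is} the content of the proposition; it cannot be assumed, it must be extracted via this absorption scheme. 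You should replace your Step 4 by Carbery's bootstrap as the paper does, keeping your Littlewood--Paley and $j$-summation skeleton.
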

\begin{proof}
	Fix $p\in(p_0, 2)$. We note that \eqref{eq:52} immediately implies \eqref{eq:61}. To prove \eqref{eq:52} let $\mathfrak T$ be a family of all possible sequences  $\mathfrak t=(t_n: n\in\ZZ)$  such that
	each component  is a function $t_n:\ZZ^d\to U_n$. For each $N\in\NN_0$ and each $\mathfrak t\in\mathfrak T$ we define a linear operator
	$R_{N}^{\mathfrak t}:\ell^p\to \ell^p(\ell^2)$ by setting
	\begin{align*}
	R_N^{\mathfrak t}f=
	\begin{cases}
	T_{t_n}f, &\text{ if } n\in[-N, N]\cap\ZZ,\\
	\ \ 
	0, &\text{ otherwise}.
	\end{cases}
	\end{align*}
	We observe that $\|R_N^{\mathfrak t}\|_{\ell^p\to \ell^p(\ell^2)} \le(2N+1)\sup_{|n|\le N}\|T_{*, U_n}\|_{\ell^p\to \ell^p}<\infty$ for all $p\in(p_0, 2]$, by \eqref{eq:58}. Our aim will be to show that there is a constant $\mathbf C_{p}>0$ such that
	\begin{align}
	\label{eq:54}
	\sup_{N\in\NN_0}\sup_{\mathfrak t\in \mathfrak T}\|R_N^{\mathfrak t}\|_{\ell^p\to \ell^p(\ell^2)}\le \mathbf C_{p}.
	\end{align}
	Assuming momentarily \eqref{eq:54}  we pick a sequence $\mathfrak t_f=(t_{n, f}: n\in\ZZ)$ where each component $t_{n, f}:X\to U_n$ is a function such that $T_{t_{n, f}}f(x)=\sup_{t\in U_n}|T_tf(x)|$ and obtain
	for every  $N\in\NN_0$ that
	\[
	\bigg\|\Big(\sum_{|n|\le N}\sup_{t\in U_n}|T_tf|^2\Big)^{1/2}\bigg\|_{\ell^p}=\bigg\|\Big(\sum_{|n|\le N}|T_{t_{n, f}}f|^2\Big)^{1/2}\bigg\|_{\ell^p}\le \sup_{\mathfrak t\in\mathfrak T}\|R_{N}^{\mathfrak t}\|_{\ell^p\to \ell^p(\ell^2)}\|f\|_{\ell^p}\le \mathbf C_p\|f\|_{\ell^p},
	\]
	where the last inequality follows from \eqref{eq:54}. Then invoking the monotone convergence theorem we obtain the claim in \eqref{eq:52}. To prove \eqref{eq:54} we fix $\mathfrak t=(t_{n}: n\in\ZZ)\in\mathfrak T$ and
	for $s\in(p_0, 2]$ and $r\in [1, \infty]$  let  $A_N(s, r)$ be the best constant in the following inequality
	\begin{align}
	\label{eq:55}
	\bigg\|\Big(\sum_{|n|\le N}|T_{t_n}g_n
	|^r\Big)^{1/r}\bigg\|_{\ell^s}\le A_N(s, r)\bigg\|\Big(\sum_{|n|\le N}|g_n
	|^r\Big)^{1/r}\bigg\|_{\ell^s}.
	\end{align}
	Using \eqref{eq:58} it is easy to see that $A_N(s, r)<\infty$.
	We pick a real number $q$ such that  $p_0<q<p<2$ and define $\theta\in(0, 1)$ by setting
	\[
	\frac{1}{2}=\frac{1-\theta}{q}+\frac{\theta}{\infty}.
	\]
	This in turn implies that $\theta=1-q/2$ and consequently
	determines $u\in(q, p)$ such that
	\[
	\frac{1}{u}=\frac{1-\theta}{q}+\frac{\theta}{p}.
	\]
	Using the complex method of interpolation for the $\ell^s(\ell^r)$ spaces, see e.g.\ \cite[Theorem 5.1.2]{BeLo}, we obtain
	$$A_N(u, 2)\le A_N(q, q)^{1-\theta}A_N(p, \infty)^{\theta}.$$
	Invoking \eqref{eq:58} we have $A_N(q, q)\le\sup_{n\in\ZZ}\|T_{*, U_n}\|_{\ell^q\to\ell^q}$, since
	\begin{align*}
	\bigg\|\Big(\sum_{|n|\le N}|T_{t_n}g_n
	|^q\Big)^{1/q}\bigg\|_{\ell^q}\le\bigg(\sum_{|n|\le N}\big\|\sup_{t\in U_n}|T_tg_n
	|\big\|_{\ell^q}^q\bigg)^{1/q}\le \sup_{n\in\ZZ}\|T_{*, U_n}\|_{\ell^q\to\ell^q}\bigg\|\Big(\sum_{|n|\le N}|g_n
	|^q\Big)^{1/q}\bigg\|_{\ell^q}.  
	\end{align*}
	Invoking \eqref{eq:57} we obtain $A_N(p, \infty)\le \|R_N^{\mathfrak t}\|_{\ell^p\to \ell^p(\ell^2)}+2\|H_{*, U}\|_{\ell^p\to \ell^p}$. Indeed, 
	let $g=\sup_{|n|\le N}|g_n|$ and recall that $M_t=T_t+H_t$, then
	\begin{align*}
	\big\|\sup_{|n|\le N}|T_{t_n}g_n
	|\big\|_{\ell^p}&\le\big\|\sup_{|n|\le N}M_{t_n}g
	\big\|_{\ell^p}+\big\|\sup_{t\in U}H_tg
	\big\|_{\ell^p}\\
	&\le \bigg\|\Big(\sum_{|n|\le N}|T_{t_n}g|^2\Big)^{1/2}\bigg\|_{\ell^p}
	+2\big\|\sup_{t\in U}H_tg
	\big\|_{\ell^p}\\ 
	&\le\big(\|R_N^{\mathfrak t}\|_{\ell^p\to \ell^p(\ell^2)}+2\|H_{*, U}\|_{\ell^p\to \ell^p}\big)\|g\|_{\ell^p}.
	\end{align*}
	Moreover, \eqref{eq:60} implies that
	\begin{align}
	\label{eq:56}
	\bigg\|\Big(\sum_{|n|\le N}|T_{t_n}S_{j+n}f
	|^2\Big)^{1/2}\bigg\|_{\ell^2}\le b_j\|f\|_{\ell^2}.
	\end{align}
	By \eqref{eq:55} and \eqref{eq:98} we get
	\begin{equation}
	\label{eq:65}
	\begin{split}
	\bigg\|\Big(\sum_{|n|\le N}|T_{t_n}S_{j+n}f
	|^2\Big)^{1/2}\bigg\|_{\ell^u}&\le A_N(u, 2)\bigg\|\Big(\sum_{|n|\le N}|S_{j+n}f
	|^2\Big)^{1/2}\bigg\|_{\ell^u}\\
	&\le C_{u}aA_N(u, 2)\|f\|_{\ell^u}.
	\end{split}
	\end{equation}
	We now take $\rho\in(0, 1)$ satisfying
	\[
	\frac{1}{p}=\frac{1-\rho}{u}+\frac{\rho}{2}
	\]
	and interpolate \eqref{eq:56} with \eqref{eq:65}, then
	\begin{align}
	\label{eq:66}
	\bigg\|\Big(\sum_{|n|\le N}|T_{t_n}S_{j+n}f
	|^2\Big)^{1/2}\bigg\|_{\ell^p}\le \big(C_{u}aA_N(u, 2)\big)^{1-\rho}b_j^{\rho}\|f\|_{\ell^p}.
	\end{align}
	Summing \eqref{eq:66} over $j\in\ZZ$  it is easy to see that
	\[
	\|R_N^{\mathfrak t}\|_{\ell^p\to \ell^p(\ell^2)}\le\Big(C_{u}a\sup_{n\in\ZZ}\|T_{*, U_n}\|_{\ell^q\to\ell^q}^{1-\theta}\big(\|R_N^{\mathfrak t}\|_{\ell^p\to \ell^p(\ell^2)}+2\|H_{*, U}\|_{\ell^p\to \ell^p}\big)^{\theta}\Big)^{1-\rho}B_
	{\rho}.
	\]
	Thus there exists $0<\mathbf C_p<\infty$,  such that \eqref{eq:54} holds.
	This completes the proof of Proposition \ref{prop:2}.
\end{proof}

We now  prove Theorem \ref{thm:0'}, which immediately implies Theorem \ref{thm:0} by taking $a_n=2^n$ for all $n\in\NN_0$. 

\subsection{Proof of Theorem \ref{thm:0'}}
To prove Theorem \ref{thm:0'} we shall exploit  Proposition \ref{prop:2} and Proposition \ref{pro:FourEst} and  properties of the Poisson semigroup $P_t$ from Lemma \ref{lem:b1} and the Littlewood--Paley inequality \eqref{eq:98}. 
\begin{proof}[Proof of Theorem \ref{thm:0'}]
Observe first that since $M_{a_n}f=f$ if $a_n<1$ we can assume without loss of generality that our lacunary sequence is such that $ a_{0}\ge 1.$ 	
	
  Inequality \eqref{eq:95} ensures that
  \[
 \big\|\sup_{n\in\NN_0}|\mathcal M_{a_n}f|\big\|_{\ell^p}\le\|P_{*}f\|_{\ell^p}+\big\|\sup_{n\in\NN_0}|(\mathcal M_{a_n}-P_{a_n})f|\big\|_{\ell^p}\le C\|f\|_{\ell^p}+\big\|\sup_{n\in\NN_0}|(\mathcal M_{a_n}-P_{a_n})f|\big\|_{\ell^p}.
    \]
    We only have to handle the second maximal function.  For this
    purpose we will appeal to Proposition \ref{prop:2} with the parameter
    $p_0=1$, the set $U=\{a_n: n\in \NN_0\}$ (so that $U\subseteq [1,\infty)$), the operators $M_t=\mathcal M_t$, and $H_t=P_t$,
     where $P_t$ is the Poisson semigroup and a sequence $b_j\simeq a^{-|j|/2}$, where $a>1$ is the
     parameter from \eqref{eq:47}.

     In this case \eqref{eq:57} and
    \eqref{eq:58} are obvious, since $U_n=[a_n, a_{n+1})\cap U=\{a_n\}$ if $n\ge 0$ and $U_n=\emptyset$ if $n<0$ (recall our convention that $T_{*,\emptyset}=0$). Thus it only remains to verify condition
    \eqref{eq:60}. For every $t>0$ and $\xi\in\TT^d$ let
    \begin{equation}
    \label{eq:kndef}
    \mathfrak n_t(\xi)=\mathfrak m_t(\xi)-\mathfrak p_t(\xi)=\mathfrak m_t(\xi)-e^{-t|\xi|_{\st}},\qquad 
    \end{equation}
    be the multiplier associated with the operator
    $T_t=M_t-H_t=\mathcal M_t-P_t$.  Observe that by Proposition
    \ref{pro:FourEst}, the inequality \eqref{eq:92} and the properties
    of $\mathfrak p_t(\xi)$ there exists a constant $C>0$ independent of the
    dimension such that for $t\ge 1$ we have
  \begin{align}
    \label{eq:99}
    |\mathfrak n_t(\xi)|\le |\mathfrak m_t(\xi)-1|+|\mathfrak p_t(\xi)-1|\le Ct|\xi|, \qquad\text{and} \qquad |\mathfrak n_t(\xi)|\le
    C(t|\xi|)^{-1}. 
  \end{align}
  Therefore, by \eqref{eq:99}, the Plancherel theorem and \eqref{eq:92} we
obtain
\begin{align}
\label{eq:99+}
\begin{split}
    \bigg\|\Big(\sum_{n\in \ZZ}\sup_{t\in U_n}&|T_tS_{j+n}
  f|^2\Big)^{1/2}\bigg\|_{\ell^2}=\bigg\|\Big(\sum_{n\in \NN_0}|T_{a_n}S_{j+n}
  f|^2\Big)^{1/2}\bigg\|_{\ell^2}\\
  &=\bigg(\int_{\TT^d}\sum_{n\in \NN_0}\big|\mathfrak n_{a_n}(\xi)\big(e^{-a_{n+j} |\xi|_{\st}}
  -e^{-a_{n+j-1}|\xi|_{\st}}\big)\big|^2
  |\hat{f}(\xi)|^2{\rm d}\xi\bigg)^{1/2}\\
 & \lesssim\bigg(\int_{\TT^d}\sum_{n\in \NN_0}E_{n,j}(\xi)^2
  |\hat{f}(\xi)|^2{\rm d}\xi\bigg)^{1/2},
\end{split}
\end{align}
where
\[
E_{n,j}(\xi):=\min\big\{a_n|\xi|, (a_n|\xi|)^{-1}\big\}\big|\big(e^{-a_{n+j} |\xi|_{\st}}
-e^{-a_{n+j-1}|\xi|_{\st}}\big)\big|.
\]

We claim that
\begin{equation}
\label{eq:Enjxi}
E_{n,j}(\xi)\le a^{-|j|/2}\min\big\{(a_n|\xi|)^{1/2}, (a_n|\xi|)^{-1/2}\big\}.
\end{equation}
Indeed, if $j\ge0$, then
\begin{align*}
  E_{n,j}(\xi)&\lesssim\min\big\{a_n|\xi|, (a_n|\xi|)^{-1}\big\}
               \cdot  e^{-a_{n+j-1}|\xi|_{\st}}\\
              &\lesssim \min\big\{(a_n|\xi|)^{1/2}, (a_n|\xi|)^{-1/2}\big\}(a_n|\xi|)^{1/2}e^{-{a^{j-1}a_{n}|\xi|_{\st}}}\\
  &\lesssim_a a^{-j/2}\min\big\{(a_n|\xi|)^{1/2}, (a_n|\xi|)^{-1/2}\big\}.
\end{align*}
If $j<0$, then
\begin{align*}
  E_{n,j}(\xi)&\lesssim\min\big\{a_n|\xi|, (a_n|\xi|)^{-1}\big\}\min\big\{a_{n+j}|\xi|,e^{-a_{n+j-1}|\xi|_{\st}}\big\}\\
              &\lesssim \min\big\{(a_n|\xi|)^{1/2}, (a_n|\xi|)^{-1/2}\big\}(a_n|\xi|)^{-1/2}(a_{n+j}|\xi|)^{1/2}\\
  &\lesssim_a a^{j/2}\min\big\{(a_n|\xi|)^{1/2}, (a_n|\xi|)^{-1/2}\big\}.
\end{align*}

We use \eqref{eq:Enjxi} to  estimate \eqref{eq:99+} and obtain
\begin{equation}
\label{eq:99++}
\begin{split}
\bigg\|\Big(\sum_{n\in \ZZ}\sup_{t\in U_n}|T_tS_{j+n}
f|^2\Big)^{1/2}\bigg\|_{\ell^2}
&\lesssim_aa^{-|j|/2}\bigg(\int_{\TT^d}\sum_{n\in
	\ZZ}\min\big\{a_n |\xi|, (a_n|\xi|)^{-1}\big\}
|\hat{f}(\xi)|^2{\rm d}\xi\bigg)^{1/2}\\
&
\lesssim_a a^{-|j|/2}\|f\|_{\ell^2}.
\end{split}
\end{equation}
Note that in \eqref{eq:Enjxi} and \eqref{eq:99++} only the lower bound from \eqref{eq:47} is required. The inequality \eqref{eq:99++} implies condition \eqref{eq:60} in Proposition \ref{prop:2}. Hence, the proof of Theorem \ref{thm:0'} is  completed.
\end{proof}

\subsection{Proof of Theorem \ref{thm:1}}
We shall demonstrate how to use Proposition \ref{prop:2} and Proposition \ref{pro:FourEst} to
deduce Theorem \ref{thm:1}. The new ingredient will be inequality
\eqref{eq:73}, which is invaluable here. The proof of Lemma \ref{lem:6} immediately  follows from  \cite[Lemma 2.1]{BMSW1} for $r=\infty$, hence we omit it here.
\begin{lemma}
  \label{lem:6}
For every  $n\in\NN_{0}$ and   every function $\mathfrak a:[2^n,
        2^{n+1}]\to\CC$ satisfying $\mathfrak a(t)=\mathfrak a(\lfloor t\rfloor)$  we have
        \begin{align}
          \label{eq:73}
            	\sup_{2^n\le t< 2^{n+1}}|\mathfrak a(t)-\mathfrak a(2^n)|
		\le
		2^{1-1/r}
		\sum_{0\le l\le n}
		\Big(
		\sum_{k = 0}^{2^{l}-1}
		\big|\mathfrak a(2^n+{2^{n-l}(k+1)}) -\mathfrak a(2^n+{2^{n-l}k})\big|^r
		\Big)^{1/r}.
        \end{align}
\end{lemma}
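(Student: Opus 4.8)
The plan is to prove \eqref{eq:73} directly by a dyadic telescoping argument of Rademacher--Menshov type; in fact the supremum estimate is the elementary half of \cite[Lemma 2.1]{BMSW1}, and the constant $2^{1-1/r}$ is only forced by the stronger $r$-variation bound in \eqref{eq:42}. First I would note that, since $\mathfrak a(t)=\mathfrak a(\lfloor t\rfloor)$, every $t\in[2^n,2^{n+1})$ has $\lfloor t\rfloor=2^n+m$ with $m$ an integer, $0\le m<2^n$, so the left-hand side of \eqref{eq:73} equals $\max_{0\le m<2^n}\big|\mathfrak a(2^n+m)-\mathfrak a(2^n)\big|$, and it suffices to bound this quantity for each fixed $m$.

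Fix such an $m$. For every level $l\in\{0,1,\dots,n\}$ set
\[
x_l:=2^n+2^{n-l}\big\lfloor m/2^{n-l}\big\rfloor,
\]
the left endpoint of the unique interval $[2^n+2^{n-l}k,\,2^n+2^{n-l}(k+1)]$, $0\le k<2^l$, containing $2^n+m$. Then $x_0=2^n$ and $x_n=2^n+m$, so one may telescope:
\[
\mathfrak a(2^n+m)-\mathfrak a(2^n)=\sum_{l=1}^{n}\big(\mathfrak a(x_l)-\mathfrak a(x_{l-1})\big).
\]
Since the level-$l$ dyadic partition refines the level-$(l-1)$ one, $x_{l-1}$ is the left endpoint of the parent interval of $x_l$; hence $x_{l-1}$ equals $x_l$ or $x_l-2^{n-l}$, and in either case $x_{l-1}=2^n+2^{n-l}(2j)$ for some integer $j$. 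Therefore each increment $\mathfrak a(x_l)-\mathfrak a(x_{l-1})$ is either $0$ or a single nearest-neighbour difference $\mathfrak a(2^n+2^{n-l}(k+1))-\mathfrak a(2^n+2^{n-l}k)$ with $k=2j$, whence
\[
\big|\mathfrak a(x_l)-\mathfrak a(x_{l-1})\big|\le\Big(\sum_{k=0}^{2^l-1}\big|\mathfrak a(2^n+2^{n-l}(k+1))-\mathfrak a(2^n+2^{n-l}k)\big|^r\Big)^{1/r},
\]
a single term being dominated by the whole $\ell^r$ sum. Summing over $l=1,\dots,n$ and taking the maximum over $m$ gives \eqref{eq:73}, in fact with $1$ in place of $2^{1-1/r}$ and with the $l=0$ term on the right-hand side unused; since $2^{1-1/r}\ge1$, the stated inequality follows a fortiori.

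There is no serious obstacle here: the whole argument is bookkeeping with binary expansions. The only point needing a little care is the claim that at each level the telescoped increment reduces to a single difference of $\mathfrak a$ at two consecutive dyadic endpoints (with even index $k$), which is precisely the nesting property of the dyadic partitions. I would also remark, to explain the constant, that for the full $r$-variation estimate in \eqref{eq:42} one cannot merely telescope: there one regroups the increments over all nodes of the balanced binary tree on $[2^n,2^{n+1})$ and applies the triangle inequality in $\ell^r$ level by level, and it is this regrouping that produces the factor $2^{1-1/r}$. For the pure supremum, the plain telescoping above is enough.
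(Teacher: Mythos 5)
Your proof is correct, and it is genuinely more elementary than the route taken in the paper. The paper obtains Lemma~\ref{lem:6} by the chain of inequalities displayed in \eqref{eq:42}: one first bounds $\sup_{2^n\le t<2^{n+1}}|\mathfrak a(t)-\mathfrak a(2^n)|$ by the $r$-variation $V_r\big(\mathfrak a(t): t\in[2^n,2^{n+1})\big)$, and then controls that $r$-variation by Lemma~\ref{lem:5}, which is \cite[Lemma 2.1]{BMSW1}. That variational bound requires the full Rademacher--Menshov regrouping of increments over the binary tree on $[2^n,2^{n+1})$, with the triangle inequality in $\ell^r$ applied level by level, and that is what forces the constant $2^{1-1/r}$. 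Your direct telescoping bypasses the variational machinery altogether: since $\mathfrak a(t)=\mathfrak a(\lfloor t\rfloor)$ reduces the supremum to $\max_{0\le m<2^n}|\mathfrak a(2^n+m)-\mathfrak a(2^n)|$, one can follow the unique dyadic descent $x_0=2^n,\dots,x_n=2^n+m$ and majorize each nonzero increment by the corresponding level-$l$ $\ell^r$ sum, giving the estimate with the sharper constant $1$ in place of $2^{1-1/r}$. The nesting fact you invoke, namely that $x_{l-1}\in\{x_l,\, x_l-2^{n-l}\}$ and $x_{l-1}=2^n+2^{n-l}\cdot 2j$ for some integer $j$, is exactly the identity $\lfloor m/2^{n-l+1}\rfloor=\big\lfloor \lfloor m/2^{n-l}\rfloor/2\big\rfloor$, and your index $k=2j$ indeed lies in $\{0,\dots,2^l-1\}$. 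What you gain is a short self-contained argument and a better constant; what the paper's route buys is economy of exposition, since Lemma~\ref{lem:5} is needed anyway for the short-variation estimate \eqref{eq:36}, so once it is cited, Lemma~\ref{lem:6} comes for free via $\sup\le V_r$.
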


\begin{proof}[Proof of Theorem \ref{thm:1}]
Observe that $\mathcal M_t=I$ for $0<t<1.$ Therefore it suffices to
bound $\sup_{t\ge 1}|\mathcal M_t f|$. In a similar way as in Theorem
\ref{thm:0'} we will use Proposition \ref{prop:2} with the parameter
$p_0=3/2$, the sequence $a_n=2^n$, the set $U=[1,\infty)$, the
operators $M_t=\mathcal M_t$ and $H_t=\mathcal M_{2^n}$ for every
$t\in U_n=[2^n, 2^{n+1})\cap[1,\infty)$, and a sequence
$b_j\simeq2^{-\varepsilon|j|/4}$ for some $\varepsilon\in(0, 1)$.

   Theorem \ref{thm:0} ensures that condition
  \eqref{eq:57} holds for the operators $H_t$. It remains to prove \eqref{eq:58} for all
  $p\in(3/2, 2]$ and verify condition \eqref{eq:60} for the operators $T_t=M_t-H_t$.  We first prove
  \eqref{eq:58}, for this purpose
  we use Lemma \ref{lem:6} and  obtain, for $n\ge 0,$
    \begin{align}
      \label{eq:94}
      \begin{split}
        \big\|\sup_{t\in U_n}|T_tf|\big\|_{\ell^p}&=\big\|\sup_{2^n\le
          t<2^{n+1}}|(\mathcal M_t-\mathcal M_{2^n})f|\big\|_{\ell^p}\\
        &\lesssim \sum_{0\le l\le n}\bigg\|\Big( \sum_{k = 0}^{2^{l}-1}
        \big|(\mathcal M_{2^n+{2^{n-l}(k+1)}} - \mathcal M_{2^n+{2^{n-l}k}})f\big|^2
        \Big)^{1/2}\bigg\|_{\ell^p}\lesssim \|f\|_{\ell^p}.
      \end{split}
    \end{align}
    The last inequality in \eqref{eq:94} will follow if we show that
    for every $p\in(3/2, 2]$, there is $\delta_p>0$ such that
    \begin{align*}
      \bigg\|\Big( \sum_{k = 0}^{2^{l}-1}
        \big|(\mathcal M_{2^n+{2^{n-l}(k+1)}} - \mathcal M_{2^n+{2^{n-l}k}})f\big|^2
        \Big)^{1/2}\bigg\|_{\ell^p}\lesssim 2^{-\delta_pl}\|f\|_{\ell^p}.
    \end{align*}
    This in turn will follow by interpolation between \eqref{eq:101}
    and \eqref{eq:102}, since for $p=1$ it is easy to see that
\begin{align}
  \label{eq:101}
      \bigg\|\Big( \sum_{k = 0}^{2^{l}-1}
        \big|(\mathcal M_{2^n+{2^{n-l}(k+1)}} - \mathcal M_{2^n+{2^{n-l}k}})f\big|^2
        \Big)^{1/2}\bigg\|_{\ell^1}\lesssim 2^{l}\|f\|_{\ell^1},
\end{align}
and for $p=2$ we are going to  show that 
\begin{align}
 \label{eq:102}
      \bigg\|\Big( \sum_{k = 0}^{2^{l}-1}
        \big|(\mathcal M_{2^n+{2^{n-l}(k+1)}} - \mathcal M_{2^n+{2^{n-l}k}})f\big|^2
        \Big)^{1/2}\bigg\|_{\ell^2}\lesssim 2^{-l/2}\|f\|_{\ell^2}.
\end{align}
 We have reduced the matter to estimate \eqref{eq:102}, which will be
 based on inequality \eqref{eq:103}. For every $\varepsilon\in[0, 1)$
 we have by \eqref{eq:22} that
 \begin{align}
   \label{eq:103}
   \begin{split}     
   \sum_{k = 0}^{2^{l}-1}
   \big|\mathfrak m_{2^n+2^{n-l}(k+1)}(\xi)-\mathfrak m_{2^n+2^{n-l}k}(\xi)\big|^{2-\varepsilon}
   \lesssim \sum_{k = 0}^{2^{l}-1}\bigg(\frac{2^{n-l}}{2^n+2^{n-l}k}\bigg)^{2-\varepsilon}
   \lesssim \frac{1}{2^{(1-\varepsilon)l}}.
   \end{split}
 \end{align}
Plancherel's theorem and inequality \eqref{eq:103} with
$\varepsilon=0$ yield \eqref{eq:102}, which completes the proof of 
\eqref{eq:94}.

We now verify condition \eqref{eq:60}.  As
before we apply Lemma \ref{lem:6} and for every $\varepsilon\in(0, 1)$
we get
\begin{align}
  \label{eq:104}
  \begin{split}
  \bigg\|\Big(\sum_{n\in \ZZ}\sup_{t\in U_n}&|T_tS_{j+n}
  f|^2\Big)^{1/2}\bigg\|_{\ell^2}=\bigg\|\Big(\sum_{n\in \NN_0}\sup_{t\in U_n}|T_tS_{j+n}
  f|^2\Big)^{1/2}\bigg\|_{\ell^2}\\
  &\lesssim \sum_{l\ge0}
  \bigg\|\Big(\sum_{n\ge l} \sum_{k = 0}^{2^{l}-1} \big|(\mathcal M_{2^n+{2^{n-l}(k+1)}}
  - \mathcal M_{2^n+{2^{n-l}k}})S_{j+n}f\big|^2
  \Big)^{1/2}\bigg\|_{\ell^2}\\
  &\lesssim 2^{-\varepsilon|j|/4}\|f\|_{\ell^2}.
  \end{split}
\end{align}
The last inequality follows from the following inequality 
\begin{align}
  \label{eq:105}
\bigg\|\Big(\sum_{n\ge l} \sum_{k = 0}^{2^{l}-1} \big|(\mathcal M_{2^n+{2^{n-l}(k+1)}}
  - \mathcal M_{2^n+{2^{n-l}k}})S_{j+n}f\big|^2
  \Big)^{1/2}\bigg\|_{\ell^2}
  \lesssim 2^{-(1-\varepsilon)l/2}2^{-\varepsilon|j|/4}\|f\|_{\ell^2}.  
\end{align}
The proof of \eqref{eq:105} will be very much in spirit of \eqref{eq:102}.
Indeed, $(2^n+{2^{n-l}k})\simeq 2^n$ for every $0\le
k\le 2^{l}$, hence due to \eqref{eq:92},  \eqref{eq:6}, and \eqref{eq:21} we obtain
\begin{align}
\label{eq:106}
  \begin{split}
  \big|\mathfrak m_{2^n+2^{n-l}(k+1)}(\xi)&-\mathfrak m_{2^n+2^{n-l}k}(\xi)\big|^{\varepsilon}
\big|\mathfrak p_{2^{j+n}}(\xi)-\mathfrak p_{2^{j+n-1}}(\xi)\big|^2\\
&\lesssim
\min\big\{|2^n\xi|,|2^n\xi|^{-1}\big\}^{\varepsilon}\big|e^{-2^{j+n}|\xi|_{\st}}-e^{-2^{j+n-1}|\xi|_{\st}}\big|^2\\
&\lesssim 2^{-\varepsilon|j|/2}\min\big\{|2^n\xi|,|2^n\xi|^{-1}\big\}^{\varepsilon/2},
  \end{split}
\end{align}
where the last  inequality  follows from \eqref{eq:Enjxi}.
Finally,  \eqref{eq:106} combined with \eqref{eq:103} yields
\begin{align}
  \label{eq:107}
  \begin{split}
  \sum_{k = 0}^{2^{l}-1}
\big|\mathfrak m_{2^n+2^{n-l}(k+1)}(\xi)&-\mathfrak m_{2^n+2^{n-l}k}(\xi)\big|^2
\big|\mathfrak p_{2^{j+n}}(\xi)-\mathfrak p_{2^{j+n-1}}(\xi)\big|^2
  \\
  &\lesssim
2^{-\varepsilon|j|/2}2^{-(1-\varepsilon)l}\min\big\{|2^n\xi|,|2^n\xi|^{-1}\big\}^{\varepsilon/2}.
  \end{split}
\end{align}
Therefore, \eqref{eq:107} with the Plancherel theorem establish
\eqref{eq:105}, since
\begin{align*}
\bigg\|\Big(\sum_{n\ge l} \sum_{k = 0}^{2^{l}-1} &\big|(\mathcal M_{2^n+{2^{n-l}(k+1)}}
- \mathcal M_{2^n+{2^{n-l}k}})S_{j+n}f\big|^2
  \Big)^{1/2}\bigg\|_{\ell^2}^2\\
  &\lesssim2^{-\varepsilon|j|/2}2^{-(1-\varepsilon)l}
  \int_{\TT^d}\sum_{n\ge l}\min\big\{|2^n\xi|,|2^n\xi|^{-1}\big\}^{\varepsilon/2}|\hat{f}(\xi)|^2{\rm d}\xi\\
  &\lesssim 2^{-\varepsilon|j|/2}2^{-(1-\varepsilon)l}\|f\|_{\ell^2}^2.
\end{align*}
This justifies \eqref{eq:104} and completes the proof of Theorem \ref{thm:1}.
   \end{proof}

\section{$r$-variational estimates: proofs of Theorem \ref{thm:11} and Theorem \ref{thm:10}}
We begin with some remarks on $r$-variation seminorms.  For $r\in[1, \infty)$ the
$r$-variation seminorm $V_r$ of a complex-valued function
$(0, \infty)\ni t\mapsto\mathfrak a_t$ is defined by
\[
V_r(\mathfrak a_t: t\in Z)=\sup_{\atop{t_0<\ldots <t_J}{t_j\in Z}}
\bigg(\sum_{j=0}^J|\mathfrak a_{t_{j+1}}-\mathfrak a_{t_j}|^r\bigg)^{1/r},
\]
where the supremum is taken over all finite increasing sequences in $Z\subseteq(0, \infty)$.
\begin{itemize}
\item If $1\le r_1\le r_2<\infty$ then
\[
V_{r_2}(\mathfrak a_t: t\in Z)\le V_{r_1}(\mathfrak a_t: t\in Z).
\]
\item If  $Z_1\subseteq Z_2$ then
\[
V_r(\mathfrak a_t: t\in Z_1)\le V_r(\mathfrak a_t: t\in Z_2).
\]
\item If  $Z$ is a disjoint sum of $Z_1$ and $Z_2$ then
\begin{align}
\label{eq:18-}
V_r(\mathfrak a_t: t\in Z)\le V_r(\mathfrak a_t: t\in Z_1)+ V_r(\mathfrak a_t: t\in Z_2)+2\sup_{t\in Z} |\mathfrak a_t|.
\end{align}
\item For every $t_0\in Z$ we have
\begin{align*}
\sup_{t\in Z}|\mathfrak a_t|\le|\mathfrak a_{t_0}|+ 2V_r(\mathfrak a_t: t\in Z).
\end{align*}
\item If $Z$ is a countable subset of $(0, \infty)$ then
\begin{align*}
V_r(\mathfrak a_t: t\in Z)\le 2\Big(\sum_{t \in Z} |\mathfrak a_t|^r\Big)^{1/r}.
\end{align*}
\end{itemize}
Finally, for every $r\in[1, \infty)$ there exists $C_r>0$ such that	
\begin{equation}
\label{eq:29}
V_r(\mathfrak a_t: t\in Z)\leq C_rV_r(\mathfrak a_{t}: t\in Z\cap\mathbb D)+C_r\Big(\sum_{n\in\ZZ}
V_r\big(( \mathfrak a_t-\mathfrak a_{2^n}): t\in[2^n, 2^{n+1})\cap Z\big)^r\Big)^{1/r},
\end{equation}
where $\mathbb D=\{2^n: n\in\ZZ\}$.
The first  quantity on the right side in \eqref{eq:29} is called the
long variation seminorm, whereas the second is called  the short
variation seminorm. 
This is a very useful inequality which, in view of Theorem
\ref{thm:11}, will allow  us to reduce the proof
of Theorem \ref{thm:10}   to the estimates of short
variations associated with $\mathcal M_t$. 
\subsection{Long variations: proof of Theorem \ref{thm:11}}
We have shown in Section \ref{sec:4} that the semigroup $P_t$ is a symmetric diffusion semigroup. Therefore from \cite[Theorem 3.3]{JR1} (see also \cite[inequality (2.30)]{BMSW1})
we conclude that for every $p\in(1, \infty)$ and for
 every $r\in(2, \infty)$ there is a constant $C_{p, r}>0$ independent
 of the dimension such that for every $f\in \ell^p(\ZZ^d)$ we have
\begin{equation}
\label{eq:2}
\big\|V_r\big( P_{t} f: t>0\big)\big\|_{\ell^p}\leq C_{p, r}\|f\|_{\ell^p}.
\end{equation}

For every $f\in \ell^p(\ZZ^d)$  we obtain 
\begin{align}
\label{eq:30}
  \big\lVert
	V_r\big(  \mathcal M_{ 2^n} f: n\in\NN_0\big)
	\big\rVert_{\ell^p}\le
  \big\lVert
	V_r\big( P_{ 2^n} f: n\in\NN_0\big)
	\big\rVert_{\ell^p}
  +\bigg\|\Big(\sum_{n\in\NN_0}\big| (\mathcal M_{2^n}- 
 P_{2^n})f\big|^2\Big)^{1/2}\bigg\|_{\ell^p}.
\end{align}
The first term in \eqref{eq:30} is bounded on $\ell^p(\ZZ^d)$ by
\eqref{eq:2}. Therefore, it remains to obtain $\ell^p(\ZZ^d)$ bounds for the
square function in \eqref{eq:30}. For this purpose we will use
\eqref{eq:97} with $a_n=2^n$ (so that $a=2$). Indeed, observe that
\begin{align}
\label{eq:31}
  \begin{split}
\bigg\|\Big(\sum_{n\in \NN_0}\big| (\mathcal M_{2^n}&- 
  P_{2^n})f\big|^2\Big)^{1/2}\bigg\|_{\ell^p}
 =\bigg\|\Big(\sum_{n\in \NN_0}\big|\sum_{j\in \ZZ} (\mathcal M_{2^n} - 
 P_{2^n})S_{j+n}f\big|^2\Big)^{1/2}\bigg\|_{\ell^p}\\
 &\le \sum_{j\in \ZZ}\bigg\|\Big(\sum_{n\in \NN_0}\big| (\mathcal M_{2^n} - 
 P_{2^n})S_{j+n}f\big|^2\Big)^{1/2}\bigg\|_{\ell^p}\lesssim
 \sum_{j\in\ZZ}2^{-\delta_p|j|}\|f\|_{\ell^p}\lesssim \|f\|_{\ell^p}.    
  \end{split}
\end{align}
In order to justify the last but one inequality in \eqref{eq:31} it suffices to show, for each $j\in\ZZ$, that
\begin{equation}
\label{eq:32} 
\bigg\|\Big(\sum_{n\in \NN_0}\big|  \mathcal M_{2^n} S_{j+n}f \big|^2\Big)^{1/2}\bigg\|_{\ell^p}+\bigg\|\Big(\sum_{n\in \NN_0}\big|
P_{2^n}S_{j+n}f\big|^2\Big)^{1/2}\bigg\|_{\ell^p}\lesssim\|f\|_{\ell^p},
\end{equation}
and
\begin{equation}
\label{eq:33} 
\bigg\|\Big(\sum_{n\in \NN_0}\big|  (\mathcal M_{2^n} - 
P_{2^n})S_{j+n}f\big|^2\Big)^{1/2}\bigg\|_{\ell^2}\lesssim2^{-|j|/2}\|f\|_{\ell^2}
\end{equation}
then interpolation does the job. 
To prove \eqref{eq:32} we first show  the following dimension-free vector-valued bounds
\begin{align}
\label{eq:34}
  \bigg\|\Big(\sum_{n\in \NN_0}\big|\mathcal M_{2^n} g_n\big|^2\Big)^{1/2}\bigg\|_{\ell^p}\lesssim
  \bigg\|\Big(\sum_{n\in \NN_0}|g_n|^2\Big)^{1/2}\bigg\|_{\ell^p}
\end{align}
and
\begin{align}
\label{eq:35}
  \bigg\|\Big(\sum_{n\in \NN_0}\big|P_{2^n} g_n\big|^2\Big)^{1/2}\bigg\|_{\ell^p}\lesssim
  \bigg\|\Big(\sum_{n\in \NN_0}|g_n|^2\Big)^{1/2}\bigg\|_{\ell^p}
\end{align}
for all $p\in(1, \infty)$. Then in view of \eqref{eq:98} we conclude
\begin{align*}
\bigg\|\Big(\sum_{n\in \NN_0}\big|  \mathcal  M_{2^n} S_{j+n}f \big|^2\Big)^{1/2}\bigg\|_{\ell^p}+\bigg\|\Big(\sum_{n\in \NN_0}\big|
P_{2^n}S_{j+n}f\big|^2\Big)^{1/2}\bigg\|_{\ell^p}\lesssim\bigg\|\Big(\sum_{n\in \NN_0}\big|
S_{j+n}f\big|^2\Big)^{1/2}\bigg\|_{\ell^p}\lesssim\|f\|_{\ell^p},
\end{align*}
which proves \eqref{eq:32}.

The proof of \eqref{eq:34} and
\eqref{eq:35} follows respectively from \eqref{eq:87} (with $a_n=2^n$) and
\eqref{eq:95} and a vector-valued interpolation.  We only demonstrate
\eqref{eq:34}, the estimate in \eqref{eq:35} will be obtained
similarly. Indeed, for $p\in(1, \infty)$ and
$s\in[1, \infty]$, let $A(p, s)$ be the best constant in the following
inequality
\begin{align*}
  \bigg\|\Big(\sum_{n\in \NN_0}\big|\mathcal M_{2^n} g_n\big|^s\Big)^{1/s}\bigg\|_{\ell^p}\le A(p, s)
  \bigg\|\Big(\sum_{n\in \NN_0}|g_n|^s\Big)^{1/s}\bigg\|_{\ell^p}.
\end{align*}
Then interpolation, duality ($A(p, s)=A(p', s')$), and \eqref{eq:87}  yield \eqref{eq:34}, since
\[
A(p, 2)\le A(p, 1)^{1/2}A(p, \infty)^{1/2}=A(p', \infty)^{1/2}A(p, \infty)^{1/2}\le C_{p', \infty}^{1/2}C_{p, \infty}^{1/2}.
\]

By Plancherel's theorem to prove \eqref{eq:33} we need to estimate
\begin{align*}
&\bigg\|\Big(\sum_{n\in \NN_0}\big|  (\mathcal  M_{2^n} - 
P_{2^n})S_{j+n}f\big|^2\Big)^{1/2}\bigg\|_{L^2}\\
  &=\bigg(\int_{\TT^d}\sum_{n\in
    \NN_0}\big|\mathfrak n_{2^n}(\xi)\big(e^{-2^{n+j} |\xi|_{\st}}
  -e^{-2^{n+j-1} |\xi|_{\st}}\big)\big|^2
  |\hat{f}(\xi)|^2{\rm d}\xi\bigg)^{1/2},
\end{align*}
with $\mathfrak n_{2^n}$ defined in \eqref{eq:kndef}. This has been already done in \eqref{eq:99+} (for $a_n=2^n$ and $a=2$) and thus \eqref{eq:33} holds. This completes the proof of \eqref{eq:31} and hence also the proof of Theorem \ref{thm:11}.

\subsection{Short variations: proof of Theorem \ref{thm:10}}
In view of inequalities \eqref{eq:18-} (with $Z_1=(0,1)$ and $Z_2=[1,\infty)$) and \eqref{eq:29} (with $Z=[1,\infty)$), and Theorem \ref{thm:1} and Theorem \ref{thm:11}, the proof
of Theorem \ref{thm:10} will be completed if we show that for every
$p\in(3/2, 4)$ there is a constant $C_p>0$ such that for every
$f\in\ell^p(\ZZ^d)$ we have
\begin{align}
\label{eq:36}
     \bigg\|\Big(\sum_{n\in\NN_0}V_2\big(\mathcal M_{t}f: t\in[2^n,
   2^{n+1})\big)^2\Big)^{1/2}\bigg\|_{\ell^p}\le C_{p}\|f\|_{\ell^p}.
\end{align}
As in \cite{BMSW1} the essential tool will be inequality \eqref{eq:37}
from Lemma \ref{lem:5}.
\begin{lemma}
\label{lem:5}
For every  $n\in\NN_{0}$, for every $r\ge1$ and for   every function $\mathfrak a:[2^n,
        2^{n+1}]\to\CC$ satisfying $\mathfrak a(t)=\mathfrak a(\lfloor t\rfloor)$  we have
        \begin{align}
\label{eq:37}
          \begin{split}
        	V_r\big(\mathfrak a_t: t\in[2^n, 2^{n+1})\big)
		\le 2^{1-1/r}
		\sum_{0\le l\le n}
		\Big(
		\sum_{k = 0}^{2^{l}-1}
		\big|\mathfrak a_{2^n+{2^{n-l}(k+1)}} - \mathfrak a_{2^n+{2^{n-l}k}}\big|^r
		\Big)^{1/r}.
          \end{split}          
        \end{align}
\end{lemma}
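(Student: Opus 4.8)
The plan is to prove \eqref{eq:37} by the dyadic chaining that underlies the classical Rademacher--Menshov inequality; this is exactly the general-$r$ form of Lemma \ref{lem:6} (the case $r=\infty$) and follows the scheme of \cite[Lemma 2.1]{BMSW1}, \cite[Lemma 13]{LL}, \cite[Lemma 1]{MT1}. First I would reduce to integer sequences: since $\mathfrak a(t)=\mathfrak a(\lfloor t\rfloor)$, for any finite increasing sequence $t_0<\cdots<t_{J+1}$ in $[2^n,2^{n+1})$ one has $\mathfrak a_{t_{j+1}}-\mathfrak a_{t_j}=\mathfrak a_{\lfloor t_{j+1}\rfloor}-\mathfrak a_{\lfloor t_j\rfloor}$, and collapsing repetitions among the floors leaves a strictly increasing sequence of integers in $\{2^n,2^n+1,\ldots,2^{n+1}-1\}$ with the same $r$-variation sum. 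Since the chaining argument applies verbatim to integer sequences in $\{2^n,\ldots,2^{n+1}\}$ (and the right side of \eqref{eq:37}, which already involves $\mathfrak a_{2^{n+1}}$, does not depend on the chosen sequence), it suffices to bound $\big(\sum_{j}|\mathfrak a_{t_{j+1}}-\mathfrak a_{t_j}|^{r}\big)^{1/r}$ uniformly over strictly increasing integer sequences $2^n\le t_0<\cdots<t_{J+1}\le 2^{n+1}$.

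For $0\le l\le n$ partition $[2^n,2^{n+1}]$ into the $2^l$ dyadic blocks $I^{(l)}_k=[2^n+2^{n-l}k,\,2^n+2^{n-l}(k+1)]$, $0\le k\le 2^l-1$, and call $\mathfrak a_{2^n+2^{n-l}(k+1)}-\mathfrak a_{2^n+2^{n-l}k}$ the \emph{increment of} $I^{(l)}_k$, so that $S_l:=\big(\sum_{k}|\text{increment of }I^{(l)}_k|^{r}\big)^{1/r}$ is the $l$-th summand on the right of \eqref{eq:37}. The core is the combinatorial claim that each $\mathfrak a_{t_{j+1}}-\mathfrak a_{t_j}$ is a telescoping sum of dyadic increments using at most two increments from each level $l\in\{0,\ldots,n\}$ (roughly: one obtained by following, through the dyadic tree, the block containing $t_j$ and collecting the "right halves", the other by following the block containing $t_{j+1}$ and collecting the "left halves"), and that, across all $j$, every single dyadic increment is used at most once. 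Granting this, write the level-$l$ contribution to jump $j$ as $\delta^-_{l,j}+\delta^+_{l,j}$ (each summand an increment or $0$); then, by Minkowski's inequality in $\ell^{r}$ over the index $j$, followed by $(x+y)^{r}\le 2^{r-1}(x^{r}+y^{r})$ and the "used at most once" property,
\begin{align*}
\Big(\sum_{j}|\mathfrak a_{t_{j+1}}-\mathfrak a_{t_j}|^{r}\Big)^{1/r}
&\le \Big(\sum_{j}\Big(\sum_{l=0}^{n}\big(|\delta^-_{l,j}|+|\delta^+_{l,j}|\big)\Big)^{r}\Big)^{1/r}
\le \sum_{l=0}^{n}\Big(\sum_{j}\big(|\delta^-_{l,j}|+|\delta^+_{l,j}|\big)^{r}\Big)^{1/r}\\
&\le \sum_{l=0}^{n}\Big(2^{r-1}\sum_{j}\big(|\delta^-_{l,j}|^{r}+|\delta^+_{l,j}|^{r}\big)\Big)^{1/r}
\le 2^{1-1/r}\sum_{l=0}^{n}S_l,
\end{align*}
and taking the supremum over all admissible sequences gives \eqref{eq:37}.

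I expect the main obstacle to be a clean verification of the combinatorial claim, in particular the bookkeeping near the finest level $l=n$: one must fix a consistent convention (e.g. each integer point is assigned to the dyadic block in which it is the left endpoint, except the right endpoint $2^{n+1}$) to guarantee both that at most two increments per level are produced per jump and that no increment is charged by two distinct jumps — here one checks that a jump between two points lying in a common block $I^{(l)}_k$ produces no increment at level $l$ or coarser, which forces the charges at level $l$ to be pairwise disjoint. This is precisely what is done for $r=\infty$ in Lemma \ref{lem:6}; the passage to general $r\ge1$ is routine bookkeeping, the gain $2^{1-1/r}$ arising from the single use of $(x+y)^{r}\le 2^{r-1}(x^{r}+y^{r})$.
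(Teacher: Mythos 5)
Your chaining argument is correct and is precisely the proof the paper defers to in citing \cite[Lemma 2.1]{BMSW1}; the paper itself does not reproduce it. The combinatorial claim you flag does hold as stated: telescoping each jump $\mathfrak a_{t_{j+1}}-\mathfrak a_{t_j}$ through the boundary point $m_j$ of the two sibling blocks at the coarsest level where $t_j$ and $t_{j+1}$ separate yields, for each half $[t_j,m_j]$ and $[m_j,t_{j+1}]$, a staircase decomposition into dyadic blocks with at most one block per level, these blocks tile $[t_j,t_{j+1}]$, and since the intervals $[t_j,t_{j+1}]$ overlap only at endpoints while every block has length $\ge 1$, no dyadic increment is charged by two distinct jumps, so Minkowski in $\ell^r$ over $j$ followed by $(x+y)^r\le 2^{r-1}(x^r+y^r)$ produces exactly the constant $2^{1-1/r}$.
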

We refer to \cite[Lemma 2.1]{BMSW1} for the proof. The
advantage of this inequality is that the variational seminorm on a
dyadic block is controlled by a sum of suitable square functions, which are better adjusted to investigations on $\ell^p(\ZZ^d)$ spaces.
In order to prove \eqref{eq:36} it suffices to show that there are $\delta_p, \varepsilon_p\in(0, 1)$ such that for every $l\in\NN$, for every $j\in\ZZ$ and for every
$f\in\ell^p(\ZZ^d)$ we have 
\begin{align}
  \label{eq:38}
\bigg\|\Big(\sum_{n\ge l} \sum_{k = 0}^{2^{l}-1} \big|(\mathcal M_{2^n+{2^{n-l}(k+1)}}
  - \mathcal M_{2^n+{2^{n-l}k}})S_{j+n}f\big|^2
  \Big)^{1/2}\bigg\|_{\ell^p}
  \lesssim 2^{-\delta_pl}2^{-\varepsilon_p|j|}\|f\|_{\ell^p}.    
\end{align}
Once \eqref{eq:38} is established we appeal to \eqref{eq:97}, \eqref{eq:37} and \eqref{eq:38} and  obtain
\begin{align*}
       \bigg\|\Big(\sum_{n\in\NN_0}V_2\big(\mathcal M_{t}f: t\in[2^n,
  2^{n+1})\big)^2\Big)^{1/2}\bigg\|_{\ell^p}\lesssim
\sum_{j\in\ZZ}\sum_{l\ge0}2^{-\delta_pl}2^{-\varepsilon_p|j|}\|f\|_{\ell^p}  \lesssim\|f\|_{\ell^p}.
\end{align*}
The  proof of inequality \eqref{eq:38} will be given in the next two paragraphs.
\subsubsection{Proof of inequality \eqref{eq:38} for $p\in(3/2, 2]$} 
The  proof is based on ideas from the proof of Proposition \ref{prop:2}. Throughout the proof we fix $l\in \NN.$ 
Take $N\in\NN$ and for $s\in(3/2, 2]$ and $r\in[1, \infty]$ let  $A_N(s, r)$ be the smallest constant in the following inequality
    \begin{align}
\label{eq:39}
      \bigg\|\Big(\sum_{l\le n\le N} \sum_{k = 0}^{2^{l}-1} \big|(\mathcal M_{2^n+{2^{n-l}(k+1)}}
  - \mathcal M_{2^n+{2^{n-l}k}})g_n\big|^r
  \Big)^{1/r}\bigg\|_{\ell^s}\le A_N(s, r)\bigg\|\Big(\sum_{l\le n\le N}|g_n
    |^r\Big)^{1/r}\bigg\|_{\ell^s}.
    \end{align}
   By Minkowski's inequality, since $\|\mathcal M_{t}f\|_{\ell^s}\le\|f\|_{\ell^s}$ for all $s\in[1, \infty]$,  we have
    \begin{align*}
   \bigg\|\Big(\sum_{l\le n\le N} \sum_{k = 0}^{2^{l}-1} \big|(\mathcal M_{2^n+{2^{n-l}(k+1)}}
   - \mathcal M_{2^n+{2^{n-l}k}})g_n\big|^r
   \Big)^{1/r}\bigg\|_{\ell^s}\le  N 2^{l+1} \bigg\|\Big(\sum_{l\le n\le N}|g_n
   |^r\Big)^{1/r}\bigg\|_{\ell^s},
   \end{align*}
   hence, it follows that $A_N(s, r)\le N 2^{l+1}<\infty$. Let $u\in(1, p)$ be such that
      \[
\frac{1}{u}=\frac{1}{2}+\frac{1}{2p}.
\]
Now it is not difficult to see that $A_N(1, 1)\le 2^{l+1}$. Indeed, using $\|\mathcal M_{t}f\|_{\ell^1}\le\|f\|_{\ell^1}$ we obtain
\begin{align*}
\bigg\|\sum_{l\le n\le N} \sum_{k = 0}^{2^{l}-1} \big|(\mathcal M_{2^n+{2^{n-l}(k+1)}}
- \mathcal M_{2^n+{2^{n-l}k}})g_n\big|
\bigg\|_{\ell^1} \le 2^{l+1} \bigg\|\sum_{l\le n\le N} |g_n| \bigg\|_{\ell^1}.
\end{align*}
Moreover, by Theorem \ref{thm:1}, if 
$g=\sup_{l\le n\le N}|g_n|$ then
\begin{align*}
  \big\|\sup_{l\le n\le N}\sup_{0\le k<2^l}\big|(\mathcal M_{2^n+{2^{n-l}(k+1)}}
  - \mathcal M_{2^n+{2^{n-l}k}})g_n
    \big|\big\|_{\ell^p}\le 2\big\|\sup_{t>0}\mathcal M_{t}g\big\|_{\ell^p}\le 2C_p\|g\|_{\ell^p}.
\end{align*}
Hence, using complex  interpolation method, see \cite[Theorem 5.1.2]{BeLo}, we obtain
$$A_N(u, 2)\le A_N(1, 1)^{1/2}A_N(p, \infty)^{1/2}\lesssim 2^{l/2}.$$
Then by \eqref{eq:39} and \eqref{eq:98} (with $a_n=2^n$) we get
\begin{align}
  \label{eq:20}
  \begin{split}
      \bigg\|\Big(\sum_{l\le n\le N} &\sum_{k = 0}^{2^{l}-1} \big|(\mathcal M_{2^n+{2^{n-l}(k+1)}}
  - \mathcal M_{2^n+{2^{n-l}k}})S_{j+n}f\big|^2
  \Big)^{1/2}\bigg\|_{\ell^u}\\
  &\le A_N(u, 2)\bigg\|\Big(\sum_{n\in\ZZ}|S_{j+n}f
    |^2\Big)^{1/2}\bigg\|_{\ell^u}\lesssim 2^{l/2}\|f\|_{\ell^u}.
  \end{split}
\end{align}
We now take  $\rho\in(0, 1]$ satisfying
    \[
\frac{1}{p}=\frac{1-\rho}{u}+\frac{\rho}{2},
\]
then $\rho=p-1$ and  $1-\rho=2-p$. Interpolation between \eqref{eq:20} and \eqref{eq:105} (with $0<\varepsilon< 2-1/(p-1)$) yields
\begin{align*}
  \begin{split}
      \bigg\|\Big(\sum_{l\le n\le N} \sum_{k = 0}^{2^{l}-1}& \big|(\mathcal M_{2^n+{2^{n-l}(k+1)}}
  - \mathcal M_{2^n+{2^{n-l}k}})S_{j+n}f\big|^2
  \Big)^{1/2}\bigg\|_{\ell^p}\\
&\lesssim 2^{l(1-\rho)/2}2^{-(1-\varepsilon)\rho l/2}2^{-\varepsilon|j|\rho/4}\|f\|_{\ell^p}\\
&\lesssim 2^{l(2-p)/2}2^{-(1-\varepsilon)(p-1)l/2}2^{-\varepsilon|j|(p-1)/4}\|f\|_{\ell^p}\\
&  \lesssim 2^{-\delta_p l}2^{-\varepsilon_p|j|}\|f\|_{\ell^p},
  \end{split}
\end{align*}
where $\delta_p=\frac{(1-\varepsilon)(p-1)}{2}-\frac{2-p}{2}>0$, if $p\in(3/2, 2]$ and $\varepsilon_p=\frac{\varepsilon(p-1)}{4}$.  This completes the proof.
\subsubsection{Proof of inequality \eqref{eq:38} for $p\in(2, 4)$}

To this end, we show that, for $p\in [2,\infty)$, we have
\begin{align}
\label{eq:45}
\bigg\|\Big(\sum_{n\ge l} \sum_{k = 0}^{2^{l}-1} \big|(\mathcal M_{2^n+{2^{n-l}(k+1)}}
  - \mathcal M_{2^n+{2^{n-l}k}})S_{j+n}f\big|^2
  \Big)^{1/2}\bigg\|_{\ell^p}
  \lesssim 2^{l/2}\|f\|_{\ell^p}.  
            \end{align}
 Then interpolation of \eqref{eq:45} with \eqref{eq:105}   does the job and we
 obtain \eqref{eq:93} for all $p\in[2, 4).$           
            
           Thus we focus on proving \eqref{eq:45}. Since $p\ge 2$ we estimate
            \begin{align*}
\bigg\|\Big(\sum_{n\ge l} \sum_{k = 0}^{2^{l}-1} &\big|(\mathcal M_{2^n+{2^{n-l}(k+1)}}
  - \mathcal M_{2^n+{2^{n-l}k}})S_{j+n}f\big|^2
  \Big)^{1/2}\bigg\|_{\ell^p}^2\\
&  \le 2^l\max_{0\le k<2^l}\bigg\|\sum_{n\ge l}  \big|(\mathcal M_{2^n+{2^{n-l}(k+1)}}
  - \mathcal M_{2^n+{2^{n-l}k}})S_{j+n}f\big|^2
  \bigg\|_{\ell^{p/2}}\\
 & \lesssim 2^l\max_{0\le k\le 2^l}\bigg\|\Big(\sum_{n\ge l}  \big|\mathcal M_{2^n+{2^{n-l}k}}S_{j+n}f\big|^2
   \Big)^{1/2}\bigg\|_{\ell^{p}}^2\\
   &\lesssim 2^l\|f\|_{\ell^{p}}^2,
            \end{align*}
            where the last inequality follows from \eqref{eq:98} (with $a_n=2^n$) and 
            \begin{align}
             \label{eq:46}
              \sup_{l\ge0}\max_{0\le k\le 2^l}\bigg\|\Big(\sum_{n\ge l}  \big|\mathcal M_{2^n+{2^{n-l}k}}g_n\big|^2
  \Big)^{1/2}\bigg\|_{\ell^{p}}\lesssim\bigg\|\Big(\sum_{n\in\ZZ}  |g_n|^2
  \Big)^{1/2}\bigg\|_{\ell^{p}},
            \end{align}
            which holds for all $p\in(1, \infty)$ and the implicit
            constant independent of the dimension. To prove
            \eqref{eq:46} we follow the argument used to justify
            \eqref{eq:34}.  This is feasible, since for every
            $p\in(1, \infty]$ and for
            every $f\in \ell^p(\ZZ^d)$ we have the following lacunary  estimate
            \[
\sup_{l\ge0}\max_{0\le k\le2^l}\big\|\sup_{n\ge l}|\mathcal M_{2^n+{2^{n-l}k}}f|\big\|_{\ell^p}\le C_{p,\infty}\|f\|_{\ell^p}.
\]
The above is a consequence of Theorem \ref{thm:0'} with the lacunary sequence $a_{n}=(1+2^{-l}k)2^n$ and $a=2$.  

\subsection{Transference principle to the ergodic setting: proof of Theorem \ref{thm:12}}
\label{sec:tp} Recall that
$(X, \mathcal B(X), \mu)$ is a $\sigma$-finite measure space with a
family of commuting and invertible measure-preserving transformations
$T_1,\ldots, T_d$, which map $X$ to itself.
In Proposition \ref{prop:3} we prove the transference principle, which will allow
us to deduce estimates for $r$-variations on $L^p(X)$ for the
operator $\mathcal A_t^G$ defined in \eqref{eq:118} from the corresponding bounds for $\mathcal M_t^G$ on
$\ell^p(\ZZ^d)$. Theorem \ref{thm:12} will follow directly from Proposition \ref{prop:3} combined with Theorem \ref{thm:10} and Theorem \ref{thm:11}. 
\begin{proposition}
\label{prop:3}
Given $p\in(1, \infty)$ and $r\in(2, \infty]$ suppose that there is
a constant $C_{p, r}>0$ such that for a
symmetric convex body $G\subset\RR^d$ the following estimate
\begin{align}
  \label{eq:93}
  \big\|V_r\big(  \mathcal M_t^G f: t\in Z)\big)\big\|_{\ell^p(\ZZ^d)}\le C_{p, r}\|f\|_{\ell^p(\ZZ^d)}
\end{align}
holds for all $f\in \ell^p(\ZZ^d)$ with the implied constant
independent of $d\in \NN$, where $Z\subseteq (0, \infty)$.   Let $\mathcal A_t^G$ be
the ergodic counterpart of $\mathcal M_t^G$.
Then for every $h\in L
^p(X)$ the inequality
\begin{align}
  \label{eq:110}
  \big\|V_r\big(  \mathcal A_t^G h: t\in Z)\big)\big\|_{L^p(X)}\le C_{p,r}\,\|h\|_{L^p(X)}
\end{align}
holds  with the parameters $p$, $r,$ and the constant $C_{p,r}$ as in \eqref{eq:93}.

\end{proposition}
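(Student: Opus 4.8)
\emph{Approach.} The plan is to run the classical Calderón transference principle. Write $T^{n}:=T_1^{n_1}\circ\cdots\circ T_d^{n_d}$ for $n=(n_1,\dots,n_d)\in\ZZ^{d}$; since the $T_i$ are commuting, invertible and measure preserving, $(T^{n}:n\in\ZZ^{d})$ is a measure preserving $\ZZ^{d}$-action on $X$, and it is the symmetry $G=-G$ (hence $G_t\cap\ZZ^{d}=-(G_t\cap\ZZ^{d})$) that will let us realise $\mathcal A_t^{G}$ as a ``restriction'' of $\mathcal M_t^{G}$ to the orbits of this action. As noted in the introduction, $\mathcal A_t^{G}$ depends on $t$ only through $G_t\cap\ZZ^{d}$, so we may and do assume $Z$ is countable. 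Since $G$ is bounded, for every $t>0$ there is $R(t)\in\NN$ with $G_t\cap\ZZ^{d}\subseteq Q_{R(t)}$, so the sets $Z_{R}:=\{t\in Z:G_t\cap\ZZ^{d}\subseteq Q_{R}\}$ increase to $Z$ as $R\to\infty$; I will prove \eqref{eq:110} first with $Z$ replaced by $Z_{R}$, and only at the very end let $R\to\infty$.

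\emph{The pointwise identity.} First I would fix $R\in\NN$, a large integer $L>R$, and a point $x_{0}\in X$, and set $f(n):=h(T^{n}x_{0})\,\ind{Q_{L}}(n)$, which lies in $\ell^{p}(\ZZ^{d})$ for $\mu$-a.e.\ $x_{0}$. For $m\in\ZZ^{d}$ with $\norm{m}_{\infty}\le L-R$ and $t\in Z_{R}$, every point $m-y$ with $y\in G_t\cap\ZZ^{d}$ lies in $Q_{L}$, so the truncation is invisible and, applying the change of variables $y\mapsto-y$,
\[
\mathcal M_t^{G}f(m)=\frac{1}{\abs{G_t\cap\ZZ^{d}}}\sum_{y\in G_t\cap\ZZ^{d}}h(T^{m-y}x_{0})=\frac{1}{\abs{G_t\cap\ZZ^{d}}}\sum_{y\in G_t\cap\ZZ^{d}}h\big(T^{y}(T^{m}x_{0})\big)=\big(\mathcal A_t^{G}h\big)(T^{m}x_{0}).
\]
Consequently, writing $\Phi_{R}(z):=V_r\big(\mathcal A_t^{G}h(z):t\in Z_{R}\big)$, one has $V_r\big(\mathcal M_t^{G}f(m):t\in Z_{R}\big)=\Phi_{R}(T^{m}x_{0})$ for all such $m$.

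\emph{Summation, integration, and the volume ratio.} Next I would sum the $p$-th powers over $m\in Q_{L-R}\cap\ZZ^{d}$, enlarge to all of $\ZZ^{d}$, and invoke the hypothesis \eqref{eq:93}:
\[
\sum_{\norm{m}_{\infty}\le L-R}\Phi_{R}(T^{m}x_{0})^{p}\le\big\|V_r(\mathcal M_t^{G}f:t\in Z_{R})\big\|_{\ell^{p}(\ZZ^{d})}^{p}\le C_{p,r}^{p}\sum_{\norm{n}_{\infty}\le L}\abs{h(T^{n}x_{0})}^{p}.
\]
Integrating in $x_{0}$ over $X$ and using that each $T^{n}$ preserves $\mu$ (so that every summand on the left has $L^{p}(X)$-integral $\|\Phi_{R}\|_{L^{p}(X)}^{p}$ and every summand on the right has integral $\|h\|_{L^{p}(X)}^{p}$) gives $(2(L-R)+1)^{d}\|\Phi_{R}\|_{L^{p}(X)}^{p}\le C_{p,r}^{p}(2L+1)^{d}\|h\|_{L^{p}(X)}^{p}$. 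Dividing through and letting $L\to\infty$ with $R$ fixed, the ratio $\big((2L+1)/(2(L-R)+1)\big)^{d}\to 1$ for each fixed $d$, so $\|\Phi_{R}\|_{L^{p}(X)}\le C_{p,r}\|h\|_{L^{p}(X)}$. Finally, since $Z_{R}\uparrow Z$ and $V_r$ is monotone in the index set, $\Phi_{R}$ increases pointwise to $V_r(\mathcal A_t^{G}h(\cdot):t\in Z)$, and the monotone convergence theorem yields \eqref{eq:110} with the same constant $C_{p,r}$.

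\emph{Main point.} The argument is entirely soft: the only thing requiring care is the boundary effect produced by the averaging radius, which is exactly why one must send the box size $L$ to infinity \emph{before} letting the cutoff $R$ of the time set grow, so that for each fixed dimension $d$ the volume ratio $\big((2L+1)/(2(L-R)+1)\big)^{d}$ collapses to $1$ and the constant is transferred without any loss.
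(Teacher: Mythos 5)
Your proof is correct and runs the same Calder\'on transference argument as the paper: restrict $h$ along the orbits $\{T^{m}x_0:m\in\ZZ^d\}$, observe that (using the symmetry $G_t\cap\ZZ^d=-(G_t\cap\ZZ^d)$) $\mathcal M_t^G$ applied to the truncated orbit function recovers $\mathcal A_t^G h$ at interior lattice points, sum, integrate over $X$ via Tonelli and the measure-preservation of $T^m$, and let the box grow. The only (cosmetic) difference is the parametrization of the truncation: the paper truncates both the ambient cube and the time set by the coupled quantities $cR(1+\varepsilon/d)$ and $R\varepsilon/d$, sends $R\to\infty$ to land on the constant $C_{p,r}e^{\varepsilon}$, and then lets $\varepsilon\to 0^+$; you instead decouple the two cutoffs as $L$ and $R$, send $L\to\infty$ with $R$ fixed so the volume ratio $\big((2L+1)/(2(L-R)+1)\big)^d\to 1$ for each fixed $d$, and only then let $R\to\infty$ by monotone convergence, which avoids the $\varepsilon/d$ device and is arguably a cleaner presentation of the same idea.
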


\begin{proof}
For any convex symmetric body $G\subseteq\RR^d$ there is
a constant $c=c_G\in\NN$ such that $G_t\subseteq
Q_{ct}$ for
every $t>0$.  We fix $f\in L^p(X)$, $\varepsilon>0,$ and $R\in\NN$.
Let us define for every $x\in X$ the function
\[
\phi_x(y)=
\begin{cases}
f\big(T_1^{y_1}\circ\ldots\circ T_d^{y_d}x\big), & \text{ if } |y|_{\infty}\le cR(1+\varepsilon/d),\\
0, & \text{ otherwise}.
\end{cases}
\]
Observe that  
for every $z\in Q_{cR}$ and $t<R\varepsilon/d,$ we have
\begin{align}
\label{eq:111}
  \begin{split}
\mathcal A_t^Gf\big(T_1^{z_1}\circ\ldots\circ T_d^{z_d}x\big)&=
\frac{1}{|G_t\cap\ZZ^d|}\sum_{y\in G_t\cap\ZZ^d}f\big(T_1^{z_1-y_1}\circ\ldots\circ T_d^{z_d-y_d    }x\big)\\
&=\frac{1}{|G_t\cap\ZZ^d|}\sum_{y\in G_t\cap\ZZ^d}\phi_x(z-y)=\mathcal M_t^{G}\phi_x(z),
  \end{split}
\end{align}
since $|z-y|_{\infty}\le cR(1+\varepsilon/d)$, whenever $z\in Q_{cR}$ and $y\in G_t$.
 Hence, by \eqref{eq:93} and \eqref{eq:111} we get
\begin{align}
  \label{eq:108}
  \begin{split}
  \sum_{z\in Q_{cR}\cap\ZZ^d}\big|V_r\big(\mathcal A_t^Gf\big(T_1^{z_1}\circ&\ldots\circ T_d^{z_d}x\big): t\in Z\cap(0, R\varepsilon/d)\big)\big|^p\\
  &  \le     \sum_{z\in Q_{cR}\cap\ZZ^d}\big|V_r\big(\mathcal M_t^G\phi_x(z): t\in
  Z\cap(0, R\varepsilon/d)\big)\big|^p\\
  &\le \big\|V_r\big(\mathcal M_t^G\phi_x: t\in Z\big)\big\|_{\ell^p(\ZZ^d)}^p\\
  &\le C_{p, r}^p\|\phi_x\|_{\ell^p(\ZZ^d)}^p.
  \end{split}
\end{align}
Averaging \eqref{eq:108} over $x\in X$ we obtain
\begin{align}
  \label{eq:109}
  \begin{split}
  \sum_{z\in Q_{cR}\cap\ZZ^d}\big\|V_r\big(\mathcal A_t^Gf\big(T_1^{z_1}\circ&\ldots\circ T_d^{z_d}x\big): t\in Z\cap(0, R\varepsilon/d)\big)\big\|_{L^p(X)}^p\\
  &\le  C_{p, r}^p\sum_{z\in Q_{cR(1+\varepsilon/d)}\cap\ZZ^d}\big\|f\big(T_1^{z_1}\circ\ldots\circ T_d^{z_d}x\big)\big\|_{L^p(X)}^p,
  \end{split}
\end{align}
by definition of $\phi_x$. Inequality \eqref{eq:109} guarantees that
\begin{align*}
  (2cR)^d\cdot \big\|V_r\big(\mathcal A_t^Gf: t\in Z\cap(0, R\varepsilon/d)\big)\big\|_{L^p(X)}^p
  \le C_{p, r}^p\cdot(2cR(1+\varepsilon/d)+1)^d\cdot \|f\|_{L^p(X)}^p,
\end{align*}
since all $T_1^{z_1},\ldots,T_d^{z_d}$ preserve the measure $\mu$ on $X$. Dividing both sides by $(2cR)^d$ we obtain
that
\begin{equation*}
\begin{split}
  \big\|V_r\big(\mathcal A_t^Gf: t\in Z\cap(0, R\varepsilon/d)\big)\big\|_{L^p(X)}^p
  \le  C_{p, r}^p\bigg(\bigg(1+\frac{\varepsilon}{d}\bigg)+\frac{1}{2cR}\bigg)^d\|f\|_{L^p(X)}^p.
  \end{split}
  \end{equation*}
  Taking $R\to\infty$ and invoking the monotone convergence theorem we conclude that
  \begin{align*}
    \big\|V_r\big(\mathcal A_t^Gf: t\in Z\big)\big\|_{L^p(X)}^p
  \le  C_{p, r}^p\bigg(1+\frac{\varepsilon}{d}\bigg)^d\|f\|_{L^p(X)}^p\le C_{p, r}^p\:e^{\varepsilon}\|f\|_{L^p(X)}^p.
  \end{align*}
  Finally, letting $\varepsilon\to 0^+$ we obtain \eqref{eq:110} and complete the proof of the proposition.
\end{proof}

\appendix

\section{Lust--Piquard's counterexample for the Riesz transforms}
\label{sec:6}
For every $j\in\{1,\ldots,d\}$ let $\Delta_j$, and $\mathcal L$ be defined as
in Section \ref{sec:4}.  For  every $x\in\ZZ^d$ and $j\in\{1,\ldots,d\}$ we consider the discrete $j$-th Riesz transform given by
\[
\mathcal R_j f(x):=\frac12 \Delta_j \mathcal L^{-1/2}f(x).
  \]

In \cite{Lu_Piqu1} Lust--Piquard  proved the following theorem.
\begin{theorem}
	\label{thm:LuPi}
For every $p\in[2,\infty)$ there is $C_p>0$ independent of $d\in\NN$  such
that for every $f\in \ell^p(\ZZ^d)$ we have
\begin{align}
  \label{eq:51}
\bigg\|\Big(\sum_{j=1}^d |\mathcal R_jf|^2+|\mathcal R_j^* f|^2\Big)^{1/2}\bigg\|_{\ell^p}\le C_p\|f\|_{\ell^p}.  
\end{align}
\end{theorem}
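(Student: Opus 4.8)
The case $p=2$ is immediate from Plancherel's theorem. Since $\widehat{\mathcal R_jf}(\xi)=\dfrac{1-e^{2\pi i\xi_j}}{2|\xi|_{\st}}\hat f(\xi)$ and $\widehat{\mathcal R_j^*f}(\xi)=\dfrac{1-e^{-2\pi i\xi_j}}{2|\xi|_{\st}}\hat f(\xi)$, one has $\sum_{j=1}^d\bigl(|\widehat{\mathcal R_jf}(\xi)|^2+|\widehat{\mathcal R_j^*f}(\xi)|^2\bigr)=\dfrac{2}{|\xi|_{\st}^2}\sum_{j=1}^d\sin^2(\pi\xi_j)|\hat f(\xi)|^2=2|\hat f(\xi)|^2$, so the left-hand side of \eqref{eq:51} equals $\sqrt2\,\|f\|_{\ell^2}$. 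For $p\in(2,\infty)$ the two families $\mathcal R_j$ and $\mathcal R_j^*$ are treated by the same argument (replacing $\Delta_j$ by $\Delta_j^*$ changes nothing below), so it is enough to bound the vector Riesz transform $f\mapsto(\mathcal R_jf)_{j=1}^d$ from $\ell^p(\ZZ^d)$ into $\ell^p(\ZZ^d;\ell^2)$ with a constant independent of $d$. It is worth recording the intrinsic form of the statement: if $\Gamma(g)(x):=\tfrac18\sum_{k=1}^d\bigl(|\Delta_kg(x)|^2+|\Delta_k^*g(x)|^2\bigr)$ denotes the carr\'e du champ of $\mathcal L=\sum_k\mathcal L_k$, then $\sum_j\bigl(|\mathcal R_jf|^2+|\mathcal R_j^*f|^2\bigr)=2\,\Gamma(\mathcal L^{-1/2}f)$ pointwise, so that \eqref{eq:51} asserts precisely the $\ell^p$-boundedness, for $p\ge2$, of the Riesz transform attached to $\Gamma$. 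The absence of any dependence on $d$ then looks natural, because the argument is to be run purely at the level of the pair $(\mathcal L^{1/2},P_t)$ and of the form $\Gamma$, which do not refer to the dimension.

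The plan is to carry out Stein's Littlewood--Paley--Stein $g$-function argument for Riesz transforms (cf.\ \cite{SteinRiesz}, and \cite{Lu_Piqu1} for the discrete adaptation), based on the Poisson semigroup $P_t=e^{-t\mathcal L^{1/2}}$ from Section \ref{sec:4}, which by Lemma \ref{lem:b1} is a symmetric diffusion semigroup with the dimension-free estimates \eqref{eq:95} and \eqref{eq:96}. Since $\mathcal R_j$ commutes with $P_t$ and $\partial_tP_tf=-\mathcal L^{1/2}P_tf$, one has $\partial_tP_t(\mathcal R_jf)=-\tfrac12\Delta_jP_tf$. Inserting this into the polarization identity $\langle w-P_\infty w,\psi\rangle=4\int_0^\infty t\,\langle\partial_tP_tw,\partial_tP_t\psi\rangle\,\mathrm{d}t$ valid for symmetric diffusion semigroups (with $P_\infty$ the $t\to\infty$ projection, which kills $\mathcal R_jf$), and using the Cauchy--Schwarz inequality in $j$ and then in $t$, one gets for every $(\varphi_j)_{j=1}^d$ with $\bigl\|\bigl(\sum_j|\varphi_j|^2\bigr)^{1/2}\bigr\|_{\ell^{p'}}\le1$ that
\begin{equation*}
\Bigl|\sum_{j=1}^d\langle\mathcal R_jf,\varphi_j\rangle\Bigr|\le 2\sum_{x\in\ZZ^d}g_{\to}(f)(x)\,\widetilde g(\varphi)(x),\qquad g_{\to}(f)(x)^2:=\int_0^\infty t\sum_{k=1}^d|\Delta_kP_tf(x)|^2\,\mathrm{d}t,
\end{equation*}
where $\widetilde g(\varphi)(x)^2=\int_0^\infty t\sum_j|\partial_tP_t\varphi_j(x)|^2\,\mathrm{d}t=\sum_jg(\varphi_j)(x)^2$. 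Since $\ell^2$ is a Hilbert space, inequality \eqref{eq:96} extends verbatim to $\ell^2$-valued functions with the same, dimension-free, constant, so $\|\widetilde g(\varphi)\|_{\ell^{p'}}\lesssim_p\bigl\|\bigl(\sum_j|\varphi_j|^2\bigr)^{1/2}\bigr\|_{\ell^{p'}}\le1$; by H\"older's inequality and duality the whole theorem is thereby reduced to the single dimension-free estimate $\|g_{\to}(f)\|_{\ell^p}\lesssim_p\|f\|_{\ell^p}$ for $p\in[2,\infty)$.

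For this last estimate the crucial input is the discrete Bochner identity: since $\partial_t^2P_tf=\mathcal LP_tf$, a direct computation using $\Delta_k^*\Delta_ku(x)=2u(x)-u(x+e_k)-u(x-e_k)$ gives, for $u(x,t)=P_tf(x)$,
\begin{equation*}
(\partial_t^2-\mathcal L)\bigl(|u|^2\bigr)=2|\partial_tu|^2+\tfrac14\sum_{k=1}^d\bigl(|\Delta_ku|^2+|\Delta_k^*u|^2\bigr)\ \ge\ 0,
\end{equation*}
the lattice analogue of $\Delta_{x,t}|u|^2=2|\nabla u|^2$ for harmonic $u$. Integrating against $t\,\mathrm{d}t$, and using $\int_0^\infty t\,\partial_t^2\bigl(|P_tf(x)|^2\bigr)\mathrm{d}t=|f(x)|^2$ (from $P_tf\to0$) together with $\int_0^\infty t\,\mathcal L\bigl(|P_\cdot f|^2\bigr)\mathrm{d}t=\mathcal L(V_f)$, where $V_f(x):=\int_0^\infty t|P_tf(x)|^2\mathrm{d}t$, yields the pointwise bound $\tfrac14\,g_{\to}(f)(x)^2\le|f(x)|^2-\mathcal L(V_f)(x)$, and in particular $\mathcal L(V_f)\le|f|^2$ pointwise. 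It remains to control $\|\mathcal L(V_f)\|_{\ell^{p/2}}$ (here $p/2\ge1$ and $|f|^2\in\ell^{p/2}$): one pairs $\mathcal L(V_f)$ against $0\le\varphi$ with $\|\varphi\|_{\ell^{(p/2)'}}\le1$, applies the Bochner identity a second time inside the pairing to express $\langle\mathcal L(V_f),\varphi\rangle$ through $g(f)$, $g_{\to}(f)$ and the companion square function $g_{\leftarrow}(f)$ built from the $\Delta_k^*$ (with $\|g_{\leftarrow}(f)\|_{\ell^p}=\|g_{\to}(f)\|_{\ell^p}$ by symmetry), estimates the vertical term by \eqref{eq:96}, and finally absorbs the reappearing $\|g_{\to}(f)\|_{\ell^p}$ into the left-hand side. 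Making this absorption legitimate -- that is, producing a self-improving inequality with a contraction factor strictly less than $1$ -- is the main obstacle; it forces one to track signs rather than absolute values, to work first with the truncated square functions $\int_0^T$ so as to guarantee finiteness, and to feed in the positivity, self-adjointness and maximal bound \eqref{eq:95} of $P_t$, much in the spirit of how the contraction semigroup $H_t$ is absorbed in the proof of Proposition \ref{prop:2}. It is exactly here that the hypothesis $p\ge2$ is used essentially, and -- as the rest of this appendix shows by an explicit counterexample -- it is also exactly here that the scheme must collapse for $p<2$. Everything else (the Hilbert-valued version of \eqref{eq:96}, the subordination and polarization identities, the behaviour of $P_t$ as $t\to\infty$) is standard for symmetric diffusion semigroups, with all constants depending only on $p$, so once the bootstrap closes one obtains \eqref{eq:51} with a constant independent of $d$.
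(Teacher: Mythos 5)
The paper does not prove Theorem~\ref{thm:LuPi} at all --- it is simply quoted from Lust--Piquard \cite{Lu_Piqu1} as a known result, so there is no argument in the paper to compare yours against. Your attempt is therefore an independent reconstruction, and most of its scaffolding is sound: the $p=2$ Plancherel computation is correct (modulo a harmless sign typo, $\widehat{\Delta_j f}(\xi)=(1-e^{-2\pi i\xi_j})\hat f(\xi)$, not $1-e^{+2\pi i\xi_j}$), the Calder\'on-type polarization identity for the symmetric diffusion semigroup $P_t=e^{-t\mathcal L^{1/2}}$ is valid, the reduction by duality and the Hilbert-valued form of \eqref{eq:96} to the estimate $\|g_{\to}(f)\|_{\ell^p}\lesssim_p\|f\|_{\ell^p}$ for $p\ge 2$ is the standard Littlewood--Paley--Stein route, and your discrete Bochner identity $(\partial_t^2-\mathcal L)|u|^2=2|\partial_t u|^2+\tfrac14\sum_k(|\Delta_k u|^2+|\Delta_k^* u|^2)$ checks out.

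The gap is in the last step, and I think it is genuine rather than merely elided. After integrating the Bochner identity against $t\,\mathrm dt$ you obtain the single equality $2g(f)^2+\tfrac14 g_{\to}(f)^2+\tfrac14 g_{\leftarrow}(f)^2=|f|^2-\mathcal L V_f$, from which $\mathcal L V_f\le|f|^2$ but no lower bound on $\mathcal L V_f$ follows. ``Applying the Bochner identity a second time inside the pairing'' reproduces exactly this same equality, so when you then try to ``absorb the reappearing $\|g_{\to}(f)\|_{\ell^p}$ into the left-hand side'' you are simply transposing a term across an identity; no contraction factor strictly less than $1$ appears, and the bootstrap never gets off the ground. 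This is also not analogous to the absorption in Proposition~\ref{prop:2}: there one starts from an a priori finite operator norm $\|R_N^{\mathfrak t}\|_{\ell^p\to\ell^p(\ell^2)}$ and improves it by a genuine interpolation step with exponent $\theta(1-\rho)<1$; here the inequality being produced is an equality. To close the argument one needs an additional input: either a curvature-type inequality $\Gamma_2\ge 0$ for the discrete Laplacian so that Bakry's machinery applies, or a more delicate pairing $\int_0^\infty t\langle (\partial_t^2-\mathcal L)|P_tf|^2,\,P_t\varphi\rangle\,\mathrm dt$ exploited via integration by parts together with positivity and the maximal bound \eqref{eq:95} (this produces a term $2\int_0^\infty\langle|P_tf|^2,\partial_t P_t\varphi\rangle\,\mathrm dt$ of indeterminate sign which must itself be controlled --- this is exactly where the work is), or the probabilistic/transference route Lust--Piquard actually takes. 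As written, the outline identifies the obstacle correctly but does not surmount it, so the proof is incomplete.
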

\noindent  It was also proved in \cite{Lu_Piqu1} that the inequality \eqref{eq:51} for $p\in(1, 2)$ involves the bound which depends on the dimension. Our next result quantifies this dependence. 
\begin{proposition}
\label{lem:corLuPi}
	For every $q\in(1, 2)$ and $\varepsilon >0$ there is $C_{q,\varepsilon}>0$ independent of $d\in\NN$ and such that for every $f\in \ell^q(\ZZ^d)$ we have
	\begin{equation}
	\label{eq:5}
	\bigg\|\Big(\sum_{j=1}^d |\mathcal R_jf|^2+|\mathcal R_j^*
        f|^2\Big)^{1/2}\bigg\|_{\ell^q}
        \le C_{q,\varepsilon}d^{1/q-1/2+\varepsilon}\|f\|_{\ell^q}.
        \end{equation}
        The bound in \eqref{eq:5} is essentially sharp. Namely, there exists $C_q>0$ such that for all $d\in \NN$ we have
		\begin{equation}
		\label{eq:6a}
		\sup_{0<\|f\|_{\ell^q}\le1}\bigg\|\Big(\sum_{j=1}^d
                |\mathcal R_jf|^2\Big)^{1/2}\bigg\|_{\ell^q} \|f\|_{\ell^q}^{-1}\ge C_qd^{1/q-1/2}.
                \end{equation}
\end{proposition}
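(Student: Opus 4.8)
The plan is to obtain the upper bound \eqref{eq:5} by interpolating the dimension‑free $\ell^2$ estimate against an endpoint estimate near $\ell^1$ carrying a loss of only $d^{1/2+\eps}$, and to obtain the sharpness \eqref{eq:6a} by testing the operator against the Dirac delta $\delta_0$. Write $\mathfrak r_j(\xi)=\tfrac12(1-e^{-2\pi i\xi_j})|\xi|_{\st}^{-1}$ for the multiplier of $\mathcal R_j$; since $|\tfrac12(1-e^{-2\pi i\xi_j})|^2=\sin^2(\pi\xi_j)$ one has $\sum_{j=1}^d\big(|\mathfrak r_j(\xi)|^2+|\overline{\mathfrak r_j(\xi)}|^2\big)=2|\xi|_{\st}^{-2}\sum_{j=1}^d\sin^2(\pi\xi_j)=2$, and hence by Plancherel's theorem
\[
\Big\|\Big(\sum_{j=1}^d|\mathcal R_jf|^2+|\mathcal R_j^*f|^2\Big)^{1/2}\Big\|_{\ell^2}=\sqrt2\,\|f\|_{\ell^2},
\]
with a constant independent of $d$. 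Granting the endpoint bound
\[
\Big\|\Big(\sum_{j=1}^d|\mathcal R_jf|^2+|\mathcal R_j^*f|^2\Big)^{1/2}\Big\|_{\ell^{q_0}}\le C_{q_0,\eps}\,d^{1/2+\eps}\|f\|_{\ell^{q_0}}
\]
for $q_0\in(1,2)$ (or the corresponding weak‑type $(1,1)$ inequality with constant $O_\eps(d^{1/2+\eps})$), the Riesz--Thorin/Marcinkiewicz interpolation theorem applied together with the $\ell^2$ estimate yields \eqref{eq:5}: letting $q_0\to1^+$ the interpolation parameter $\theta=\tfrac{1/q-1/2}{1/q_0-1/2}$ tends to $2(1/q-1/2)$, so the resulting exponent $\theta(1/2+\eps)$ tends to $(1/q-1/2)(1+2\eps)$, which is $\le 1/q-1/2+\eps'$ after relabelling $\eps$.

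To prove the endpoint bound I would apply Khintchine's inequality on each fibre: for independent Rademacher signs $(\sigma_j)_{j=1}^d$, $(\sigma_j')_{j=1}^d$,
\[
\Big(\sum_{j=1}^d|\mathcal R_jf(x)|^2+|\mathcal R_j^*f(x)|^2\Big)^{1/2}\simeq_{q_0}\Big(\EE_\omega\Big|\sum_{j=1}^d\big(\sigma_j(\omega)\mathcal R_jf(x)+\sigma_j'(\omega)\mathcal R_j^*f(x)\big)\Big|^{q_0}\Big)^{1/q_0},
\]
which reduces matters to a uniform norm bound $\sup_\omega\|T_\omega\|_{\ell^{q_0}\to\ell^{q_0}}\lesssim_{q_0,\eps}d^{1/2+\eps}$ for the single multiplier operators $T_\omega=\sum_{j=1}^d(\sigma_j\mathcal R_j+\sigma_j'\mathcal R_j^*)$. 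Their symbols $\mathfrak m_\omega(\xi)=|\xi|_{\st}^{-1}\cdot\tfrac12\sum_{j=1}^d\big(\sigma_j(1-e^{-2\pi i\xi_j})+\sigma_j'(1-e^{2\pi i\xi_j})\big)$ satisfy $|\mathfrak m_\omega(\xi)|\le 2|\xi|_{\st}^{-1}\sum_{j=1}^d|\sin(\pi\xi_j)|\le 2\sqrt d$ by the Cauchy--Schwarz inequality and $\sum_{j=1}^d\sin^2(\pi\xi_j)=|\xi|_{\st}^2$; one then has to upgrade this $L^\infty$‑bound to an $\ell^{q_0}$‑bound with only a $d^\eps$ loss by means of a discrete Mikhlin/Littlewood--Paley‑type multiplier theorem, exploiting that the numerator of $\mathfrak m_\omega$ is a sum of functions of one variable and that $|\xi|_{\st}$ is essentially frozen at $\sqrt{d/2}$ off a set of small measure.

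For the sharpness \eqref{eq:6a} I would take $f=\delta_0$, so that $\|\delta_0\|_{\ell^q}=1$ and $\widehat{\mathcal R_k\delta_0}=\mathfrak r_k$. Using that $|\xi|_{\st}$ is even in each coordinate the imaginary part drops out, and one gets the exact identity
\[
|\mathcal R_k\delta_0(-e_k)|=\Big|\int_{\TT^d}\frac{\tfrac12(1-e^{-2\pi i\xi_k})e^{2\pi i\xi_k}}{|\xi|_{\st}}\,\dif\xi\Big|=\int_{\TT^d}\frac{\sin^2(\pi\xi_k)}{|\xi|_{\st}}\,\dif\xi=\frac1d\int_{\TT^d}|\xi|_{\st}\,\dif\xi,
\]
the last step by summing over $k$, using $\sum_{k=1}^d\sin^2(\pi\xi_k)=|\xi|_{\st}^2$ and coordinate symmetry. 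Now $\int_{\TT^d}|\xi|_{\st}\,\dif\xi=\EE\big[(\sum_{k=1}^d X_k)^{1/2}\big]$ with $X_k=\sin^2(\pi\xi_k)$ independent, $\EE X_k=1/2$, $\mathrm{Var}(X_k)=1/8$, so Chebyshev's inequality gives $\sum_{k=1}^dX_k\ge d/4$ on a set of measure $\ge1/2$ once $d$ is large, whence $\int_{\TT^d}|\xi|_{\st}\,\dif\xi\gtrsim\sqrt d$ and $|\mathcal R_k\delta_0(-e_k)|\gtrsim d^{-1/2}$ for every $k$. As the points $-e_1,\dots,-e_d$ are distinct,
\[
\Big\|\Big(\sum_{j=1}^d|\mathcal R_j\delta_0|^2\Big)^{1/2}\Big\|_{\ell^q}^q\ge\sum_{k=1}^d\Big(\sum_{j=1}^d|\mathcal R_j\delta_0(-e_k)|^2\Big)^{q/2}\ge\sum_{k=1}^d|\mathcal R_k\delta_0(-e_k)|^q\gtrsim d\cdot d^{-q/2},
\]
which gives \eqref{eq:6a} after taking $q$‑th roots (small values of $d$ being absorbed into $C_q$).

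The main obstacle is the endpoint estimate for \eqref{eq:5}: since the square function over the $d$ discrete Riesz transforms genuinely fails to be dimension‑free for $q<2$ — precisely Lust--Piquard's phenomenon, quantified by \eqref{eq:6a} — a loss is unavoidable, and the delicate point is to show that near $\ell^1$ this loss is no worse than $d^{1/2+\eps}$ rather than the trivial $d^1$ coming from summing the $d$ individually $\ell^{q_0}$‑bounded Riesz transforms; this is what forces the dimension‑tracking multiplier analysis of the randomized operators $T_\omega$ above. The sharpness direction, by contrast, is elementary once one notices the closed form for $\mathcal R_k\delta_0(-e_k)$ and the concentration of $|\xi|_{\st}$ around $\sqrt{d/2}$.
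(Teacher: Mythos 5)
\textbf{Upper bound \eqref{eq:5}.} Your interpolation scheme is fine, and your bookkeeping of the exponents (letting $q_0\to1^+$ to turn a $d^{1/2}$ loss near $\ell^1$ into $d^{1/q-1/2+\eps}$ at $\ell^q$) matches what is needed. But the endpoint estimate itself is left as a conjecture: you reduce to a uniform bound $\sup_\omega\|T_\omega\|_{\ell^{q_0}\to\ell^{q_0}}\lesssim d^{1/2+\eps}$ for the Rademacher-randomized multipliers $T_\omega$, and then appeal to ``a discrete Mikhlin/Littlewood--Paley-type multiplier theorem'' with only $d^\eps$ loss. No such dimension-free multiplier theorem is established or quoted, and proving one is at least as hard as the proposition itself; indeed, as you note, this is precisely where the dimensional obstruction lives. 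So this is a genuine gap, not merely an omitted routine step. The paper sidesteps the problem entirely with a short duality argument: after Khintchine, it writes
\[
\Big\|\sum_{j=1}^d \eps_j\mathcal R_jf\Big\|_{\ell^{q}}
=\sup_{\|g\|_{\ell^{q'}}\le1}\Big|\sum_{x}f(x)\sum_{j}\overline{\eps_j\mathcal R_j^*g(x)}\Big|
\le\|f\|_{\ell^{q}}\sup_{\|g\|_{\ell^{q'}}\le1}\Big\|\sum_j\eps_j\mathcal R_j^*g\Big\|_{\ell^{q'}},
\]
then uses Cauchy--Schwarz in $j$ to dominate the Rademacher sum by $d^{1/2}$ times the square function, and finally invokes the dimension-free Lust--Piquard bound \eqref{eq:51} on the \emph{dual} exponent $q'>2$. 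This yields the $C_{q'}d^{1/2}\|f\|_{\ell^q}$ endpoint cleanly for every $q\in(1,2)$, with no multiplier analysis; interpolation with the $\ell^2$ identity then gives \eqref{eq:5}. That duality step is the idea your proposal is missing.

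\textbf{Sharpness \eqref{eq:6a}.} Your argument here is correct and complete, and it is genuinely different from the paper's. The paper tests the square function against $f=\mathcal L^{1/2}\delta_0$, which produces $\mathcal R_jf=\tfrac12\Delta_j\delta_0$ (a nice local object), and then needs the auxiliary inequality $\|\mathcal L^{1/2}G\|_{\ell^q}\le 2\|\mathcal LG\|_{\ell^q}^{1/2}\|G\|_{\ell^q}^{1/2}$, proved via the Taylor expansion of $e^{-t\mathcal L^{1/2}}$, to control $\|f\|_{\ell^q}\lesssim d^{1/2}$. You instead test directly against $f=\delta_0$ and compute a single Fourier coefficient $\mathcal R_k\delta_0(-e_k)=-\int_{\TT^d}\sin^2(\pi\xi_k)|\xi|_{\st}^{-1}\,\dif\xi$, then use coordinate symmetry, $\sum_k\sin^2(\pi\xi_k)=|\xi|_{\st}^2$, and a Chebyshev concentration bound to see $|\mathcal R_k\delta_0(-e_k)|\gtrsim d^{-1/2}$, after which summing over the disjoint points $-e_1,\dots,-e_d$ immediately gives $d^{1/q-1/2}$. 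This avoids the semigroup manipulation with $\mathcal L^{1/2}$ entirely and is, to my mind, slightly more elementary; the paper's variant is chosen to parallel Lust--Piquard's original counterexample. Both are valid; yours is a clean alternative.
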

\begin{proof}
We first demonstrate \eqref{eq:5}. By Khintchine's inequality and Fubini's theorem we have
\begin{equation*}
	\bigg\|\Big(\sum_{j=1}^d |\mathcal R_jf|^2+|\mathcal R_j^*
        f|^2\Big)^{1/2}\bigg\|_{\ell^q}^q \simeq_q \EE
        \bigg\|\sum_{j=1}^d \eps_j \mathcal R_jf \bigg\|_{\ell^q}^q+ \EE
        \bigg\|\sum_{j=1}^d \eps_j \mathcal R_j^*f \bigg\|_{\ell^q}^q,
\end{equation*}
where $\eps_j\in\{-1,1\}$ are independent and identically distributed
Rademacher variables.  We  note that
\begin{align*}
  \Big\|\sum_{j=1}^d \eps_j\mathcal R_jf
  \Big\|_{\ell^q}
  &=\sup_{\|g\|_{\ell^{q'}}\le 1}\Big|\sum_{x\in\ZZ^d}f(x)\sum_{j=1}^d \overline{\eps_j\mathcal R_j^*g(x)}\Big| && \text{by duality}\\
  &\le\|f\|_{\ell^q}\sup_{\|g\|_{\ell^{q'}}\le 1}\Big\|\sum_{j=1}^d \eps_j\mathcal R_j^*g
    \Big\|_{\ell^{q'}}&&\text{by H\"older's inequality}\\
  &\le d^{1/2}\|f\|_{\ell^q}\sup_{\|g\|_{\ell^{q'}}\le 1}\bigg\|\Big(\sum_{j=1}^d |\mathcal
    R_j^*g|^2\Big)^{1/2}\bigg\|_{\ell^{q'}}&&\text{by Cauchy--Schwarz inequality}\\
  &\le C_{q'}d^{1/2}\|f\|_{\ell^q}&&\text{by Theorem \ref{thm:LuPi}}.
\end{align*}
The same inequality holds with $\mathcal R_j^*$ in place of $\mathcal R_j$ and we conclude that
\[
\bigg\|\Big(\sum_{j=1}^d |\mathcal R_jf|^2+|\mathcal R_j^* f|^2\Big)^{1/2}\bigg\|_{\ell^q}\le C_{q'}d^{1/2}\|f\|_{\ell^q}.
\]
Interpolating the last bound for $q>1$ (which is close to $1$) with \eqref{eq:51} (for $p=2$) we obtain \eqref{eq:5} with $\varepsilon$ loss for arbitrary small $\varepsilon>0$.

We now demonstrate \eqref{eq:6a}. Here we follow \cite[Proposition 2.9]{Lu_Piqu1} but we are keen on keeping the dependence on $q$ and $d.$ Let $g=\delta_0$ be the Dirac delta at zero in $\ZZ $
and consider
\[
G(x)=\prod_{k=1}^d g(x_k) \quad \text{for} \quad x\in \ZZ^d.
\]
Then
\[
\|g\|_{\ell^q(\ZZ)}=\|G\|_{\ell^q(\ZZ^d)}=1.
\]
Let $\Delta$ denote the discrete derivative on $\ZZ$, i.e. 
\[
\Delta g(y)=g(y)-g(y+1)\quad \text{for} \quad y\in \ZZ.
\]
Then for every $j\in\{1, \ldots, d\}$, with $\Delta_j$ as in Section \ref{sec:4}, we have
\[
\Delta_j G(x)=\Delta g(x_j)\prod_{k\neq j}g(x_k).
\]
For $j\in\{1, \ldots, d\}$ we define 
$$E_j=\{0\}\times\ldots \times \{0\}^c\times \ldots\times \{0\},$$
where $\{0\}^c$ occurs in the $j$-th factor. Then the sets $E_j$ are disjoint. We note that
\begin{align}
 \nonumber \bigg\|\Big(\sum_{j=1}^d |\Delta_j G|^2\Big)^{1/2}\bigg\|_{\ell^q(\ZZ^d)}
   &\ge \bigg\|\Big(\sum_{j=1}^d \ind{E_j}\Big)^{1/2}\Big(\sum_{j=1}^d |\Delta_j G|^2\Big)^{1/2}\bigg\|_{\ell^q(\ZZ^d)} && \text{since $\ind{\ZZ^d}\ge\sum_{j=1}^d \ind{E_j}$} \\
\nonumber  &\ge  \Big\|\sum_{j=1}^d \ind{E_j} |\Delta_j G|\Big\|_{\ell^q(\ZZ^d)}&&\text{by Cauchy--Schwarz inequality}\\
         \nonumber                  &=\bigg(\sum_{j=1}^d\,\Big\|\big(\ind{\{0\}^c}\Delta g(x_j)\big)\prod_{k\neq j}g(x_k)\Big\|_{\ell^q(\ZZ^d)}^q\bigg)^{1/q}
  &&\text{by disjointness of $E_j$'s}\\
  &=d^{1/q}\|\ind{\{0\}^c}\Delta g\|_{\ell^q(\ZZ)}.\label{eq:8}
\end{align}

We will use the following inequality
\begin{align}
  \label{eq:53}
  \|\mathcal L^{1/2}G\|_{\ell^q(\ZZ^d)}\le 2 \|\mathcal LG\|_{\ell^q(\ZZ^d)}^{1/2} \|G\|_{\ell^q(\ZZ^d)}^{1/2}.
\end{align}
Indeed, by the Taylor formula with integral reminder we have
\[
e^{-t\mathcal L^{1/2}}=I-t\mathcal
L^{1/2}+\int_0^{t}(t-u)\,e^{-u\mathcal L^{1/2}}\mathcal L\dif u \quad \text{for} \quad t>0.
\]
This implies that
\[
\|\mathcal L^{1/2}G\|_{\ell^q(\ZZ^d)}\leq t^{-1}\|G-e^{-t\mathcal
  L^{1/2}}G\|_{\ell^q(\ZZ^d)}+t^{-1}\int_0^{t}(t-u)\,\|e^{-u\mathcal
  L^{1/2}}\mathcal LG\|_{\ell^q(\ZZ^d)}\dif u,
\]
which together with the contractivity of $e^{-u\mathcal L^{1/2}}$ on $\ell^q(\ZZ^d)$ (see \eqref{eq:contrLa}) gives
$$\|\mathcal L^{1/2}G\|_{\ell^q(\ZZ^d)}\leq \frac2t\|G\|_{\ell^q(\ZZ^d)}+\frac
t2 \|\mathcal LG\|_{\ell^q(\ZZ^d)}.$$
Optimizing over $t>0$ we obtain  \eqref{eq:53}. 

We now observe 
\[
\mathcal L G(x)=\frac14\sum_{j=1}^d \Delta_j \Delta_j^* G(x)=\frac14 \sum_{j=1}^d \Delta\Delta^*g(x_j)\prod_{k\neq j}g(x_k)
\]
and consequently obtain
\[
\|\mathcal LG\|_{\ell^q(\ZZ^d)}\leq \frac d{4}\|\Delta \Delta^* g\|_{\ell^q(\ZZ)},
\]
which combined with \eqref{eq:53}  implies
\begin{equation}
\label{eq:10}
\|\mathcal L^{1/2}G\|_{\ell^q(\ZZ^d)}\le 2 \|\mathcal LG\|_{\ell^q(\ZZ^d)}^{1/2} \|G\|_{\ell^q(\ZZ^d)}^{1/2} \le  d^{1/2}\|\Delta \Delta^* g\|_{\ell^q(\ZZ)}^{1/2}.\end{equation}

Combining \eqref{eq:10} with \eqref{eq:8} we see that
\begin{equation}
\label{eq:9a}
\bigg\|\Big(\sum_{j=1}^d |\Delta_j
G|^2\Big)^{1/2}\bigg\|_{\ell^q(\ZZ^d)}\|\mathcal L^{1/2}G\|_{\ell^q(\ZZ^d)}^{-1}\ge B d^{1/q-1/2},
\end{equation}
where
\[
B:=\frac{\|\ind{\{0\}^c}\Delta g\|_{\ell^q(\ZZ)}}{ \|\Delta
  \Delta^* g\|_{\ell^q(\ZZ)}^{1/2}}.
\]
Since
$\Delta g(-1)=-1$ we see that $B\neq 0$. We remark that the non-local nature of the derivative $\Delta$ plays an essential role here.

To complete the proof of \eqref{eq:6a} we assume for a contradiction that for all $C_q>0$ there is 
$d\in \NN$ such that for all $f\in \ell^q(\ZZ^d)$ we have
\begin{align*}
  \bigg\|\Big(\sum_{j=1}^d |\mathcal R_jf|^2\Big)^{1/2}\bigg\|_{\ell^q(\ZZ^d)}\|f\|_{\ell^q(\ZZ^d)}^{-1}
  \le C_q\, d^{1/q-1/2}.
\end{align*}
But this contradicts \eqref{eq:9a} 
by taking $C_q=B/4$ and $f=\mathcal L^{1/2}G$, since $\mathcal R_j f=\frac12\Delta_j G$. \end{proof}

\end{document}